\documentclass[pdflatex,sn-mathphys-num]{sn-jnl}

\usepackage{graphicx}%
\usepackage{multirow}%
\usepackage{makecell}%
\usepackage{subcaption}%
\usepackage{amsmath,amssymb,amsfonts}%
\usepackage{amsthm}%
\usepackage{mathrsfs}%
\usepackage[title]{appendix}%
\usepackage{xcolor}%
\usepackage{textcomp}%
\usepackage{manyfoot}%
\usepackage{booktabs}%
\usepackage{algorithm}%
\usepackage{algorithmicx}%
\usepackage{algpseudocode}%
\usepackage{listings}%
\usepackage{tabularx}%
\newcolumntype{C}{>{\centering\arraybackslash}X}%

\theoremstyle{thmstyleone}%
\newtheorem{theorem}{Theorem}
\theoremstyle{thmstyletwo}%
\newtheorem{example}{Example}%
\newtheorem{remark}{Remark}%

\theoremstyle{thmstylethree}%
\newtheorem{definition}{Definition}%

\begin{document}

\def\DOI{?}
\def\JCMvol{}
\def\JCMno{}
\def\JCMyear{}
\def\JCMreceived{?}
\def\JCMrevised{?}
\def\JCMaccepted{?}
\def\argmin{\mbox{argmin}}
\setcounter{page}{1}
\newtheorem{lemma}{Lemma}[section]
\newtheorem{dingyi}{Definition}[section]
\newtheorem{exa}{Example}[section]
\newtheorem{coro}{Corollary}[section]
\newtheorem{rem}{Remark}[section]
\newcommand{\aaa}{\left \langle}
\newcommand{\bbb}{\right \rangle}
\newcommand{\ccc}{\left (}
\newcommand{\ddd}{\right )}
\newcommand{\eee}{\Big \{}
\newcommand{\fff}{\Big \}}
\newcommand{\C}{\mbox{$\mathbb{C}$}}
\newcommand{\R}{\mbox{$\mathbb{R}$}}
\newcommand{\SP}{\mbox{$\mathcal{SP}$}}
\newcommand{\RR}{\mbox{$\mathcal{R}$}}
\newcommand{\PP}{\mbox{$\mathbf{P}$}}
\newcommand{\T}{\mbox{$\mathbf{T}$}}
\newcommand{\0}{\mbox{${\bf 0}$}}

\newenvironment{breakablealgorithm}
  {
   \begin{center}
     \refstepcounter{algorithm}
     \hrule height.8pt depth0pt \kern2pt
     \renewcommand{\caption}[2][\relax]{
       {\raggedright\textbf{Algorithm~\thealgorithm} ##2\par}%
       \ifx\relax##1\relax 
         \addcontentsline{loa}{algorithm}{\protect\numberline{\thealgorithm}##2}%
       \else 
         \addcontentsline{loa}{algorithm}{\protect\numberline{\thealgorithm}##1}%
       \fi
       \kern2pt\hrule\kern2pt
     }
  }{
     \kern2pt\hrule\relax
   \end{center}
  }
\def\argmin{\mbox{argmin}}
\def\QEDclosed{\mbox{\rule[0pt]{1.3ex}{1.3ex}}}
\def\QEDopen{{\setlength{\fboxsep}{0pt}\setlength{\fboxrule}{0.2pt}\fbox{\rule[0pt]{0pt}{1.3ex}\rule[0pt]{1.3ex}{0pt}}}}
\def\QED{\QEDclosed}
\def\proof{\noindent{\bf Proof}: }
\def\endproof{\hspace*{\fill}~\QED\par\endtrivlist\unskip}

\title[A Gaussian Process Regression-Enhanced Two-Sweep Parameterized Power Method for Computing Extreme Eigenpairs]{A Gaussian Process Regression-Enhanced Two-Sweep Parameterized Power Method for Computing Extreme Eigenpairs}


\author[2]{\fnm{Tiantian} \sur{Sun}}

\author*[1]{\fnm{Xiaofei} \sur{Peng}}\email{pxf6628@m.scnu.edu.cn}

\author[2]{\fnm{Juan} \sur{Zhang}}

\author[3]{\fnm{Seakweng} \sur{Vong}}

\affil*[1]{\orgdiv{School of Mathematical Sciences},
  \orgname{South China Normal University},
  \orgaddress{\city{Guangzhou}, \postcode{510631}, \country{PR China}}}

\affil[2]{\orgdiv{Hunan Key Laboratory for Computation and Simulation in Science and Engineering; Key Laboratory of Intelligent Computing and Information Processing of Ministry of Education; School of Mathematics and Computational Science},
  \orgname{Xiangtan University},
  \orgaddress{\city{Xiangtan}, \state{Hunan}, \postcode{411105}, \country{PR China}}}

\affil[3]{\orgdiv{Department of Mathematics},
  \orgname{University of Macau},
  \orgaddress{\city{Macau}, \country{PR China}}}


\abstract{
For computing the dominant eigenvalue and the corresponding eigenvector of a real and symmetric matrix, inspired by the parameterized power method and multistep iteration, we propose a two-sweep parameterized power method, and establish its convergence theory. To further improve the convergence speed of the new method, we use Gaussian process regression (GPR) to predict the quasi-optimal parameter and give a version of the proposed method based on GPR. Numerical experiments show that the two-sweep parameterized power method with quasi-optimal parameters outperforms some existing methods in terms of computing efficiency.
}

\keywords{Eigenvalue problem, Power method, Multistep iteration, Gaussian process regression}


\pacs[MSC Classification]{15A18, 65F10, 65F15}

\maketitle

\section{Introduction}\label{sec:1}
Given a real and symmetric matrix $A\in \mathbb{R}^{n\times n}$,  the associated eigenvalue problem is defined by
\begin{equation}
	\label{Aq=lambda q}
	A q = \lambda q.
\end{equation}
Let $\{ ( \lambda_i, q_i)\}_{i=1}^n$ be the eigenpairs of the matrix $A$, and  the eigenvalues $ \{ \lambda_i \}_{i=1}^n $ be arranged in descending order with respect to their absolute values such that
$$
\vert \lambda_1 \vert > \vert \lambda_2 \vert \ge \vert \lambda_3 \vert \ge \cdots \ge \vert \lambda_n \vert.
$$
Assume that the eigenvalue $\lambda_1$ is simple and all eigenvectors $q_i$, $i = 1, 2, \cdots , n$, are real and orthonormal, that is, $q_i^T q_i = 1$ and $q_i^T q_j = 0 ~ (i \neq j)$ for $i,j = 1,2,\dots,n$, with $(\cdot)^T$ denoting the transpose of either a vector or a matrix.

The computation of dominant eigenvalues has a wide range of applications such as web-search
called PageRank \cite{P-2005, P-2006},
statistical analysis, principal component analysis \cite{SPowerIteration_pca2018,PCA2014}  and data clustering \cite{ PCA2014, PIC2010, KernalClustering2019}.

As one of the classic iterations to address the mentioned eigenvalue problem, the power method (PM) \cite{PM1998, PM1929} has been proposed and developed.
Note that  the convergence of the PM depends upon the ratio  $\vert \frac{\lambda_2}{\lambda_1} \vert$. It  implies that  the PM converges slowly if there is an eigenvalue close to the dominant eigenvalue $\lambda_1 $ in magnitude \cite{MC2013}.
To  improve the convergence of the PM, lots of approaches have been proposed; for example, Chen and Chen \cite{CMF2022} presented two localized estimation techniques,
Xu et al. \cite{SPowerIteration_pca2018} constructed the  accelerated power method (APM) with momentum scheme for positive semidefinite matrices,
Nigam and Pollock \cite{SEM2022} presented an accelerated version of the power method based on a single-step extrapolation for semidefinite problem.
Coordinate descent methods offer another approach for computing extreme eigenpairs of symmetric matrices. Bai and Chen~\cite{ARCD2025} proposed admissibly randomized coordinate descent (ARCD) methods, and Bai~\cite{SCD2025} subsequently developed a steepest coordinate descent (SCD) method. These methods minimize the Rayleigh quotient along a selected coordinate direction and access the corresponding column of the matrix at each iteration. Their column-access structure is particularly suitable for large-scale problems where storing the entire matrix in memory is impractical.

Within the power-method framework, Bai et al. \cite{PPM2021} proposed the parameterized power method (PPM) by combining the non-stationary Richardson iterative method with the PM.
With the notation $\Vert \cdot\Vert_2$ that is the Euclidean norm,
the PPM is described in Method \ref{PPM}.

\begin{algorithm}
	\floatname{algorithm}{Method}
	\caption{Parameterized Power Method (PPM) \cite{PPM2021}}
	\label{PPM}
	\begin{algorithmic}[1]
		\Require $A$, $t$, $x_0$ with $\Vert x_0 \Vert_2 = 1$, and $\{ \omega_k\}_{k=0}^{t-1}$.
		\Ensure{ $x_{t}$, $\mu_{t}$ .}
		\State $\mu_0 = x_0^T A x_0$;
		\For {$k = 0,\dots,t-1$}
		\State{$y_{k+1} = (1-\omega_k\mu_k)x_k + \omega_k A x_{k}$; }\label{PPM:step_y}
		\State{$x_{k+1}= y_{k+1}/\Vert y_{k+1} \Vert_2 $; }
		\State{$\mu_{k+1}= (x_{k+1})^T A x_{k+1} $; }
		\EndFor
	\end{algorithmic}
\end{algorithm}

Method \ref{PPM} generates an adaptive parameter sequence $\{\omega_k\}_{k=0}^{t-1}$ and a sequence $\{\mu_k\}_{k = 0}^{t-1}$  to approximate the dominant eigenvalue $\lambda_1$ in magnitude.
When set $\omega_k = \frac{1}{\mu_k}$, the PPM reduces to the PM.
Clearly, the convergence of the PPM  depends on the selection of parameter $\omega_k$.
From \cite{PPM2021}, the parameter is set as $\omega_k = \frac{n-1}{n\mu_k-\text{tr}(A)} $ based on  a uniform distribution of eigenvalues.
With the above choice of $\omega_k$, the PPM  is more competitive than the PM.
However, for non-uniform eigenvalue distributions, how to determine the optimal parameter is still a challenging problem.

To further  improve the convergence of the PPM, inspired by the relaxation two-sweep modulus-based matrix splitting iteration method~\cite{RTSM2019},  we propose a two-sweep parameterized power method (TPPM). Moreover, we use Gaussian process regression (GPR) \cite{GADI2022} to determine the quasi-optimal parameters in the TPPM .


The main contributions of our work are summarized as follows:
\begin{itemize}
	\item We propose a two-sweep parameterized power method (TPPM) and establish its convergence.  The new method includes both the PM and the APM as special cases.  By setting the different parameters, the proposed methods can yield a series of two-sweep power methods.
	\item By introducing a modified parameter, we extend the strategy of selecting parameter in  \cite{PPM2021} to a general case of eigenvalue distribution. Furthermore, we use GPR to predict the above parameters. Consequently, the TPPM version based on GPR is given. Moreover, numerical results reveal that the TPPM based on GPR outperforms both the PM and the PPM.
\end{itemize}

The organization of this paper is as follows. In Section \ref{sec:2}, we construct the paradigm of the TPPM. In Section \ref{sec:3}, we establish the convergence theorems of the TPPM. In Section \ref{sec:TPPMGPR}, we give a version of the TPPM based on GPR, which is used to predict the  quasi-optimal parameters. The numerical results are reported in Section \ref{sec:4}. Some concluding remarks are given in the final section.

\section{Two-sweep parameterized power method}\label{sec:2}

Based on the assumption $\omega_k \neq 0$ in \cite{PPM2021},
we can rewrite step \ref{PPM:step_y} of Method \ref{PPM} as follows:
\begin{equation}
	y_{k+1} = \frac{1-\omega_k \mu_k}{\omega_k} x_k+Ax_k.
\end{equation}

Motivated by the iteration format in \cite{RTSM2019}, we give the  two-sweep parameterized power method (TPPM) as follows
\begin{equation}
	\label{eq:TPPM01}
	y_{k+1} = \frac{1-\omega_{k-1} \mu_{k-1}}{\omega_{k-1}} x_{k-1}+Ax_k. \end{equation}

Let $\beta_{k-1} = \frac{1-\omega_{k-1} \mu_{k-1}}{\omega_{k-1}}$,   (\ref{eq:TPPM01}) can be rewritten as
$$y_{k+1} = \beta_{k-1}x_{k-1}+Ax_k. $$
Thus the TPPM is obtained as follows.


\begin{algorithm}[h!]
	\floatname{algorithm}{Method}
	\caption{Two-sweep Parameterized Power Method (TPPM)}
	\label{TPPM}
	\begin{algorithmic}[1]
		\Require{$A$, $k_{\max}$, $\varepsilon$, $x_0$, $x_1$ with
		$\Vert x_0\Vert_2=\Vert x_1\Vert_2=1$, and
		$\{\beta_k\}_{k=0}^{k_{\max}-1}$.}
		\Ensure{$x_k$ and $\mu_k$.}

		\State{$\mu_0=x_0^TAx_0$,~$r_0=Ax_0-\mu_0x_0$;}

		\State{$\mu_1=x_1^TAx_1$,~$r_1=Ax_1-\mu_1x_1$;}
		\State{$\mathrm{RRES}_1=
		\Vert r_1\Vert_2/\Vert r_0\Vert_2$;}
		\State{$k=1$;}

		\While{$k\leq k_{\max}$ \textbf{and}
		$\mathrm{RRES}_k>\varepsilon$}
			\State{$y_{k+1}
			=\beta_{k-1}x_{k-1}+Ax_k$;}
			\State{$x_{k+1}
			=y_{k+1}/\Vert y_{k+1}\Vert_2$;}
			\State{$\mu_{k+1}
			=x_{k+1}^TAx_{k+1}$;}
			\State{$r_{k+1}
			=Ax_{k+1}-\mu_{k+1}x_{k+1}$;}
			\State{$\mathrm{RRES}_{k+1}
			=\Vert r_{k+1}\Vert_2/\Vert r_0\Vert_2$;}
			\State{$k=k+1$;}
		\EndWhile
	\end{algorithmic}
\end{algorithm}

\begin{remark} Different choices of the parameter $\beta_k$ in the TPPM give rise to a family of iterative methods for solving the symmetric eigenvalue problem (\ref{Aq=lambda q}). In particular,
	\begin{itemize}
		\item when $\beta_k = 0, k=1,2,\dots ,k_{\max}-1$, the TPPM becomes the PM in \cite{PM1929}.
		\item  when $\beta_k$ is fixed,  the TPPM becomes the APM in \cite{SPowerIteration_pca2018}.
	\end{itemize}
\end{remark}

\section{Convergence Analysis}\label{sec:3}
Let the eigenvalues $\{\lambda_i\}_{i=1}^n$ of the matrix $A\in \mathbb{R}^{n \times n}$ be arranged as
$$
\lambda_1 \ge  \lambda_2 \ge \cdots \ge \lambda_n.
$$
In this section, we first analyze the convergence property of the TPPM for the eigenpair $(\lambda_1,q_1)$. Then, we  demonstrate that this convergence property equally holds for another extreme eigenpair $(\lambda_n, q_n)$.

For simplicity, we first define some notations and functions.
Similar to \cite{Three_term}, we define $K_0 = 0$ and  $K_i$ is the sum of $i$ indicators, i.e. $K_i = \sum\limits_{j\in S}k_j$,  where $S$ is the set of $i$ indices of interest,  $k_j$ is a non-negative integer, $j = 1,2,\dots$.
For $i \ge 1$, define
\begin{equation}
	\sum\limits_{K_i = M}  = \sum\limits_{k_1+\cdots+k_i = M}= \sum\limits_{k_1 = 0}^{M}\sum\limits_{k_2+\cdots+k_i = 0}^{M-k_1}=\cdots = \sum\limits_{k_1 = 0}^M  \sum\limits_{k_2 = 0}^{M-k_1}\cdots \sum\limits_{k_i = 0}^{M-k_1-\cdots - k_{i-1}},
\label{eq:custom_label}
\end{equation}
where $M$ is a non-negative integer.

In addition, the notation $\lfloor \frac{k}{2} \rfloor$ is used to denote the greatest integer less than or equal to $\frac{k}{2}$. Obviously,  when $k$ is even, $\lfloor\frac{k}{2} \rfloor = \lfloor\frac{k-1}{2} \rfloor+1 = \lfloor \frac{k+1}{2} \rfloor$; when $k$ is odd,  $\lfloor \frac{k}{2} \rfloor = \lfloor \frac{k-1}{2}\rfloor = \lfloor\frac{k+1}{2}\rfloor-1$.

 Note that the normalized iterates  $\{x_k\}$ from Method ~\ref{TPPM} are nonlinear due to the normalization step, which complicates the direct derivation of the recurrence relation for the eigenvector series. To address this issue, we introduce an unnormalized sequence $\{z_k\}$ that shares the same directions as $\{x_k\}$ but avoids the nonlinear vector normalization. 
 
 Define the normalization factor  $s_{k+1}$ as the norm of the unnormalized step from Method ~\ref{TPPM}, i.e.,
\[
s_{k+1}:=\Vert y_{k+1}\Vert_2
=\Vert Ax_k+\beta_{k-1}x_{k-1}\Vert_2
\]
 with $s_{k+1}>0$ and $s_1=1$ for notational convenience. Further, we introduce a scalar sequence \ $\{c_k\}$  to accumulate the factors $\{s_k\}$ as  
\[
c_{k+1}=c_ks_{k+1},\qquad k=1,2,\dots
\]
with $c_0=c_1=1$.  Then we define the unnormalized sequence as $z_k := c_k x_k$.  Since \(\|x_k\|_2 = 1\), it follows that $x_k = z_k/\|z_k\|_2$. Substituting $x_k = z_k/c_k $ into Method~\ref{TPPM} yields
\begin{align}
z_{k+1} &= c_{k+1} x_{k+1}\notag \\
&= c_k s_{k+1} \left( \frac{y_{k+1}}{s_{k+1}} \right) = c_k y_{k+1} \quad \text{(Eliminate normalization via Step 7)}\notag \\
&= c_k (A x_k + \beta_{k-1} x_{k-1}) \quad \text{(Substitute Step 6)} \notag\\
&= A(c_k x_k) + \hat{\beta}_{k-1}(c_{k-1} x_{k-1})\notag \\
&= A z_k + \hat{\beta}_{k-1} z_{k-1},\label{recursive}
\end{align}
where \(\hat{\beta}_{k-1}\) is a scalar-adaptive coefficient defined by
\begin{equation}\label{scalarPara}
\hat{\beta}_{k-1} := \frac{c_k}{c_{k-1}} \beta_{k-1}=s_k\beta_{k-1}.
\end{equation}
 The nonlinearity is now confined to this scalar coefficient $\hat{\beta}_{k-1}$,  yielding a three-term recurrence without vector normalization.
 
 Now we introduce some necessary notations and establish a preliminary lemma concerning the recurrence relation for  the  unnormalized sequence $\{z_k\}$
 
 Define
\[
	{ \widehat f_k(x)} = x^{k-1}+\sum\limits_{m = 1}^{\lfloor\frac{k-1}{2}\rfloor}\sum\limits_{K_m = k-1-2m}\left( x^{k-1-2m}\prod\limits_{q = 1}^{m}{ \widehat\beta_{k-2q+1-K_q}}\right),
\]
\[
	{ \widehat g_{k}(x)} = x^{k-1}+\sum\limits_{m = 1}^{\lfloor \frac{k-1}{2} \rfloor}\sum\limits_{K_m = k-1-2m} \left( x^{k-1-2m}\prod\limits_{q = 1}^{m}{ \widehat \beta_{k-2q+2-K_q}}\right),
\]
for $k = 3,4,\dots$, and {$\widehat g_0(x) = 0$, $\widehat f_1(x) = \widehat g_1(x) = 1$, $\widehat f_2(x) = \widehat g_2(x) = x$}. Then the associated matrix polynomials are
	\begin{equation}\label{poly:f}
	{ \widehat f_k(X)} = X^{k-1}+\sum\limits_{m = 1}^{\lfloor\frac{k-1}{2}\rfloor}\sum\limits_{K_m = k-1-2m}\left( X^{k-1-2m}\prod\limits_{q = 1}^{m}{ \widehat\beta_{k-2q+1-K_q}}\right),
\end{equation}
\begin{equation}\label{poly:g}
	{ \widehat g_{k}(X)} = X^{k-1}+\sum\limits_{m = 1}^{\lfloor \frac{k-1}{2} \rfloor}\sum\limits_{K_m = k-1-2m} \left( X^{k-1-2m}\prod\limits_{q = 1}^{m}{ \widehat\beta_{k-2q+2-K_q}}\right),
\end{equation}
for $k = 3,4,\dots$, and {$\widehat g_0(X) = O$, $\widehat f_1(X) = \widehat g_1(X) = I$, $\widehat f_2(X) = \widehat g_2(X) = X$}, where $X$ is a square matrix and $O$ denotes zero matrix.

{
\begin{lemma}
Let $\{\widehat{\beta}_k\}$ be the parameter sequence
defined  in \eqref{scalarPara}, and let the matrix polynomials
$\widehat f_k(X)$ and $\widehat g_k(X)$ be given by
(\ref{poly:f}) and (\ref{poly:g}), respectively.
 The unnormalized sequence  $\{z_k\}_{k=1}^\infty$ is generated by $z_{k+1}=Az_k+\widehat{\beta}_{k-1}z_{k-1},\quad k=1,2,\ldots.$ Then
\begin{equation}\label{iter_z}
z_{k+1}
=\left[A\widehat f_k(A)
+\widehat{\beta}_1\widehat g_{k-1}(A)\right]z_1
+\widehat{\beta}_0\widehat f_k(A)z_0.
\end{equation}
\end{lemma}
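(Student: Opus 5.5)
The plan is to prove \eqref{iter_z} by induction on $k$. Write $z_{k+1}=P_kz_1+Q_kz_0$, where $P_k$ and $Q_k$ are polynomials in $A$. Substituting this form into $z_{k+1}=Az_k+\widehat\beta_{k-1}z_{k-1}$ shows that both coefficient sequences obey the same three-term recurrence:
\[
P_{k+1}=AP_k+\widehat\beta_kP_{k-1},\qquad Q_{k+1}=AQ_k+\widehat\beta_kQ_{k-1}.
\]
The claim is that $P_k=A\widehat f_k(A)+\widehat\beta_1\widehat g_{k-1}(A)$ and $Q_k=\beta_0$-scaled as $Q_k=\widehat\beta_0\widehat f_k(A)$.

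\textbf{Base cases.} For $k=1$ we have $z_2=Az_1+\widehat\beta_0z_0$. This matches the formula, since $A\widehat f_1(A)+\widehat\beta_1\widehat g_0(A)=A$ and $\widehat\beta_0\widehat f_1(A)=\widehat\beta_0 I$. For $k=2$ we have $z_3=(A^2+\widehat\beta_1 I)z_1+\widehat\beta_0Az_0$. This matches $A\widehat f_2(A)+\widehat\beta_1\widehat g_1(A)$ and $\widehat\beta_0\widehat f_2(A)$.

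\textbf{Key lemma.} The heart of the argument is a pair of recurrences for the polynomials, valid for $k\ge 2$:
\begin{equation*}
\widehat f_{k+1}(X)=X\widehat f_k(X)+\widehat\beta_k\widehat f_{k-1}(X),\qquad
\widehat g_{k}(X)=X\widehat g_{k-1}(X)+\widehat\beta_k\widehat g_{k-2}(X).
\end{equation*}
For small $k$ these can be checked directly against the definitions. For example, $\widehat f_3=X^2+\widehat\beta_2$ and $\widehat g_3=X^2+\widehat\beta_3$, and $\widehat g_0=O$ makes $\widehat g_2=X\widehat g_1$.

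\textbf{Inductive step.} Assuming the key lemma, suppose the formula holds for $k-1$ and $k$. For the $z_0$-coefficient, $Q_{k+1}=\widehat\beta_0\bigl(A\widehat f_k+\widehat\beta_k\widehat f_{k-1}\bigr)(A)=\widehat\beta_0\widehat f_{k+1}(A)$. For the $z_1$-coefficient,
\[
P_{k+1}=A\bigl(A\widehat f_k+\widehat\beta_k\widehat f_{k-1}\bigr)(A)+\widehat\beta_1\bigl(A\widehat g_{k-1}+\widehat\beta_k\widehat g_{k-2}\bigr)(A).
\]
By the two polynomial recurrences this equals $A\widehat f_{k+1}(A)+\widehat\beta_1\widehat g_k(A)$, which is \eqref{iter_z} at $k+1$. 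All polynomials in $A$ commute, so there are no ordering issues.

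\textbf{Proving the key lemma: the main obstacle.} The real work is verifying the two polynomial recurrences from the closed-form sums \eqref{poly:f} and \eqref{poly:g}. I would expand $\widehat f_{k+1}$ and split each inner sum $\sum_{K_m=k-2m}$ according to whether $k_1=0$ or $k_1\ge1$.

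In the part with $k_1\ge1$, substitute $k_1'=k_1-1$. This leaves every subscript $k+1-2q+1-K_q$ equal to $k-2q+1-K_q'$, so the power $X^{k-2m}=X\cdot X^{k-1-2m}$ produces exactly $X\widehat f_k(X)$. The leading term $X^k$ and the range limits on $m$ have to be reconciled, using the floor identities stated before the lemma.

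In the part with $k_1=0$, the $q=1$ factor is $\widehat\beta_{k+1-2+1}=\widehat\beta_k$. Relabel $m'=m-1$ and $(k_2,\dots,k_m)\mapsto(k_1,\dots,k_{m'})$. The remaining subscripts become $(k-1)-2q'+1-K_{q'}$, so this part equals $\widehat\beta_k\widehat f_{k-1}(X)$.

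The same bookkeeping, with every subscript shifted by one, gives the recurrence for $\widehat g_k$. The delicate points are the boundary terms at the parity changes of $k$, where $\lfloor\frac{k}{2}\rfloor$ jumps, and the term $m=\lfloor\frac{k}{2}\rfloor$ with $k$ even, whose $X^0$ contribution must come only from the $k_1=0$ branch. I would check these against the explicit cases $k=3,4$ before writing the general index shift.
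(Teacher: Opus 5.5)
Your proposal is correct and follows essentially the same route as the paper: induction on $k$ using the recurrences $\widehat f_{k+1}=X\widehat f_k+\widehat\beta_k\widehat f_{k-1}$ and $\widehat g_k=X\widehat g_{k-1}+\widehat\beta_k\widehat g_{k-2}$. The paper proves these the same way, shifting $k_1$ so that the $k_1\ge1$ terms give $X\widehat f_k$ and the $k_1=0$ terms give $\widehat\beta_k\widehat f_{k-1}$, with a parity split for the top index $m=\lfloor k/2\rfloor$.

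One point to make explicit when you write it out: the $k_1=0$ branch supplies the leading term $\widehat\beta_k X^{k-2}$ of $\widehat\beta_k\widehat f_{k-1}$ only because $\sum_{K_m=M}$ is, by the paper's nested-sum definition, a sum over $k_1+\cdots+k_m\le M$ rather than $=M$. With literal equality the closed form would give $\widehat f_4=X^3+\widehat\beta_2 X$, and the recurrence would fail.
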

}

\begin{proof}
{
	The proof will be completed by induction. The lemma obviously holds for $k=1,2,3$. Assume that (\ref{iter_z}) holds for $k=l>3$. When $k=l+1$, we have
	\begin{align*}
		z_{l+2} &= Az_{l+1}+\widehat\beta_{l}z_{l}\\
		&=A\{[A\widehat f_l(A)+\widehat\beta_1\widehat g_{l-1}(A)]z_1+\widehat\beta_0\widehat f_l(A)z_0\}
		+\widehat\beta_{l}\{[A\widehat f_{l-1}(A)+\widehat\beta_1\widehat g_{l-2}(A)]z_1+\widehat\beta_0\widehat f_{l-1}(A)z_0\}\\
		&=\{A[A\widehat f_l(A)+\widehat\beta_{l}\widehat f_{l-1}(A)]
		+\widehat\beta_1[A\widehat g_{l-1}(A)+\widehat\beta_{l}\widehat g_{l-2}(A)]\}z_1
		+\widehat\beta_0[A\widehat f_l(A)+\widehat\beta_{l}\widehat f_{l-1}(A)]z_0.
	\end{align*}
	
	Next we show
	\begin{equation}
		\label{f recursion}
		\widehat f_{l+1}(A) =A\widehat f_l(A)+\widehat\beta_{l}\widehat f_{l-1}(A)
	\end{equation}
	and
	\begin{equation}
		\label{g recursion}
		\widehat g_{l}(A) =A\widehat g_{l-1}(A)+\widehat\beta_{l}\widehat g_{l-2}(A).
	\end{equation}
	
	We will obtain (\ref{f recursion}) by computing $A\widehat f_l(A)$ and $\widehat\beta_l\widehat f_{l-1}(A)$, respectively.
	
	From (\ref{eq:custom_label}) and (\ref{poly:f}), a simple transformation on $k_1$ yields
	\begin{equation}
		\label{Af}
		\begin{aligned}
			&A\widehat f_l(A) = A^{l}+ \sum\limits_{m = 1}^{\lfloor \frac{l-1}{2}\rfloor}A^{l-2m}\left(\sum\limits_{k_1=0}^{l-1-2m}\sum\limits_{k_2+\cdots+k_m=0}^{l-1-2m-k_1} \prod\limits_{q = 1}^{m}\widehat\beta_{l-2q+1-K_q}\right)\\
			&= A^{l}+\sum\limits_{m = 1}^{\lfloor \frac{l-1}{2}\rfloor} A^{l-2m}\left(\sum\limits_{k_1=1}^{l-2m}\sum\limits_{k_2+\cdots+k_m=0}^{l-2m-k_1} \prod\limits_{q = 1}^{m}\widehat\beta_{l-2q+2-K_q}\right)\\
		\end{aligned}
	\end{equation}
	
	Next we consider $\widehat\beta_l\widehat f_{l-1}(A)$. From (\ref{poly:f}) and the fact that $\lfloor \frac{l}{2} \rfloor = \lfloor \frac{l-2}{2} \rfloor+1$, simple transformations on $m$ and $q$ give

		\begin{equation}
		\label{main_eq:betaf}
		\begin{aligned}
			&\widehat\beta_{l}\widehat f_{l-1}(A) =
			\widehat\beta_{l}A^{l-2}+\widehat\beta_{l} \sum\limits_{m = 1}^{\lfloor \frac{l-2}{2}\rfloor} A^{l-2-2m}\left( \sum\limits_{K_m = l-2-2m}\prod\limits_{q = 1}^{m}\widehat\beta_{l-2q-K_q}\right)\\
			&=\widehat\beta_{l} A^{l-2}+\sum\limits_{m = 2}^{\lfloor \frac{l}{2}\rfloor}A^{l-2m}\left(\sum\limits_{K_{m-1} = l-2m} \widehat\beta_{l} \prod\limits_{q = 1}^{m-1}\widehat\beta_{l-2q-K_q}\right)\\
			&=\widehat\beta_{l} A^{l-2}+\sum\limits_{m = 2}^{\lfloor \frac{l}{2}\rfloor}A^{l-2m}\left(\widehat\beta_{l} \sum\limits_{\substack{K_m = l-2m, \\ k_1 = 0}} \prod\limits_{q = 2}^{m}\widehat\beta_{l-2(q-1)-K_{q-1}}\right)\\
			&=\widehat\beta_{l} A^{l-2}+\sum\limits_{m = 2}^{\lfloor \frac{l}{2}\rfloor}A^{l-2m}\left( \sum\limits_{k_1 = 0}^0\sum\limits_{k_2+\cdots+k_m = 0}^{l-2m-k_1} \prod\limits_{q = 1}^{m}\widehat\beta_{l-2q+2-K_{q}}\right)\\
			&=\sum\limits_{m=1}^{\lfloor \frac{l}{2}\rfloor} A^{l-2m}\left( \sum\limits_{k_1=0}^0 \sum\limits_{k_2+\cdots+k_m = 0}^{l-2m-k_1} \prod_{q=1}^m\widehat\beta_{l-2q+2-K_q} \right),
		\end{aligned}
	\end{equation}
 where the last two equalities follow from $K_m = 0+k_2+\cdots+k_m$ and $\widehat\beta_l = \widehat\beta_{l-2(1-1)-K_0}$. We now distinguish the following two cases.
	
	{\bf Case 1.}  If $l$ is odd,  then $\lfloor \frac{l-1}{2}\rfloor = \lfloor \frac{l}{2} \rfloor$. It follows from (\ref{Af}) that
	
\[	A\widehat f_l(A) = A^{l}+\sum\limits_{m = 1}^{\lfloor \frac{l}{2}\rfloor} A^{l-2m}\left(\sum\limits_{k_1=1}^{l-2m}\sum\limits_{k_2+\cdots+k_m=0}^{l-2m-k_1} \prod\limits_{q = 1}^{m}\widehat\beta_{l-2q+2-K_q}\right),
\]
which, together with (\ref{main_eq:betaf}),~ gives
\[
A\widehat f_l(A)+\widehat\beta_l\widehat f_{l-1}(A) = A^{l}+\sum\limits_{m = 1}^{\lfloor \frac{l}{2}\rfloor}A^{l-2m}\left(\sum\limits_{K_m = l-2m}\prod\limits_{q = 1}^{m}\widehat\beta_{l-2q+2-K_q}\right) = \widehat f_{l+1}(A).
\]

		{\bf Case 2.} If $l$ is even, then $\lfloor \frac{l}{2}\rfloor=\lfloor \frac{l-1}{2}\rfloor+1$. It holds from (\ref{main_eq:betaf}) that
		\begin{equation}
			\label{case2_betaA}
	\widehat\beta_{l}\widehat f_{l-1}(A) = \sum\limits_{m=1}^{\lfloor \frac{l-1}{2}\rfloor} A^{l-2m}\left( \sum\limits_{k_1=0}^0 \sum\limits_{k_2+\cdots+k_m = 0}^{l-2m-k_1} \prod_{q=1}^m\widehat\beta_{l-2q+2-K_q} \right)+\prod\limits_{q=1}^{\lfloor \frac{l}{2}\rfloor} \widehat\beta_{l-2q+2}.
		\end{equation}
		Combining (\ref{Af}) and (\ref{case2_betaA}) yields
		\begin{equation}
			\begin{aligned}
				&A\widehat f_l(A)+\widehat\beta_l\widehat f_{l-1}(A) = A^{l}+\sum\limits_{m = 1}^{\lfloor \frac{l-1}{2} \rfloor}A^{l-2m}\left( \sum\limits_{K_m = l-2m} \prod\limits_{q = 1}^m \widehat\beta_{l-2q+2-K_q} \right) +\prod_{q=1}^{\lfloor \frac{l}{2} \rfloor} \widehat\beta_{l-2q+2}\\
				&=A^{l}+\sum\limits_{m = 1}^{\lfloor \frac{l}{2} \rfloor}A^{l-2m}\sum\limits_{K_m = l-2m} \prod\limits_{q = 1}^{m}\widehat\beta_{l-2q+2-K_{q}}
				=\widehat f_{l+1}(A).
			\end{aligned}
		\end{equation}
Therefore, by the induction hypothesis, (\ref{f recursion}) holds for both odd and even values of $l$.
		
	Similar to the proof of (\ref{f recursion}), it is easy to check that (\ref{g recursion})  holds.}
\end{proof}

{
To facilitate the convergence analysis, let
$x_0=x_1$ and $\beta_0=\beta_1=0$. Then
$z_0=z_1=x_0$ and
$\widehat{\beta}_0=\widehat{\beta}_1=0$. Therefore,
\[
z_{k+1}=A\widehat f_k(A)x_0.
\]
Since $x_{k+1}=z_{k+1}/\Vert z_{k+1}\Vert_2$, we obtain
\begin{equation}\label{new-iterate}
x_{k+1}=
\frac{A\widehat f_k(A)x_0}
{\Vert A\widehat f_k(A)x_0\Vert_2}.
\end{equation}
}

It is obvious that {$\widehat f_1(\lambda_i) = 1$}. Assume that {$\widehat f_k(\lambda_i)\neq 0$}, $k = 2,3, \dots$, $i = 1,2, \dots, n$. In the following theorem, we present a rigorous demonstration about the convergence of Method \ref{TPPM} with respect to the eigenpair $(\lambda_1, q_1)$.

\begin{theorem}
	Suppose that the matrix $A\in \mathbb{R}^{n\times n}$ is symmetric and admits the spectral decomposition
	$$
	A = Q\text{diag}(\lambda_1,\lambda_2,\cdots,\lambda_n) Q^T,
	$$
	where the eigenvector matrix $Q = [q_1,q_2,\cdots,q_n]$ is orthogonal. Let $\{x_{k+1}\}_{k=1}^\infty$ and $\{\mu_{k+1}\}_{k=1}^\infty$ be the iteration sequences generated by the TPPM, and define $\theta_{k+1}\in [0,\frac{\pi}{2}]$ by $\cos(\theta_{k+1}) = \vert q_1^T x_{k+1}\vert$. Assume that $\cos(\theta_0)\neq 0$, and there exists a constant $\delta$, independent of the iteration index $k$, such that
	{
		\begin{equation}
		\label{thm_inq_1}
		\left\vert \frac{ \widehat{f}_{k+1}(\lambda_j) \widehat{f}_{k}(\lambda_1) }{ \widehat{f}_{k+1}(\lambda_1) \widehat{f}_{k}(\lambda_j)} \right\vert \le \delta<1,~ \forall j = 2,\dots, n,~k = 1,2, \dots .
	\end{equation}
	}
 Then, for $k =1, 2, 3, \dots $, it holds that
	\begin{equation}\label{thm_conv_1}
		\vert \sin (\theta_{k+1})\vert\le \tan(\theta_0)\max \left\{\left\vert \frac{\lambda_2}{\lambda_1}\right\vert, \left\vert\frac{\lambda_n}{\lambda_1}\right\vert \right\} \delta^{(k-1)}
	\end{equation}
	and
	\begin{equation}\label{thm_conv_2}
		\vert \mu_{k+1}-\lambda_1\vert \le \tan(\theta_0)^2 \max \left\{\left\vert \frac{\lambda_2}{\lambda_1}\right\vert^2, \left\vert\frac{\lambda_n}{\lambda_1}\right\vert^2 \right\} \max\limits_{2\le j\le n}\vert\lambda_j-\lambda_1\vert \delta^{2(k-1)}.
	\end{equation}
\end{theorem}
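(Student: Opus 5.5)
We will work from the closed form \eqref{new-iterate}, $x_{k+1}=A\widehat f_k(A)x_0/\Vert A\widehat f_k(A)x_0\Vert_2$, which holds under the convention $x_0=x_1$, $\widehat\beta_0=\widehat\beta_1=0$. Expand $x_0=\sum_{i=1}^n a_i q_i$ with $a_i=q_i^Tx_0$, so that $|a_1|=\cos\theta_0\neq0$. Then
\[
A\widehat f_k(A)x_0=\sum_{i=1}^n a_i\lambda_i\widehat f_k(\lambda_i)q_i .
\]
From this,
\[
\tan^2\theta_{k+1}=\sum_{j=2}^n\frac{a_j^2}{a_1^2}\left|\frac{\lambda_j}{\lambda_1}\right|^2\left|\frac{\widehat f_k(\lambda_j)}{\widehat f_k(\lambda_1)}\right|^2 .
\]
Since $\lambda_1\ge\lambda_j\ge\lambda_n$, we have $|\lambda_j|\le\max\{|\lambda_2|,|\lambda_n|\}$. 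This produces the factor $\max\{|\lambda_2/\lambda_1|,|\lambda_n/\lambda_1|\}$. The remaining ratio is controlled by telescoping the hypothesis \eqref{thm_inq_1}:
\[
\left|\frac{\widehat f_k(\lambda_j)}{\widehat f_k(\lambda_1)}\right|
=\left|\frac{\widehat f_1(\lambda_j)}{\widehat f_1(\lambda_1)}\right|\prod_{m=1}^{k-1}\left|\frac{\widehat f_{m+1}(\lambda_j)\widehat f_m(\lambda_1)}{\widehat f_{m+1}(\lambda_1)\widehat f_m(\lambda_j)}\right|\le\delta^{k-1}.
\]
This uses $\widehat f_1\equiv1$ and the standing nonvanishing assumption $\widehat f_m(\lambda_i)\neq0$, which makes every quotient well defined. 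Summing and using $\sum_{j\ge2}a_j^2/a_1^2=\tan^2\theta_0$ gives $\tan\theta_{k+1}\le\tan\theta_0\max\{\cdot\}\delta^{k-1}$. Finally $|\sin\theta_{k+1}|\le\tan\theta_{k+1}$ yields \eqref{thm_conv_1}.

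For the eigenvalue estimate \eqref{thm_conv_2}, write $x_{k+1}=\sum_i b_iq_i$ with $\sum_i b_i^2=1$. Then
\[
\mu_{k+1}-\lambda_1=\sum_{j\ge2}b_j^2(\lambda_j-\lambda_1),
\]
so
\[
|\mu_{k+1}-\lambda_1|\le\max_{2\le j\le n}|\lambda_j-\lambda_1|\,\sin^2\theta_{k+1},
\]
because $\sum_{j\ge2}b_j^2=\sin^2\theta_{k+1}$. Squaring the bound from the first part, via $\sin^2\theta_{k+1}\le\tan^2\theta_{k+1}$, gives \eqref{thm_conv_2} directly.

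The main obstacle is bookkeeping rather than substance. First, the TPPM iterates must coincide with \eqref{new-iterate}. This relies on the preceding lemma and the reduction to the unnormalized sequence $z_k$, which we take as given. Second, the indexing must be handled carefully: the theorem is stated with $\theta_0$ defined by $x_0$, while the iterates start at $x_1=x_0$. We will check that the telescoping starts at $m=1$, so the exponent is exactly $k-1$ and no extra factor is lost or gained. The factor $|\lambda_j/\lambda_1|$ enters once, through the leading $A$ in $A\widehat f_k(A)$, and this explains why it appears only to the first power in \eqref{thm_conv_1}.
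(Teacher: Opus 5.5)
Your proposal is correct and follows the paper's proof essentially step for step. You expand $x_0$ in the eigenbasis and bound $\sin^2\theta_{k+1}$ by $\tan^2\theta_{k+1}$; the paper gets the same quantity by keeping only the $j=1$ term in the denominator. You then telescope the ratio hypothesis using $\widehat f_1\equiv 1$ to obtain $\delta^{k-1}$, use $\sum_{j\ge 2}a_j^2/a_1^2=\tan^2\theta_0$, and bound $|\mu_{k+1}-\lambda_1|$ by $\max_{j\ge 2}|\lambda_j-\lambda_1|\,\sin^2\theta_{k+1}$, exactly as the paper does.
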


\begin{proof}
	Let $t_1,t_2,\dots,t_n \in \mathbb{R}$ be real numbers such that $x_0 = \sum\limits_{j=1}^n t_jq_j$.
	 The normalization condition $\left\Vert x_0\right\Vert_2 = 1$, together with the orthonormality of $Q$, 	
	implies that  $\sum\limits_{j=1}^n t_j^2 = 1$. It follows  from (\ref{new-iterate})  that $x_{k+1}$ can be rewritten as
	\begin{equation}
		\label{thm_re_x}
		{x_{k+1} = \frac{\sum\limits_{j = 1}^n \left[A\widehat f_k(A)\right]t_j q_j}{\left\Vert \sum\limits_{j = 1}^n \left[A\widehat f_k(A)\right]t_j q_j
			\right\Vert_2} = \frac{\sum\limits_{j = 1}^n \left[\lambda_j \widehat f_{k}(\lambda_j)\right]t_j q_j}{\left\Vert \sum\limits_{j = 1}^n \left[\lambda_j \widehat f_{k}(\lambda_j)\right]t_j q_j
			\right\Vert_2}.}
	\end{equation}
	
		For $k =1, 2,3,\dots$, we can claim that the iteration sequence $\{ \mu_{k+1} \} $ is convergent to $\lambda_1$ if the following inequalities are simultaneously satisfied:
	\begin{equation}\label{cv_condition}
		{\left\vert \frac{ \widehat f_{k+1}(\lambda_j) \widehat f_{k}(\lambda_1) }{ \widehat f_{k+1}(\lambda_1) \widehat f_{k}(\lambda_j)} \right\vert \le \delta<1},~ \forall j = 2,\dots, n,~\text{for~}k =1,2,\dots.
	\end{equation}
	
		As a matter of fact, as
	$$
	 \sin(\theta_{k+1})^2 = 1-\vert q_1^Tx_{k+1}\vert^2,
	$$
	by making use of (\ref{thm_re_x}), we obtain
	
	\begin{equation}\label{thm_sin1}
		 {\sin(\theta_{k+1})^2 =  1-\frac{ \left( t_1  \lambda_1 \widehat f_k(\lambda_1)  \right)^2 }
		{ \sum\limits_{j=1}^n \left( t_j  \lambda_j \widehat f_{k}(\lambda_j) \right)^2 }
		=\frac{\sum\limits_{j=2}^n  \left( t_j  \lambda_j \widehat f_{k}(\lambda_j) \right)^2}
		{ \sum\limits_{j=1}^n  \left( t_j  \lambda_j \widehat f_{k}(\lambda_j) \right)^2  }.}
	\end{equation}
	
	Notice that {$|t_1| = \vert q_1^T x_0 \vert = \cos(\theta_0) \neq 0$}. Thus, with the inequality (\ref{cv_condition})
	\begin{equation}
		\label{neq:t}
		{\sum\limits_{j=1}^n \left(t_j  \lambda_j \widehat f_{k}(\lambda_j) \right)^2}
		\ge
		{\left(t_1  \lambda_1 \widehat f_k(\lambda_1) \right)^2},
	\end{equation}
	 we have
	\begin{equation}\label{thm_sin2}
		\begin{aligned}
			\vert \sin (\theta_{k+1})\vert^2
			&\le  {\frac{\sum\limits_{j=2}^n \left( t_j  \lambda_j \widehat f_{k}(\lambda_j) \right)^2}
			{\left(  t_1  \lambda_1 \widehat f_k(\lambda_1) \right)^2}}
			={\frac{1}{t_1^2}\sum\limits_{j=2}^n t_j^2 \frac{\lambda_j^2}{\lambda_1^2}  \left( \frac{ \widehat f_{k}(\lambda_j)}{ \widehat f_k(\lambda_1)}\right)^2}\\
			&={\frac{1}{t_1^2}\sum\limits_{j=2}^n t_j^2 \frac{\lambda_j^2}{\lambda_1^2}  \left( \frac{ \widehat f_{k}(\lambda_j)}{ \widehat f_k(\lambda_1)} \right)^2 \left( \frac{ \widehat f_{k-1}(\lambda_1)}{ \widehat f_{k-1}(\lambda_j)} \cdot  \frac{ \widehat f_{k-1}(\lambda_j)}{ \widehat f_{k-1}(\lambda_1)} \right)^2 \cdots \left( \frac{ \widehat f_1(\lambda_1)}{\widehat f_1(\lambda_j)}  \frac{\widehat f_1(\lambda_j)}{ \widehat f_1(\lambda_1)} \right)^2} \\
			&{\le \frac{1-t_1^2}{t_1^2} \max \left\{\left\vert \frac{\lambda_2}{\lambda_1}\right\vert^2, \left\vert\frac{\lambda_n}{\lambda_1}\right\vert^2 \right\} \delta^{2(k-1)}}\\
			&\le \tan(\theta_0)^2 \max \left\{\left\vert \frac{\lambda_2}{\lambda_1}\right\vert^2, \left\vert\frac{\lambda_n}{\lambda_1}\right\vert^2 \right\} \delta^{2(k-1)},
		\end{aligned}
	\end{equation}
	which immediately implies the validity of the estimate (\ref{thm_conv_1}).
	
		In addition, with substitution of (\ref{thm_re_x})  into the relationship $\mu_{k+1} = x_{k+1}^TAx_{k+1}$, it holds that
	\begin{equation*}
		{\mu_{k+1} = \frac{\sum\limits_{j=1}^n \lambda_j \left( t_j \lambda_j  \widehat f_{k}(\lambda_j) \right)^2}{\sum\limits_{j=1}^n \left(t_j \lambda_j \widehat f_{k}(\lambda_j) \right)^2}.}
	\end{equation*}
	
And thus,
	
	\begin{equation}
		\begin{aligned}
			{\vert \mu_{k+1}-\lambda_1\vert} &= {\left\vert \frac{\sum\limits_{j=2}^n (\lambda_j-\lambda_1) \left(t_j \lambda_j \widehat f_{k}(\lambda_j) \right)^2}{\sum\limits_{j=1}^n \left(t_j \lambda_j \widehat f_{k}(\lambda_j) \right)^2} \right\vert}
			{\le  \frac{\sum\limits_{j=2}^n \vert\lambda_j-\lambda_1\vert  \left(t_j \lambda_j \widehat f_{k}(\lambda_j)  \right)^2}{\sum\limits_{j=1}^n \left(t_j \lambda_j \widehat f_{k}(\lambda_j) \right)^2}}\\
			&{\le \max\limits_{2\le j\le n}\vert \lambda_j-\lambda_1\vert\frac{\sum\limits_{j=2}^n\left( t_j \lambda_j \widehat f_{k}(\lambda_j) \right)^2 }{\sum\limits_{j=1}^n\left( t_j \lambda_j \widehat f_{k}(\lambda_j) \right)^2}}.
		\end{aligned}
	\end{equation}
	
	It then follows from (\ref{thm_sin1}) and (\ref{thm_sin2}) that the estimate (\ref{thm_conv_2}) holds.
\end{proof}

 Given that $\beta_1 = \beta_0 = 0$, we have {$\widehat\beta_1 = \widehat\beta_0 = 0$}. We next focus on determining {$\widehat\beta_k$} numerically for $k \ge 2$ under the assumption that {$\widehat\beta_k \neq 0$}.
 From the recursion \eqref{f recursion}, it is easy to get the recursion of {$\widehat f_{k+1}(\lambda_j)$}:
\begin{equation}
	\label{recursion_lambda}
	{\widehat f_{k+1}(\lambda_j) = \lambda_j \widehat f_{k}(\lambda_j) +\widehat\beta_{k}\widehat f_{k-1}(\lambda_j).}
\end{equation}
By substituting (\ref{recursion_lambda}) into (\ref{thm_inq_1}), we obtain
\begin{equation}\label{neq:eq0}
	{\left\vert \frac{ \lambda_j \widehat f_k(\lambda_j)\widehat f_k(\lambda_1) +\widehat\beta_{k}\widehat f_{k-1}(\lambda_j) \widehat f_k(\lambda_1) }{ \lambda_1 \widehat f_k(\lambda_1)\widehat f_k(\lambda_j) +\widehat\beta_{k}\widehat f_{k-1}(\lambda_1)\widehat f_k(\lambda_j) } \right\vert \le \delta<1},~ \forall j = 2,\cdots,n,~k = 2,3, \dots.
\end{equation}
It follows that
\begin{equation}
	\label{simply1_thm_inq_1}
	{\left\vert \frac{ \lambda_j +\widehat\beta_{k} m_{j,k}}{ \lambda_1  +\widehat\beta_{k} m_{1,k}  } \right\vert \le \delta<1},~ \forall j = 2,\cdots,n,~k = 2,3, \dots,
\end{equation}
where {$m_{j,k}= \frac{\widehat f_{k-1}(\lambda_j)}{\widehat f_{k}(\lambda_j)}$, $m_{1,k}= \frac{\widehat f_{k-1}(\lambda_1)}{\widehat f_k(\lambda_1)}$}.

The following theorem will provide the conditions under which the convergence (\ref{thm_inq_1}) holds.
\begin{theorem}
	\label{thm:para}
Let the eigenvalues $\{\lambda_j\}_{j=1}^n$ of the symmetric matrix $A\in \mathbb{R}^{n\times n}$ satisfy
$$\lambda_1>\lambda_2\ge\cdots\ge\lambda_n.$$ If there exist constants $m_* \neq 0 $ and $n_*$ such that
\begin{equation*}
\left|\frac{m_*(\lambda_2-\lambda_n)}{2\lambda_1 m_*-\lambda_n n_*-\lambda_2 n_*}\right|\le\delta<1,
\end{equation*}
then the inequality in (\ref{thm_inq_1}) holds.
\end{theorem}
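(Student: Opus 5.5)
The plan is to work from the reformulation \eqref{simply1_thm_inq_1} of the convergence condition \eqref{thm_inq_1}. That reformulation was obtained from the three-term recursion \eqref{recursion_lambda}, and under the standing assumptions $\widehat f_k(\lambda_i)\neq 0$ and $\widehat\beta_k\neq 0$ it is equivalent to \eqref{thm_inq_1} for $k\ge 2$. For $k=1$ we have $\widehat\beta_1=0$, so the ratio reduces to $\lambda_j/\lambda_1$; this case is handled separately by the same linear-function argument with $\widehat\beta=0$, or absorbed into the constant. The key idea is to read the constants in the hypothesis as uniform surrogates for the ratios $m_{j,k}=\widehat f_{k-1}(\lambda_j)/\widehat f_k(\lambda_j)$: I take $m_*$ as the common value (or model) of $m_{j,k}$ for the non-dominant eigenvalues $j=2,\dots,n$, and $n_*$ as the value of $m_{1,k}$ for the dominant one. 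Under this identification the left-hand side of \eqref{simply1_thm_inq_1} becomes
\[
\left|\frac{\lambda_j+\widehat\beta_k m_*}{\lambda_1+\widehat\beta_k n_*}\right|, \qquad j=2,\dots,n .
\]
This is a function of $\lambda_j$ alone, and it is affine in $\lambda_j$.

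Next I choose $\widehat\beta_k$ in the Chebyshev/equioscillation way. The numerator $\lambda\mapsto \lambda+\widehat\beta_k m_*$ is affine, so its maximum absolute value over the interval $[\lambda_n,\lambda_2]$, which contains every $\lambda_j$ with $j\ge 2$, is attained at an endpoint. That maximum is minimized when the values at $\lambda_2$ and $\lambda_n$ are equal in size and opposite in sign, i.e.
\[
\widehat\beta_k=-\frac{\lambda_2+\lambda_n}{2m_*},
\]
which is well defined because $m_*\neq 0$. With this choice, $\max_{2\le j\le n}|\lambda_j+\widehat\beta_k m_*|=(\lambda_2-\lambda_n)/2$ and the denominator equals $\lambda_1-\frac{(\lambda_2+\lambda_n)n_*}{2m_*}$. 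Clearing the factor $2m_*$ turns the ratio into exactly
\[
\left|\frac{m_*(\lambda_2-\lambda_n)}{2\lambda_1 m_*-\lambda_n n_*-\lambda_2 n_*}\right|\le\delta<1 .
\]
This gives \eqref{simply1_thm_inq_1} uniformly in $j$ and $k$, and hence \eqref{thm_inq_1}. The strict inequality $\lambda_1>\lambda_2$ is what keeps $\lambda_1$ outside $[\lambda_n,\lambda_2]$, so a denominator bounded away from zero is plausible. The hypothesis guarantees that it is nonzero.

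The main obstacle is justifying that the ratios $m_{j,k}$ may be replaced by the constants $m_*$ and $n_*$, because in general they depend on $j$ and $k$. My first attempt would be to state the theorem in the sense the paper intends: $m_*$ and $n_*$ are the (limiting, or GPR-predicted) values of $m_{j,k}$ and $m_{1,k}$. In that reading the argument above is exact. If one wants genuine uniformity, I would instead try to bound the numerator by $\frac{\lambda_2-\lambda_n}{2}+|\widehat\beta_k|\max_j|m_{j,k}-m_*|$ and the denominator from below similarly, and absorb these perturbations into $\delta$. That is where the real difficulty lies, and I expect the paper to sidestep it by the surrogate interpretation.
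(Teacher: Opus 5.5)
Your argument is essentially the paper's: you reduce \eqref{thm_inq_1} to \eqref{simply1_thm_inq_1}, replace $m_{j,k}$ and $m_{1,k}$ by the surrogate constants $m_*$ and $n_*$ exactly as the paper does in its Steps 4--5, and take the equioscillation choice $\widehat\beta_k=-(\lambda_2+\lambda_n)/(2m_*)$ to obtain the hypothesis quantity; the paper's quadratic-inequality and feasible-interval analysis in Steps 1--4 is not needed for the sufficiency direction you prove. The one inaccuracy is your $k=1$ aside, because there the condition reads $|\lambda_j/\lambda_1|\le\delta$, which the hypothesis does not imply (e.g.\ $\lambda_1=1$, $\lambda_2=0.99$, $\lambda_n=0.98$, $m_*=n_*=1$ gives hypothesis value $1/3$); the paper's reduction is likewise stated only for $k\ge 2$, so this is a defect in the statement rather than in your route.
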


\begin{proof}
{	Based on the above discussion,   (\ref{thm_inq_1}) holds if and only if the inequalities in (\ref{simply1_thm_inq_1}) are satisfied. For fixed $j\in\{2,\dots,n\}$, set
	\[
	f_j(\widehat\beta)=\frac{\lambda_j+\widehat\beta m_{j,k}}{\lambda_1+\widehat\beta m_{1,k}} .
	\]
	Then (\ref{simply1_thm_inq_1}) is equivalent to finding $\widehat\beta_k$ such that $\max\limits_{j\ge2}|f_j(\widehat\beta_k)|\le\delta$. Hence the infimum (sharp lower bound) of $\delta$ is
	\begin{equation*}
	\delta_{\min}=\min_{\widehat\beta}\,\max_{j\ge2}\,|f_j(\widehat\beta)|.
	\end{equation*}

	Next we will obtain the minimum value of $\delta$ in 5 steps. Throughout this proof, we assume $\lambda_1+\widehat\beta_k m_{1,k}\neq 0$.
	Otherwise the original expression is undefined.

	\emph{Step 1 (Rewriting $|f_j(\widehat\beta_k)|\le\delta$).} Note that $|f_j(\widehat\beta_k)|\le\delta$ can be rewritten as
\[
(\lambda_j+\widehat\beta_k m_{j,k})^2\le\delta^2(\lambda_1+\widehat\beta_k m_{1,k})^2.
\]
	Equivalently, (\ref{simply1_thm_inq_1}) can be transformed into the following quadratic inequality in $\widehat\beta_k$:
	\begin{equation}
\bigl[(m_{j,k}-\delta m_{1,k})\widehat\beta_k-(\delta\lambda_1-\lambda_j)\bigr]
\bigl[(m_{j,k}+\delta m_{1,k})\widehat\beta_k+(\lambda_j+\delta\lambda_1)\bigr]
\le 0.
\label{eq:m}
\end{equation}

\emph{Step 2 (Two critical values).} Without loss of generality, we assume that $m_{j,k}-\delta m_{1,k}\neq 0$
and $m_{j,k}+\delta m_{1,k}\neq 0$. A direct computation gives the zeros of the above
 two linear factors as follows:
\begin{equation*}
\widehat\beta^{+}_{j,k}=\frac{\delta\lambda_1-\lambda_j}{m_{j,k}-\delta m_{1,k}},
\qquad
\widehat\beta^{-}_{j,k}=-\frac{\lambda_j+\delta\lambda_1}{m_{j,k}+\delta m_{1,k}} .
\end{equation*}
Actually, a zero denominator corresponds to the limiting case $\widehat\beta^{+}_{j,k}\rightarrow \infty $ or $\widehat\beta^{-}_{j,k}\rightarrow \infty$.
It is easy to check that
\begin{equation*}
 f_j(\widehat\beta^{+}_{j,k})=\delta, \quad \mbox{and} \quad f_j(\widehat\beta^{-}_{j,k})=-\delta.
\end{equation*}

\emph{Step 3 (Admissible range of $\widehat\beta_k$).} The left side of (\ref{eq:m}) is a quadratic in $\widehat\beta_k$ with leading coefficient
\[
a=(m_{j,k}-\delta m_{1,k})(m_{j,k}+\delta m_{1,k})=m_{j,k}^{2}-\delta^{2}m_{1,k}^{2} .
\]
Hence (\ref{eq:m}) holds if and only if $\widehat\beta_k$ lies \textbf{between} the two critical values (a bounded interval, when $a>0$), or \textbf{outside} them (unbounded rays, when $a<0$):
\[
a>0:\quad \min\{\widehat\beta^{-}_{j,k},\widehat\beta^{+}_{j,k}\}\le\widehat\beta_k\le\max\{\widehat\beta^{-}_{j,k},\widehat\beta^{+}_{j,k}\};
\]
\[
a<0:\quad \widehat\beta_k\le\min\{\widehat\beta^{-}_{j,k},\widehat\beta^{+}_{j,k}\}\ \ \text{or}\ \ \widehat\beta_k\ge\max\{\widehat\beta^{-}_{j,k},\widehat\beta^{+}_{j,k}\}.
\]

\emph{Step 4 (Intersection over $j$: three cases).} If there exist constants $m_*, n_*$
(corresponding to $m_{j,k},m_{1,k}$), for which $a=m_*^2-\delta^2 n_*^2$ and the critical values become
\[
\widehat\beta^{+}_{j}=\frac{\delta\lambda_1-\lambda_j}{m_*-\delta n_*},
\qquad
\widehat\beta^{-}_{j}=-\frac{\lambda_j+\delta\lambda_1}{m_*+\delta n_*} .
\]

We distinguish the following three cases.

\textbf{Case A1 ($m_*>\delta|n_*|$, both denominators positive, bounded interval).} For each $j$, the feasible set is
\[
\bigl[\widehat\beta^{-}_{j},\widehat\beta^{+}_{j}\bigr]
=\Bigl[-\frac{\lambda_j+\delta\lambda_1}{m_*+\delta n_*},\ \frac{\delta\lambda_1-\lambda_j}{m_*-\delta n_*}\Bigr],
\]
which is nonempty ($\widehat\beta^{+}_{j}\geq\widehat\beta^{-}_{j}$) if and only if $\lambda_1 m_*\geq\lambda_j n_*$.
Obviously, both $\widehat\beta^{-}_{j}$ and $\widehat\beta^{+}_{j}$ decrease with respect to $\lambda_j$. Thus
the common interval on $\widehat\beta_k$ satisfying (\ref{eq:m}) for all $j$ is
\begin{equation}
-\frac{\lambda_n+\delta\lambda_1}{m_*+\delta n_*}\ \le\ \widehat\beta_k\ \le\ \frac{\delta\lambda_1-\lambda_2}{m_*-\delta n_*} .
\label{eq:common_interval}
\end{equation}

\textbf{Case A2 ($m_*<-\delta |n_*|$, both denominators negative, bounded interval).}  For each $j$ the feasible set is
\[
\bigl[\widehat\beta^{+}_{j},\widehat\beta^{-}_{j}\bigr]
=\Bigl[\frac{\delta\lambda_1-\lambda_j}{m_*-\delta n_*},\ -\frac{\lambda_j+\delta\lambda_1}{m_*+\delta n_*}\Bigr],
\]
which is nonempty ($\widehat\beta^{+}_{j}<\widehat\beta^{-}_{j}$) if and only if $\lambda_1 m_*<\lambda_j n_*$. Then both $\widehat\beta^{+}_{j}$ and $\widehat\beta^{-}_{j}$ increase
with respect to $\lambda_j$, so the common interval on $\widehat\beta_k$ satisfying (\ref{eq:m}) for all $j$ is
\begin{equation}
\frac{\delta\lambda_1-\lambda_2}{m_*-\delta n_*}\ \le\ \widehat\beta_k\ \le\ -\frac{\lambda_n+\delta\lambda_1}{m_*+\delta n_*} .
\label{eq:common_interval2}
\end{equation}

\textbf{Case B ($|m_*|<\delta|n_*|$, denominators of opposite signs, unbounded interval).} The feasible set consists of two rays extending to $\pm\infty$. Here $|f_j(\widehat\beta)|\to|m_*/n_*|<\delta$ as $\widehat\beta\to\pm\infty$, so this case arises only when $\delta>|m_*/n_*|$ and does not affect the minimal lower bound of $\delta$.

\emph{Step 5 (Minimal lower bound of $\delta$).} With the constants $m_* \neq 0$ and $n_*$, the function $f_j(\widehat\beta_k)$ is given as $f_j(\widehat\beta_k)=\dfrac{\lambda_j+\widehat\beta_k m_*}{\lambda_1+\widehat\beta_k n_*}$, whose maximum over $j$ is attained at $\lambda_2$ or $\lambda_n$, i.e.,
\[
\max_{j\ge2}|f_j(\widehat\beta_k)|=\max\bigl\{|f_2(\widehat\beta_k)|,\ |f_n(\widehat\beta_k)|\bigr\}.
\]
Hence the minimal lower bound of $\delta$ is attained when the two endpoint values are equal. Setting
  $f_2(\widehat\beta_k)=-f_n(\widehat\beta_k)$ gives the optimal value
\[
\widehat\beta_*=-\frac{\lambda_2+\lambda_n}{2m_*} .
\]
Substituting $\widehat\beta_*$ into $f_2$, we obtain the minimal lower bound of $\delta$ as
\[
\delta_{\min}=|f_2(\widehat\beta_*)|=\left|\frac{m_*(\lambda_2-\lambda_n)}{2\lambda_1 m_*-\lambda_n n_*-\lambda_2 n_*}\right| .
\]
This completes the proof.
}
\end{proof}

Similarly, the iteration sequence $\{\mu_k\}_{k=1}^\infty$ generated by the TPPM  can converge to
the eigenvalue $\lambda_n$ associated with the symmetric matrix $A\in \mathbb{R}^{n\times n}$,  with a convergence factor bounded by $\delta$. This result is precisely demonstrated in the following theorem.

\begin{theorem}
	\label{thm3}
	Suppose that the matrix $A\in \mathbb{R}^{n\times n}$ is symmetric and admits the spectral decomposition
	$$
	A = Q\text{diag}(\lambda_1,\lambda_2,\cdots,\lambda_n) Q^T,
	$$
	where the eigenvector matrix $Q = [q_1,q_2,\cdots,q_n]$ is orthogonal. Let $\{x_{k+1}\}_{k=1}^\infty$ and $\{\mu_{k+1}\}_{k=1}^\infty$ be the iteration sequences generated by the two-sweep parameterized power method, and define $\theta_{k+1}\in [0,\frac{\pi}{2}]$ by $\cos(\theta_{k+1}) = \vert q_n^T x_{k+1}\vert$. If $\cos(\theta_0)\neq 0$ and there exists a constant $\delta$, independent of the iteration index $k$, such that
	\begin{equation}
		\label{thm_inq_2}
		{\left\vert \frac{ \widehat f_{k}(\lambda_j) \widehat f_{k-1}(\lambda_n) }{ \widehat f_{k}(\lambda_n) \widehat f_{k-1}(\lambda_j)} \right\vert \le \delta<1},~ \forall j = 1, \dots, n-1,~k = 1,2,\dots \,
	\end{equation}
	then, for	$k = 1,2,3, \dots $, it holds that
	\begin{equation}\label{thm_conv1_1}
		\vert \sin (\theta_{k+1})\vert\le \tan(\theta_0) \max \left\{\left\vert \frac{\lambda_1}{\lambda_n}\right\vert, \left\vert\frac{\lambda_{n-1}}{\lambda_n}\right\vert \right\} \delta^{(k-1)}
	\end{equation}
	and
	\begin{equation}\label{thm_conv2_2}
		\vert \mu_{k+1}-\lambda_n\vert \le \tan(\theta_0)^2 \max \left\{\left\vert \frac{\lambda_1}{\lambda_n}\right\vert^2, \left\vert\frac{\lambda_{n-1}}{\lambda_n}\right\vert^2 \right\} \max\limits_{1\le j\le n-1}\vert\lambda_j-\lambda_n\vert \delta^{2(k-1)}.
	\end{equation}
	
	In particular, when
	$$
	\lambda_1 \ge \lambda_2 \ge \cdots \ge \lambda_{n-1} >\lambda_n,
	$$
	the inequalities in (\ref{thm_inq_2}) will hold for $k=2,3, \dots$, if there exits constants $m_*$ and $n_*$ such that
	
	$$
	 \left\vert \frac{m_* (\lambda_{n-1}-\lambda_1)}{2\lambda_n m_*-\lambda_1 n_* -\lambda_{n-1} n_*}\right\vert  \le \delta <1.
	$$
\end{theorem}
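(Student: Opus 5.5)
The plan is to mirror the proof of the preceding theorem for $(\lambda_1,q_1)$, with the roles of $\lambda_1$ and $\lambda_n$ exchanged, and then obtain the sufficient condition by the argument of Theorem~\ref{thm:para} applied to the reflected ordering.

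\emph{Setup.} Take $x_0=x_1$ and $\beta_0=\beta_1=0$ as before. By the lemma and (\ref{new-iterate}),
$x_{k+1}=A\widehat f_k(A)x_0/\Vert A\widehat f_k(A)x_0\Vert_2$.
Expand $x_0=\sum_j t_jq_j$ with $\sum_j t_j^2=1$. Then
\[
x_{k+1}=\frac{\sum_{j}\lambda_j\widehat f_k(\lambda_j)t_jq_j}{\bigl\Vert\sum_{j}\lambda_j\widehat f_k(\lambda_j)t_jq_j\bigr\Vert_2},
\]
and now $|t_n|=\cos(\theta_0)\neq0$. Hence
$\sin(\theta_{k+1})^2=\sum_{j\neq n}(t_j\lambda_j\widehat f_k(\lambda_j))^2\big/\sum_{j}(t_j\lambda_j\widehat f_k(\lambda_j))^2$.
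Bounding the denominator below by the $j=n$ term gives
\[
\sin(\theta_{k+1})^2\le\frac{1}{t_n^2}\sum_{j\neq n}t_j^2\frac{\lambda_j^2}{\lambda_n^2}\Bigl(\frac{\widehat f_k(\lambda_j)}{\widehat f_k(\lambda_n)}\Bigr)^2.
\]
We need $\lambda_n\neq0$ here, which is implicit in the statement.

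\emph{Telescoping.} Write $\widehat f_k(\lambda_j)/\widehat f_k(\lambda_n)$ as the product over $i=2,\dots,k$ of $\frac{\widehat f_{i}(\lambda_j)\widehat f_{i-1}(\lambda_n)}{\widehat f_{i}(\lambda_n)\widehat f_{i-1}(\lambda_j)}$, using $\widehat f_1\equiv1$. Each factor is at most $\delta$ by (\ref{thm_inq_2}), so the ratio is bounded by $\delta^{k-1}$. The factor $|\lambda_j/\lambda_n|$ for $j\le n-1$ is bounded by $\max\{|\lambda_1/\lambda_n|,|\lambda_{n-1}/\lambda_n|\}$. This is because $|\lambda_j|$ over $\lambda_{n-1}\ge\lambda_j\ge\lambda_1$-type ranges is maximized at an endpoint: a convex function of $\lambda_j$ on $[\lambda_{n-1},\lambda_1]$. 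Using $\sum_{j\neq n}t_j^2=1-t_n^2$ and $(1-t_n^2)/t_n^2=\tan^2\theta_0$ gives (\ref{thm_conv1_1}). For (\ref{thm_conv2_2}), I will write
\[
\mu_{k+1}-\lambda_n=\frac{\sum_{j\neq n}(\lambda_j-\lambda_n)(t_j\lambda_j\widehat f_k(\lambda_j))^2}{\sum_j(t_j\lambda_j\widehat f_k(\lambda_j))^2},
\]
bound it by $\max_{j\le n-1}|\lambda_j-\lambda_n|\sin^2(\theta_{k+1})$, and insert the sine bound.

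\emph{Parameter condition.} Substitute the recursion (\ref{recursion_lambda}) into (\ref{thm_inq_2}) to get the analogue of (\ref{simply1_thm_inq_1}):
\[
\Bigl|\frac{\lambda_j+\widehat\beta_k m_{j,k}}{\lambda_n+\widehat\beta_k m_{n,k}}\Bigr|\le\delta,\qquad j=1,\dots,n-1.
\]
Then replace $m_{j,k}$ and $m_{n,k}$ by the constants $m_*$ and $n_*$, as in Step~4 of Theorem~\ref{thm:para}. Consider $g_j(\widehat\beta)=(\lambda_j+\widehat\beta m_*)/(\lambda_n+\widehat\beta n_*)$. It is affine in $\lambda_j$, so its maximum modulus over $j\le n-1$ is attained at $\lambda_{n-1}$ or $\lambda_1$. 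Equalizing $g_{n-1}(\widehat\beta)=-g_1(\widehat\beta)$ gives $\widehat\beta_*=-(\lambda_1+\lambda_{n-1})/(2m_*)$. Substituting this back yields
\[
\bigl|g_{n-1}(\widehat\beta_*)\bigr|=\Bigl|\frac{m_*(\lambda_{n-1}-\lambda_1)}{2\lambda_nm_*-\lambda_1n_*-\lambda_{n-1}n_*}\Bigr|.
\]
So if this quantity is at most $\delta<1$, the choice $\widehat\beta_k=\widehat\beta_*$ makes (\ref{thm_inq_2}) hold for $k\ge2$. The case analysis (A1, A2, B) carries over with the inequalities reversed, since $\lambda_n$ is now the smallest eigenvalue. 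The feasible interval endpoints are monotone in $\lambda_j$ in the opposite direction, so the binding constraints come from $\lambda_1$ and $\lambda_{n-1}$.

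\emph{Main obstacle.} The delicate point is this last step. Replacing the $k$-dependent ratios $m_{j,k}$ by the fixed constants $m_*,n_*$ is justified only in the same heuristic sense as in Theorem~\ref{thm:para}. I will adopt exactly that convention rather than attempt a sharper argument. I also need to check that the denominator $\lambda_n+\widehat\beta_* n_*$ is nonzero; this follows because the displayed bound is finite and less than $1$.
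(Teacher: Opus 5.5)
Your proposal takes the same route the paper intends when it says ``similarly'' for this theorem. It reruns the Rayleigh-quotient and telescoping argument of the $(\lambda_1,q_1)$ convergence theorem with $\lambda_n$ in place of $\lambda_1$, then applies the endpoint equalization of Theorem~\ref{thm:para} under the same frozen-constant convention ($m_{j,k}\to m_*$, $m_{n,k}\to n_*$) to obtain $\widehat\beta_*=-(\lambda_1+\lambda_{n-1})/(2m_*)$, so it is sound to the same (heuristic) standard as the paper. One off-by-one is inherited from the statement itself: substituting (\ref{recursion_lambda}) into (\ref{thm_inq_2}) produces $\widehat\beta_{k-1}$ and $m_{j,k-1}$, not $\widehat\beta_k$ and $m_{j,k}$, so the $k=2$ instance of (\ref{thm_inq_2}) reduces to the parameter-free condition $|\lambda_j/\lambda_n|\le\delta$ (because $\widehat\beta_1=0$), which the displayed condition on $m_*,n_*$ does not imply, and the choice $\widehat\beta_{k-1}=\widehat\beta_*$ secures only $k\ge3$.
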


\begin{remark}\label{remark:convergence}
	According to \cite{MC2013} and \cite{PPM2021}, we compare the convergence of the methods PM, PPM and TPPM as follows:
\begin{itemize}
		\item  For the case that $\vert \lambda_1 \vert > \vert \lambda_n \vert$, the methods converge to the eigenpair $(\lambda_1, q_1)$.   For $\frac{n_*}{m_*} \le 1$, it holds that
		$$
		\max \left\{ \left\vert \frac{\lambda_2}{\lambda_1}\right\vert,\left\vert \frac{\lambda_n}{\lambda_1}\right\vert  \right\} \ge \frac{\lambda_2-\lambda_n}{2\lambda_1-\lambda_2-\lambda_n} \ge  \left \vert \frac{m_* (\lambda_2-\lambda_n)}{2\lambda_1 m_*-\lambda_n n_* -\lambda_2 n_*} \right\vert.
		$$
		\item  For the case that $\vert \lambda_n \vert > \vert \lambda_1 \vert$, the methods converge to the eigenpair $(\lambda_n, q_n)$, if $\frac{n_*}{m_*} \le 1$, then
		$$
		\max \left\{ \left\vert \frac{\lambda_1}{\lambda_n}\right\vert,\left\vert \frac{\lambda_{n-1}}{\lambda_n}\right\vert  \right\} \ge \frac{\lambda_{n-1}-\lambda_1}{2\lambda_n-\lambda_1-\lambda_{n-1}} \ge  \left \vert \frac{m_* (\lambda_{n-1}-\lambda_1)}{2\lambda_n m_*-\lambda_1 n_* -\lambda_{n-1} n_*} \right \vert.
		$$
\end{itemize}

 Based on the above observations, the convergence factor of our proposed method has the smallest upper bound, which implies
the fastest convergence rate.
\end{remark}

\section{The parameter selection method for the TPPM based on GPR}\label{sec:TPPMGPR}
In this section, we give a selection strategy for quasi-optimal parameters of the TPPM.

In the PPM, Bai et. al \cite{PPM2021}  employ $\frac{1}{n-1}(\text{tr}(A)-\mu_k)$ to approximate  $\frac{\lambda_2 + \lambda_n}{2}$ and $\frac{\lambda_1 + \lambda_{n-1}}{2}$. This approximation relies on the assumption that the eigenvalues $\lambda_j ~ (j = 2,3,\dots,n)$ or $\lambda_j ~ (j = 1,2,\dots,n-1)$ are distributed almost uniformly along the real axis. However, discrepancies may arise between $\frac{\lambda_2 + \lambda_n}{2}$ (or $\frac{\lambda_1 + \lambda_{n-1}}{2}$) and $\frac{1}{n-1}(\text{tr}(A)-\mu_k)$ for non-uniform eigenvalue distribution.

Instead, we introduce a constant $p$ and use $\frac{p}{n-1}(\text{tr}(A)-\mu_k)$ to approximate $\frac{\lambda_2 + \lambda_n}{2}$ and $\frac{\lambda_1 + \lambda_{n-1}}{2}$  for the TPPM. Moreover, we can predict $p$ by making use of GPR.

\subsection{Gaussian process regression (GPR)}

GPR can be successfully applied to the parameter selection of algorithms~\cite{GADI2022}, as it effectively captures nonlinear relationships between data points through kernel functions. In this subsection, we will review the basic knowledge of GPR.

The objective of GPR is to identify a mapping function $f: \mathbb{R} \mapsto \mathbb{R}$ that correlates the input vector $\boldsymbol{n} = (n_1 ,n_2, \dots, n_k)$ with the corresponding output vector $\Phi = (\phi_1,\phi_2, \dots, \phi_k)$. We  can estimate the output value $\phi^*$ for a new input $n^*$ by predicting $\phi^* = f(n^*)+\epsilon$, where $\epsilon$ is additive noise. Further, we assume that $\epsilon$ follows an independent, identically distributed Gaussian distribution with zero mean and variance $\sigma^2_\epsilon$, i.e. $\epsilon \thicksim \mathcal{N}(0,\sigma^2_\epsilon)$.  In actual application, we choose $\sigma^2_\epsilon = \sqrt{\sum\limits_{i=1}^k(\phi_i-\bar{\phi})\bigg/2(k-1)}$, with $\bar{\phi} = \sum\limits_{i=1}^k{\phi_i} /k$.

\begin{definition}[\cite{book:GPR}]
	A Gaussian process (GP) is a collection of random variables, any finite number of which have a joint Gaussian distribution.
\end{definition}

Assume that $\phi_i$ (with respect to $n_i$) obeys the GP. Then $f(\boldsymbol{n}) = [f(n_1),f(n_2),\dots, f(n_k)]$ obeys the $k$-dimensional Gaussian distribution
$$
\begin{bmatrix}f(n_1)\\\vdots\\f(n_k)\end{bmatrix}\thicksim \mathcal{N} \left(\begin{bmatrix}\boldsymbol{\mu}(n_1)\\\vdots\\ \boldsymbol{\mu}(n_k)\end{bmatrix},\begin{bmatrix}\kappa(n_1,n_1)&\cdots&\kappa(n_1,n_k)\\\vdots&\ddots&\vdots\\ \kappa(n_k,n_1)&\cdots&\kappa(n_k,n_k)\end{bmatrix}\right),
$$
where $\boldsymbol{\mu}(\cdot)$ is mean function and is usually set to 0, $\kappa(\cdot,\cdot)$ is covariance function. Let $C = (c_{ij})$, $c_{ij} = \kappa(n_i,n_j)$

The procedure for GPR is outlined as follows:

\begin{itemize}
	\item Given a training set $(\boldsymbol{n}, \Phi )$, choose a kernel function for predictions with $(\boldsymbol{n}^*, \Phi^* )$.
	\item Calculate the prior. By choosing the kernel function, $\Phi \thicksim \mathcal{N} (0,\kappa(\boldsymbol{n},\boldsymbol{n}))$, then the joint prior distribution of observed values $\Phi$ and prediction values $\Phi^*$ become
	$$
	\begin{bmatrix}\Phi \\ \Phi^*\end{bmatrix}\thicksim \mathcal{N} \left(\begin{bmatrix} 0 \\ 0 \end{bmatrix},\begin{bmatrix} C+\sigma^2_\epsilon I&\kappa(\boldsymbol{n},\boldsymbol{n}^*)\\ \kappa(\boldsymbol{n}^*,\boldsymbol{n})& \kappa(\boldsymbol{n}^*,\boldsymbol{n}^*)\end{bmatrix}\right).
	$$
	\item Calculate the hyperparameter. The kernel function usually contains hyperparameters $\boldsymbol{\theta}$, which will affect the prediction results of GPR. They are estimated by maximizing the logarithmic marginal likelihood function:
	$$
	\log p(\Phi |\boldsymbol{n}, \boldsymbol{\theta}) = -\frac{k}{2} \log 2\pi -\frac{1}{2} \log(\vert C +\sigma^2_\epsilon I\vert) - \frac{1}{2}\Phi^T {(C+\sigma^2_\epsilon I)}^{-1} \Phi.
	$$
	\item Calculate the posterior and output the predicted values. Using the Bayesian formula $p(\Phi^*| \Phi) = \frac{p(\Phi | \Phi^*) p(\Phi^*)}{p(\Phi)}$, the joint posterior distribution of prediction is	$$\Phi^{*}|\boldsymbol{n},\Phi,\boldsymbol{n}_{*}\thicksim \mathcal{N} \left(\boldsymbol{\mu}_{*},\boldsymbol{\sigma}_{*}^{2}\right),$$
	where
	\begin{align*}
\boldsymbol{\mu}_{*}&=\kappa(\boldsymbol{n}^{*},\boldsymbol{n}){(C+\sigma^2_\epsilon I)}^{-1}\Phi,\\
\boldsymbol{\sigma}_{*}^{2}&=\kappa(\boldsymbol{n}^{*},\boldsymbol{n}^{*})-\kappa(\boldsymbol{n}^{*},\boldsymbol{n}){(C+\sigma^2_\epsilon I)}^{-1}\kappa(\boldsymbol{n},\boldsymbol{n}^{*}).
	\end{align*}
	In practice, we use the mean value of the above GP as its estimated value, i.e. $\Phi^* = \boldsymbol{\mu}_{*}$.
\end{itemize}

More detailed information about the GPR can be found in \cite{book:GPR, GADI2022, book:Bayesian}.

\subsection{TPPM with gaussian process regression parameter prediction}
In this subsection, we will present a parameter prediction method for the TPPM based on GPR.

From Theorem \ref{thm:para}, when $\delta = \left \vert \frac{m_* (\lambda_2-\lambda_n)}{2\lambda_1 m_*-\lambda_n n_* -\lambda_2 n_*} \right\vert$, i.e., the lower bound for $\delta$, {the effective parameter $\widehat\beta_k$} is chosen as follows:
\begin{equation*}
	{\widehat\beta_k} = -\frac{\lambda_n+\delta \lambda_1}{m_*+\delta n_*} =  \frac{\delta \lambda_1 -\lambda_2}{m_*-\delta n_*} =  -\frac{\lambda_2+\lambda_n}{2m_*}
\end{equation*}
or
\begin{equation*}
	{\widehat\beta_k} = \frac{\delta \lambda_1 -\lambda_n}{m_*-\delta n_*} =   -\frac{\lambda_2+\delta \lambda_1}{m_*+\delta n_*}  = -\frac{\lambda_2+\lambda_n}{2m_*},
\end{equation*}
where $k = 2,3,\dots$.

 Similarly, in Theorem \ref{thm3}, when $\delta = \left \vert \frac{m_* (\lambda_{n-1}-\lambda_1)}{2\lambda_n m_*-\lambda_1 n_* -\lambda_{n-1} n_*} \right \vert$, {$\widehat\beta_{k}$} is chosen as
{$\widehat\beta_{k}$} $= -\frac{\lambda_1+\lambda_{n-1}}{2m_*}$.
Based on both Theorems \ref{thm:para} and \ref{thm3}, we may take {the effective parameter $\widehat\beta_k$} such that
\[
{\widehat\beta_k} = -\frac{\lambda_2+\lambda_n}{2m_*}
\quad \text{or} \quad
{\widehat\beta_{k}} = -\frac{\lambda_1+\lambda_{n-1}}{2m_*}
\]
 when computing the extreme eigenpairs $(\lambda_1,~q_1)$ and $(\lambda_n,~q_n)$. Notice that $\lambda_j (j=1, 2, n-1, n)$  are unknown.
On the other hand,  $\frac{\lambda_2 + \lambda_n}{2}$ and $\frac{\lambda_1 + \lambda_{n-1}}{2}$ can be approximated by
	$$
	\frac{1}{2} (\lambda_2 + \lambda_n) \approx  \frac{p}{n-1}(\text{tr}(A)-\mu_k)\quad
	\text{or}\quad \frac{1}{2} (\lambda_1 + \lambda_{n-1}) \approx   \frac{p}{n-1}(\text{tr}(A)-\mu_k),
	$$
where $p$ is a modified constant. We will further demonstrate the necessity of introducing $p$ via numerical experiments in Example \ref{exm:fixed-spectrum} of Section \ref{sec:4}. Motivated by the form of the effective parameter above, the algorithmic parameter $\beta_k$ is taken as the following empirical surrogate:
	$$\beta_k = \frac{\widehat\beta_{k}}{s_{k+1}} = \frac{p (\mu_{k} -\text{tr}(A))}{m_*s_{k+1}(n-1)}.$$
	 For $\frac{p}{m_*}$, we denote $1/\phi:=\frac{p}{m_*}$ and employ GPR to predict it.

From the above discussion, we give a version of the TPPM based on GPR, which is described in Method \ref{TPPM_GPR}.
It is noted that the chosen parameter $\beta_k$ is independent of $n^*$.


\begin{algorithm}
	\floatname{algorithm}{Method}
	\caption{TPPM based on the GPR}
	\label{TPPM_GPR}
	\begin{algorithmic}[1]
		\Require{$A$, $k_{\max}$, $\varepsilon$, $x_0$, $x_1$ with
		$\Vert x_0\Vert_2=\Vert x_1\Vert_2=1$, and $\beta_0 = \beta_1 = 0$,
		$\phi$ obtained by GPR.}
		\Ensure{$x_k$ and $\mu_k$.}

		\State{$\mu_0=x_0^TAx_0$,~$r_0=Ax_0-\mu_0x_0$;}

		\State{$\mu_1=x_1^TAx_1$,~$r_1=Ax_1-\mu_1x_1$;}
		\State{$\mathrm{RRES}_1=
		\Vert r_1\Vert_2/\Vert r_0\Vert_2$;}
		\State{$k=1$;}

		\While{$k\leq k_{\max}$ \textbf{and}
		$\mathrm{RRES}_k>\varepsilon$}
			\State{$y_{k+1}
			=\beta_{k-1}x_{k-1}+Ax_k$;}
			\State{$s_{k+1} = \Vert y_{k+1}\Vert_2$;}
			\State{$x_{k+1}
			=y_{k+1}/\Vert y_{k+1}\Vert_2$;}
			\State{$\mu_{k+1}
			=x_{k+1}^TAx_{k+1}$;}
			\State{$\beta_{k+1} =\frac{  \mu_{k+1} -\text{tr}(A)}{\phi s_{k+1}(n-1)}$; }
			\State{$r_{k+1}
			=Ax_{k+1}-\mu_{k+1}x_{k+1}$;}
			\State{$\mathrm{RRES}_{k+1}
			=\Vert r_{k+1}\Vert_2/\Vert r_0\Vert_2$;}
			\State{$k=k+1$;}
		\EndWhile
	\end{algorithmic}
\end{algorithm}

\section{Numerical results}\label{sec:4}

In this section, for symmetric matrices, we compare the TPPM with the PM and PPM. For symmetric positive definite matrices, we  also test the APM. The parameters of the PPM are set the same as in \cite{PPM2021}, where $\omega_k = \frac{n-1}{n\mu_k-\text{tr}(A)} $, $k = 0,1,\dots$, and the momentum parameter in the APM is selected according to the Best Heavy Ball method in \cite{SPowerIteration_pca2018}.
 We denote the number of iteration steps by ``IT" and the computing time in seconds by ``CPU".
We also report the speed-up of the TPPM against the PM, PPM and APM, which are defined as
\begin{align*}
	\text{speed-up}_{\text{PM}} &= \frac{\text{CPU of PM}}{\text{CPU of TPPM}},\\
	\text{speed-up}_{\text{PPM}} &= \frac{\text{CPU of PPM}}{\text{CPU of TPPM}},\\
	\text{speed-up}_{\text{APM}} &= \frac{\text{CPU of APM}}{\text{CPU of TPPM}}.
\end{align*}

In Examples \ref{exm:fixed-spectrum} $\sim$ \ref{exm:Wishart}, the tests are started from the initial vector $x_0 = \frac{e}{\Vert e \Vert_2}$, where $e = (1,1,\cdots, 1)^T$, and terminated once the current relative residual, defined by
$$
\text{RRES} = \frac{\Vert r_k\Vert_2}{\Vert r_0\Vert_2} < 10^{-6},
$$
 or once the number of iteration steps exceeds 500000, where $r_k = Ax_k - \mu_k x_k$.

All experiments are carried out using MATLAB (version R2022a)
on a Mac laptop with Apple M1 core and 8G memory.

\begin{example}
\label{exm:fixed-spectrum}
Consider the symmetric eigenvalue problem $Aq=\lambda q$, where
\[
 A=Q\Lambda Q^T,\qquad
 \Lambda=\operatorname{diag}(\lambda_1,\ldots,\lambda_n),
\]
and $Q$ is an orthogonal matrix obtained by a QR factorization of a random matrix generated by the MATLAB function \textbf{randn}. The eigenvalues are prescribed as
\[
 \lambda_i=8-\frac{17.8(i-1)}{n-2},\quad i=1,\ldots,n-1,
 \qquad \lambda_n=-10.
\]
The first $n-1$ eigenvalues are equally spaced in $[-9.8,8]$. For this matrix, the unique eigenvalue of largest modulus is $-10$, which is also the smallest eigenvalue. We consider $n=1000$, $2000$, $3000$, $4000$, and $5000$.
\end{example}

In this example, we compare the PM, PPM, and TPPM with the admissibly randomized coordinate descent (ARCD) method~\cite{ARCD2025} and the steepest coordinate descent (SCD) method~\cite{SCD2025}. Both ARCD and SCD are designed to compute the smallest eigenpair by minimizing the Rayleigh quotient. Since the smallest eigenvalue also has the largest modulus in this example, all methods target the same eigenpair. For ARCD, we set $\eta=1/\sqrt{n}$ and $\alpha=1/2$. For SCD, at each step, we select the coordinate corresponding to the largest absolute entry of the current residual and minimize the Rayleigh quotient along that direction.

To investigate the influence of \(\phi\) on the TPPM, we compare the method with parameter correction (\(\phi = -0.038\)) with the  TPPM without parameter correction (\(\phi = 1\)). The value \(\phi = -0.038\) is obtained by averaging the optimal parameters from grid searches at \(n = 100, 200, 300, 400, 500\). The value $\phi=-0.038$ was subsequently used for all larger test sizes $n=1000,2000,3000,4000,5000$, without further parameter tuning. Both parameter values remain fixed throughout each run.

\begin{table}[!htbp]
\centering
\captionsetup{labelfont={color=black},textfont={color=black}}
\caption{Iteration counts and computing times for Example~\ref{exm:fixed-spectrum}.}
\label{tab:fixed-spectrum}
\small
\setlength{\tabcolsep}{3pt}
\renewcommand{\arraystretch}{1.15}
\begin{tabular*}{\textwidth}{@{\extracolsep{\fill}}llrrrrrr@{}}
\toprule
$n$ & & PM & PPM & ARCD & SCD & $\mathrm{TPPM}(\phi=1)$ & $\mathrm{TPPM}(\phi=-0.038)$ \\
\midrule
\multirow{2}{*}{1000}
 & IT  & 536 & 734 & 153.52 & 159.96 & 546 & 84 \\
 & CPU & 0.0850 & 0.3133 & 1.8448 & 1.1253 & 0.0793 & 0.0156 \\
\addlinespace[4pt]
\multirow{2}{*}{2000}
 & IT  & 541 & 672 & 182.39 & 149.98 & 551 & 88 \\
 & CPU & 0.5197 & 2.0598 & 7.4362 & 3.7271 & 0.5320 & 0.0909 \\
\addlinespace[4pt]
\multirow{2}{*}{3000}
 & IT  & 515 & 663 & 171.86 & 166.20 & 524 & 80 \\
 & CPU & 1.1599 & 4.6835 & 16.2861 & 10.3882 & 1.1800 & 0.1820 \\
\addlinespace[4pt]
\multirow{2}{*}{4000}
 & IT  & 663 & 791 & 169.88 & 181.60 & 675 & 108 \\
 & CPU & 2.8296 & 10.3840 & 37.7072 & 25.3556 & 2.8595 & 0.4606 \\
\addlinespace[4pt]
\multirow{2}{*}{5000}
 & IT  & 577 & 695 & 161.85 & 165.45 & 588 & 89 \\
 & CPU & 3.7794 & 13.3217 & 56.1300 & 35.0413 & 3.8726 & 0.5971 \\
\bottomrule
\end{tabular*}

\smallskip
\begin{minipage}{\textwidth}
Following~\cite{ARCD2025,SCD2025}, IT for ARCD and SCD is the actual number of coordinate updates divided by $n$. For PM, PPM, and TPPM, IT is the number of full-vector iterations. 
\end{minipage}
\end{table}

Table~\ref{tab:fixed-spectrum} shows that the TPPM with $\phi=1$ had iteration counts and computing times close to those of the PM. With $\phi=-0.038$, the TPPM required 80 $\sim$ 108 iterations and achieved the shortest computing time at every tested size. It achieved speed-ups ranging from 5.45 to 6.37 over the PM, and from 20.08 to 25.73 over the PPM. Compared to the TPPM with \(\phi = 1\), using \(\phi = -0.038\) reduced the computing time by a factor of 5.08 to 6.49. SCD was faster than ARCD for all tested sizes, although both required more computing time than the TPPM with $\phi=-0.038$.

The methods differ in update granularity. The TPPM updates the full iterate through a matrix-vector product, whereas ARCD and SCD update a single coordinate at each step and maintain the matrix-vector product using the corresponding column of $A$. Although IT for the coordinate methods is normalized by $n$, the resulting counts do not imply equal computational costs across methods. The timing advantage observed here is specific to the tested dense matrices and implementations. The column-access structure of ARCD and SCD remains particularly relevant when the full matrix cannot be stored in memory, while individual columns can be accessed~\cite{ARCD2025,SCD2025}.  It is worth noting that, as a special case of PPM, PM should be matched or even outperformed by PPM with appropriate parameter selection. However, Table \ref{tab:fixed-spectrum} shows that PPM yields inferior performance compared to PM on the tested matrices with nonuniform eigenvalue distributions. This indicates that the uniform eigenvalue assumption inherent in its parameter selection strategy breaks down, yielding suboptimal parameters. This explains why we introduce a modified constant $p$  into the parameter selection expression of the TPPM to adapt to nonuniform eigenvalue distributions.

Overall, the above results demonstrate that the choice of $\phi$ significantly affects the convergence speed and computational efficiency of the TPPM. This motivates the use of GPR to predict suitable parameters for the subsequent examples based on training data from smaller problems.

We determine the relatively quasi-optimal $\phi$ by a traversing method for small-scale matrices and then apply the GPR method. Table \ref{Table:training set} gives the training sets for the upcoming numerical examples under consideration, and the quasi-optimal parameter $\phi$ is obtained by traversing the interval $[-5,5]$, with a step size of 0.01.

\begin{table}[h!]
 \caption{~The training sets used in the GPR algorithm for the TPPM} 
\label{Table:training set}
\centering
	\begin{tabularx}{0.9\textwidth}{@{}lCC@{}}
		\toprule
		& Example \ref{exm:generate} and \ref{exm:Wishart} & Example \ref{exm:3} \\
		\midrule
		Training Set &  \makecell[c]{$n : 3 \sim 45$, $\Delta n = 3$\\ $n : 50 \sim 100$, $\Delta n = 5$ \\ $n : 110 \sim 200$, $\Delta n = 10$\\ $n : 300 \sim 1000$, $\Delta n = 100$\\ $n : 1500 \sim 3000$, $\Delta n = 500$}    &   \makecell[c]{$N :4 \sim 20$, $\Delta N = 2$\\ $N :30 \sim 200$, $\Delta N = 10$}       \\
		\bottomrule
	\end{tabularx}
\end{table}

In the process of GPR, the automatic relevance determination rational quadratic kernel function is applied in Examples \ref{exm:generate} and \ref{exm:3}, which is defined as
$\kappa(x,y) = \sigma_f^2 \left(1+\frac{1}{2\alpha}\sum\limits_{m=1}^n \frac{(x(m)-y(m))^2}{\sigma_m^2} \right)$, where $\sigma_f$ is signal standard deviation, $\sigma_l$ separate length scale, $x(m)$ and $y(m)$ are the $m$-th elements of vectors $x$ and $y$, respectively.
For Example \ref{exm:Wishart}, we choose the exponential kernel function, which is defined as $\kappa(x,y) = \sigma_f^2 \exp\left(-\frac{\Vert x-y\Vert_2}{\sigma_l}\right)$, where $\sigma_l$ is  characteristic length scale.

\begin{example}[\cite{PPM2021}]
	\label{exm:generate}
	Consider the standard symmetric eigenvalue problem $Aq = \lambda q$ , where
	$$
	A = Q\Lambda Q^T
	$$
	with $Q \in \mathbb{R}^{n\times n}$ being an orthogonal matrix generated randomly by the MATLAB function \textbf{randn}, and
	$$
	\Lambda =\text{diag}(s_1,s_2\cdot2^{\gamma},\dots,s_n \cdot n^\gamma)\in \mathbb{R}^{n\times n}.
	$$
\end{example}

We choose $n = 5000, 10000$ and $15000$, and consider six cases of the parameters $\gamma$ and $s_i$, $i=1,2,\dots$:
\begin{table}[htbp]
	\centering
	\begin{tabular}{llllll}
		Case(I)   & ~$\gamma = 2$ and $s_i=1$, &\qquad &\qquad  & Case(II) &  ~$\gamma = 2.5$ and $s_i=1$,\\
		Case(III) & ~$\gamma = 3$ and $s_i=1$, &\qquad & \qquad & Case(IV) &  ~$\gamma = 2$ and $s_i=(-1)^i$,     \\
		Case(V)   & ~$\gamma = 2.5$ and $s_i=(-1)^i$,  &\qquad&\qquad  & Case(VI) & ~$\gamma = 3$ and $s_i=(-1)^i$.
	\end{tabular}
\end{table}

We list the parameter prediction results of Example \ref{exm:generate} in Table \ref{para:generate} where $\phi_1$, $\phi_2$ and $\phi_3$ correspond to the situations where $n$ is set to 5000, 10000, and 15000, respectively.


\begin{table}[h!]
	\centering
	\caption{~The parameters predicted by GPR of Example \ref{exm:generate}}
	\label{para:generate}
	\begin{tabular}{ccccccc}
		\toprule
		& Case(I) & Case(II) & Case(III) & Case(IV) & Case(V) & Case(VI) \\
		\midrule
		$\phi_1$ & 0.7418  & 0.6443   & 0.6021    & -0.7127  & -0.6213 & -0.5442  \\
		$\phi_2$ & 0.7490  & 0.6458   & 0.6018    & -0.6865  & -0.5981 & -0.5161  \\
		$\phi_3$ & 0.7497  & 0.6428   & 0.5987    & -0.6695  & -0.5830 & -0.5075  \\
		\bottomrule
	\end{tabular}
\end{table}

\begin{table}[htbp]
	\caption{~IT and CPU of the PM, PPM, and TPPM for Example \ref{exm:generate} }
	\renewcommand{\arraystretch}{0.5}
	\centering
	\label{Table:generate}
	\begin{tabular}{cccccccc}
		\toprule
		Case                   & $n$                      &   & PM & PPM & TPPM & $\text{Speed-up}_\text{PM}$             & $\text{Speed-up}_{\text{PPM}}$            \\ \midrule
		\multirow{7}{*}{Case(I)}
		& \multirow{2}{*}{5000} & IT    & 20972 &13981& 2125 & \multirow{2}{*}{ 9.9410} & \multirow{2}{*}{6.5412} \\
		&                        & CPU   &    113.3128 &  74.5599 &   11.3985 &                       &                       \\
		\cmidrule{2-8}
		& \multirow{2}{*}{10000} & IT    & 19695 &13130 &   2172 & \multirow{2}{*}{9.2063} & \multirow{2}{*}{6.0215} \\
		&                        & CPU   &   355.3271  &232.4057 & 38.5959 &                       &                       \\
		\cmidrule{2-8}
		& \multirow{2}{*}{15000} & IT    &47731 &31821 & 5284 & \multirow{2}{*}{ 8.7526 } & \multirow{2}{*}{ 5.7697} \\
		&                        & CPU    &  1.8261E+03 & 1.2038E+03&   208.6384 &   & \\
		\midrule
		\multirow{7}{*}{Case(II)}
		& \multirow{2}{*}{5000} & IT    &  17200  & 12285 &1937& \multirow{2}{*}{8.4446} & \multirow{2}{*}{ 6.0087} \\
		&                        & CPU   & 92.2156 &65.6153&  10.9200&                       &                       \\
		\cmidrule{2-8}
		& \multirow{2}{*}{10000} & IT    & 32504  &  23217 &3741& \multirow{2}{*}{8.6639} & \multirow{2}{*}{ 6.1546} \\
		&                        & CPU   & 577.0114 &409.8925& 66.5995&                       &                       \\
		\cmidrule{2-8}
		& \multirow{2}{*}{15000} & IT    &44743 & 31959& 4985 & \multirow{2}{*}{ 8.4600} & \multirow{2}{*}{ 6.1490} \\
		&                        & CPU    &1.7227E+03 &1.2521E+03 &   203.6254 &   & \\
		\midrule
		\multirow{7}{*}{Case(III)}
		& \multirow{2}{*}{5000} & IT    & 16848 &  12636 &  2858  & \multirow{2}{*}{ 5.8885} & \multirow{2}{*}{4.4140} \\
		&                        & CPU   &88.2581& 66.1572 &  14.9882&                       &                       \\
		\cmidrule{2-8}
		& \multirow{2}{*}{10000} & IT    &  22506 & 16880 &  3811 & \multirow{2}{*}{6.1653} & \multirow{2}{*}{4.4229} \\
		&                        & CPU   & 410.7740& 294.6869 &66.6270&                       &                       \\
		\cmidrule{2-8}
		& \multirow{2}{*}{15000} & IT    &30112& 22584 & 4970 & \multirow{2}{*}{6.4110} & \multirow{2}{*}{ 4.8582} \\
		&                        & CPU    &1.2115E+03&  918.0683 &   188.9716 &   & \\
		\midrule
		\multirow{7}{*}{Case(IV)}
		& \multirow{2}{*}{5000} & IT    &  11291 &   7527
		&  721 & \multirow{2}{*}{15.5272} & \multirow{2}{*}{ 10.3496 } \\
		&                        & CPU   & 59.2427 & 39.4879
		&3.8154&                       &                       \\
		\cmidrule{2-8}
		
		& \multirow{2}{*}{10000} & IT    &  36012 &  24008
		&  993 & \multirow{2}{*}{36.1765} & \multirow{2}{*}{23.0092} \\
		&                        & CPU   &681.2463& 433.2906
		&18.8312&                       &                       \\
		\cmidrule{2-8}
		& \multirow{2}{*}{15000} & IT    &40218&26812& 3293& \multirow{2}{*}{11.8799} & \multirow{2}{*}{8.6372} \\
		&                        & CPU    &1.5575E+03& 1.1324E+03 &   129.8475 &   & \\
		
		\midrule
		\multirow{7}{*}{Case(V)}
		& \multirow{2}{*}{5000} & IT    & 23515 & 16796 & 1886 & \multirow{2}{*}{ 12.4070} & \multirow{2}{*}{8.8137} \\
		&                        & CPU   &123.8467&   87.9785
		&9.9820&                       &                       \\
		\cmidrule{2-8}
		& \multirow{2}{*}{10000} & IT    & 36957 &26398& 1628& \multirow{2}{*}{ 22.9349} & \multirow{2}{*}{16.2449} \\
		&                        & CPU   &650.4672 & 460.7311
		& 28.3615&                       &                       \\
		\cmidrule{2-8}
		& \multirow{2}{*}{15000} & IT    &45808& 32720 &  809& \multirow{2}{*}{  54.2979} & \multirow{2}{*}{  38.9496} \\
		&                        & CPU    & 1.7219E+03& 1.2351E+03
		&  31.7112 &   & \\
		
		\midrule
		\multirow{7}{*}{Case(VI)}
		& \multirow{2}{*}{5000} & IT    &  12448 & 9336& 990 & \multirow{2}{*}{12.7676} & \multirow{2}{*}{9.6377} \\
		&                        & CPU   &  66.0865&  49.8860
		& 5.1761 &                       &                       \\
		\cmidrule{2-8}
		& \multirow{2}{*}{10000} & IT    & 28716 &21537& 836 & \multirow{2}{*}{34.9878} & \multirow{2}{*}{26.3510} \\
		&                        & CPU   &507.7937&  382.4433
		&14.5135&                       &                       \\
		\cmidrule{2-8}
		& \multirow{2}{*}{15000} & IT    &37898& 28424 & 999& \multirow{2}{*}{ 36.7257} & \multirow{2}{*}{27.3579} \\
		&                        & CPU    & 1.5242E+03&  1.1354E+03
		&    41.5017 &   & \\
		
		\bottomrule
		
	\end{tabular}
\end{table}

The numerical outcomes for the PM, PPM, and TPPM are summarized in Table \ref{Table:generate}. Figure \ref{Figure:generate} displays the parameter prediction curve with 95\% confidence region for each case and $\log_{10}(RRES)$ versus IT when $n=15000$. From Table \ref{Table:generate} and Figure \ref{Figure:generate}, the following observations and remarks can be given.
\begin{enumerate}[(1)]
	\item In all cases for Example \ref{exm:generate}, the TPPM is always superior to the PM and PPM in terms of the IT and CPU. Specifically, $\text{speed-up}_\text{PM}$ is at least 5.8885 and at most 54.2979; $\text{speed-up}_\text{PPM}$ is at least 4.4140 and at most 38.9496.
	\item Figure \ref{Figure:generate} displays that the regression curves of $\phi^*$ for each case effectively fit the training set and stabilizes as $n$ increases which shows that GPR can effectively find the intrinsic relationship between $n$ and $\phi$.
	It allows us to use GPR to predict quasi-optimal parameters by training small size problems.
	\item For the case $s_i = (-1)^i$, it can be observed that the TPPM is more
	competitive than the PM and PPM. It demonstrates the significant influence of the parameters on the TPPM.
\end{enumerate}

\begin{example}[\cite{PPM2021}]
	\label{exm:3}
	Consider the two-dimensional elliptic partial differential equation
	$$
	U(x,y)\frac{\partial^2 u}{\partial x^2} +V(x,y)\frac{\partial^2 u}{\partial y^2} =  u
	$$
	with the homogeneous Dirichlet boundary condition defined on the unit square domain $(x,y)\in (0,1)\times (0,1) \subset \mathbb{R}^2$. Using the five-point finite-difference formula to discretize this problem on
	an $N \times N$ grid with the meshsize being equal to $h=\frac{1}{N+1}$ on both $x$ and $y$ directions,  we obtain the standard symmetric eigenvalue problem $Au =  u$ with $A\in \mathbb{R}^{n\times n}$ and $n=N^2$.
\end{example}

We choose $N = 400$ and $600$, then $n = N^2 = 160000$ and $360000$, and consider two cases of the coefficient functions $U(x,y)$ and $V(x,y)$ :
\begin{table}[htbp]
	\centering
	\begin{tabular}{lllll}
		Case(I)   &  $U(x,y) = V(x,y) = e^{xy}$, &\qquad   & Case(II) &  $U(x,y) = V(x,y) =  \frac{\sin(1+x+y)}{1+x+y}$ .
	\end{tabular}
\end{table}

\begin{figure}[h!]
	\centering
	\begin{subfigure}[t]{0.3\textwidth}
		\centering
		\includegraphics[width=1\textwidth,height=0.65\textwidth]{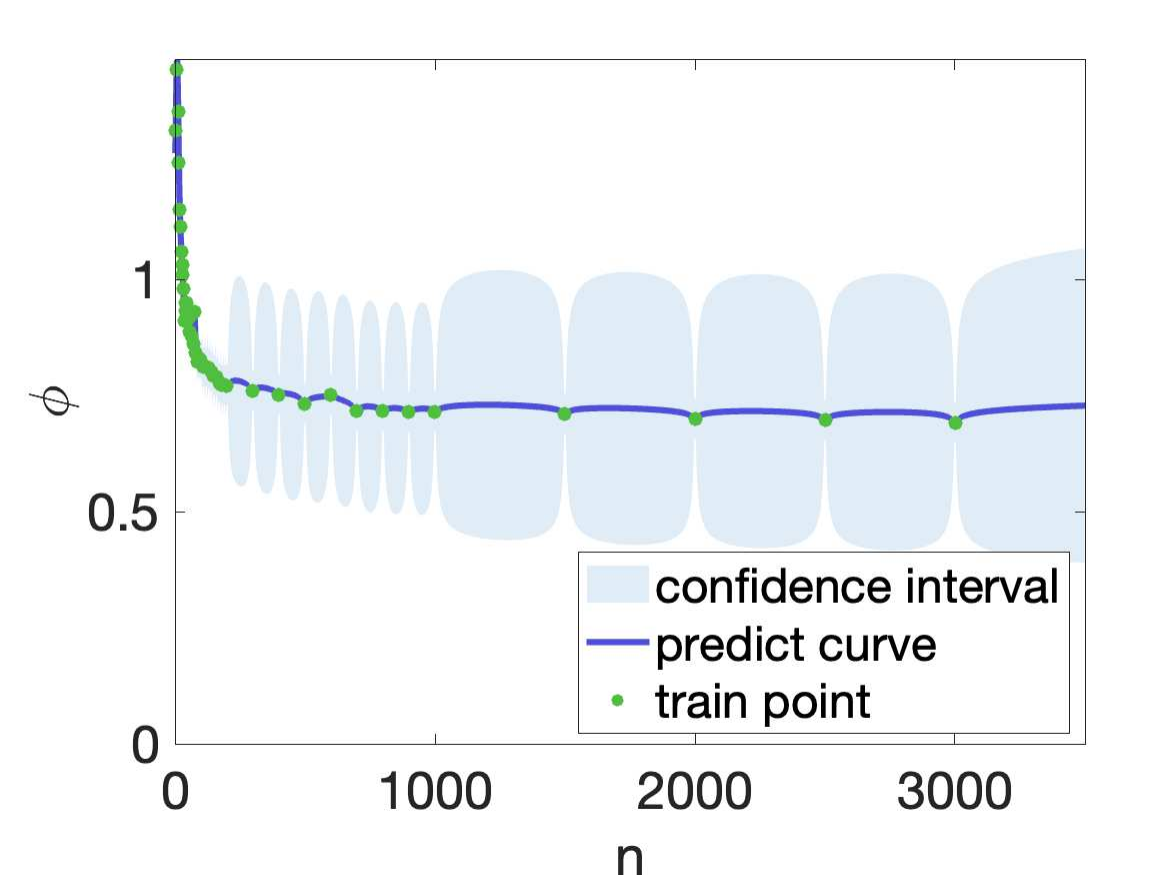}
		\subcaption*{Case(I)}
	\end{subfigure}
	\begin{subfigure}[t]{0.3\textwidth}
		\centering
		\includegraphics[width=1\textwidth,height=0.65\textwidth]{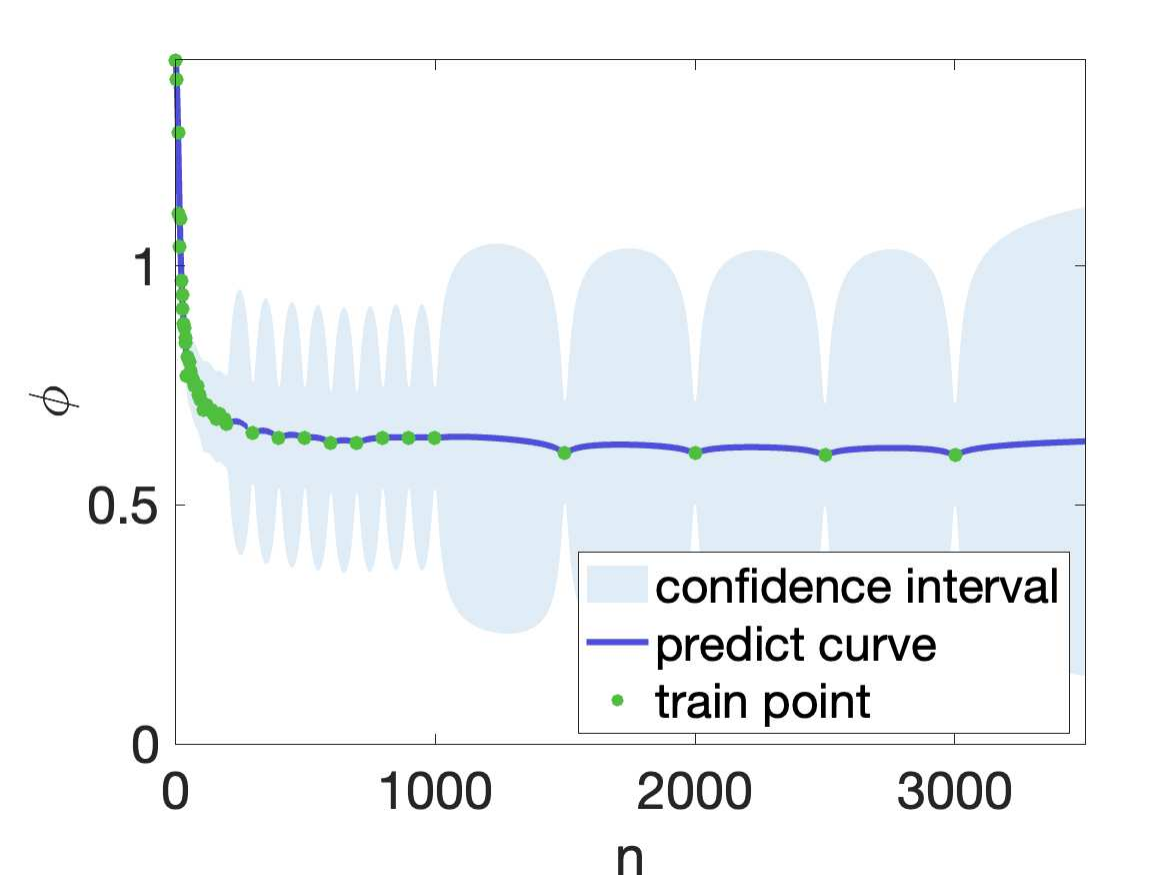}
		\subcaption*{Case(II)}
	\end{subfigure}
	\begin{subfigure}[t]{0.3\textwidth}
		\centering
		\includegraphics[width=1\textwidth,height=0.65\textwidth]{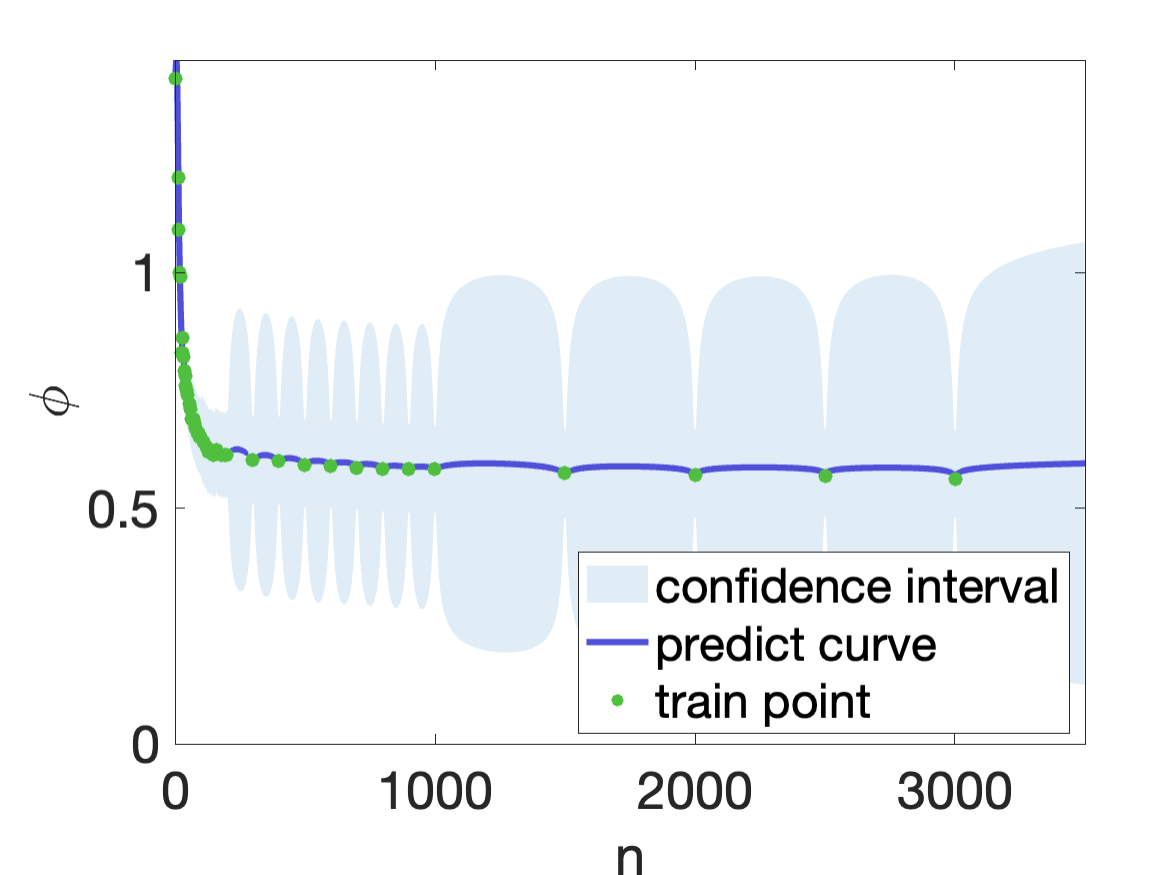}
		\subcaption*{Case(III)}
	\end{subfigure}
	
	\begin{subfigure}[t]{0.3\textwidth}
		\centering
		\includegraphics[width=1\textwidth,height=0.65\textwidth]{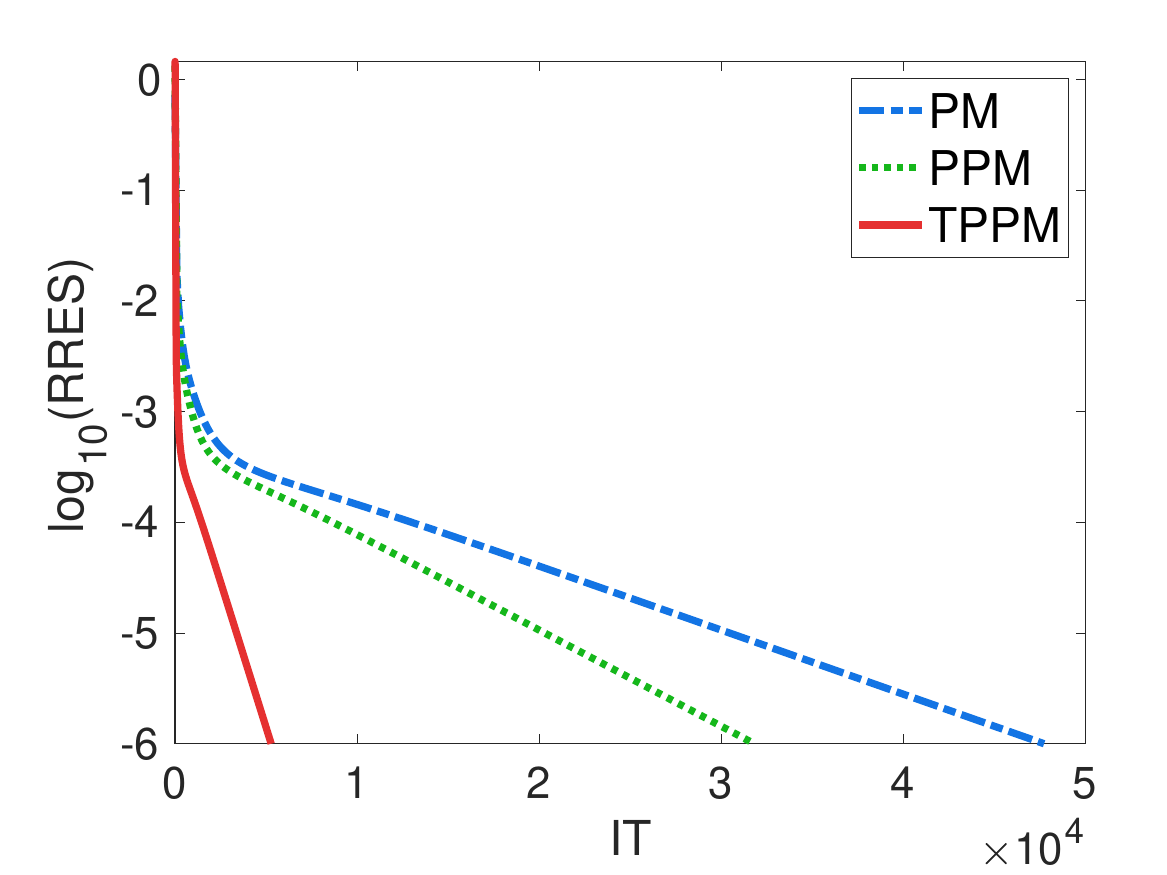}
		\subcaption*{Case(I) $n=15000$}
	\end{subfigure}
	\begin{subfigure}[t]{0.3\textwidth}
		\centering
		\includegraphics[width=1\textwidth,height=0.65\textwidth]{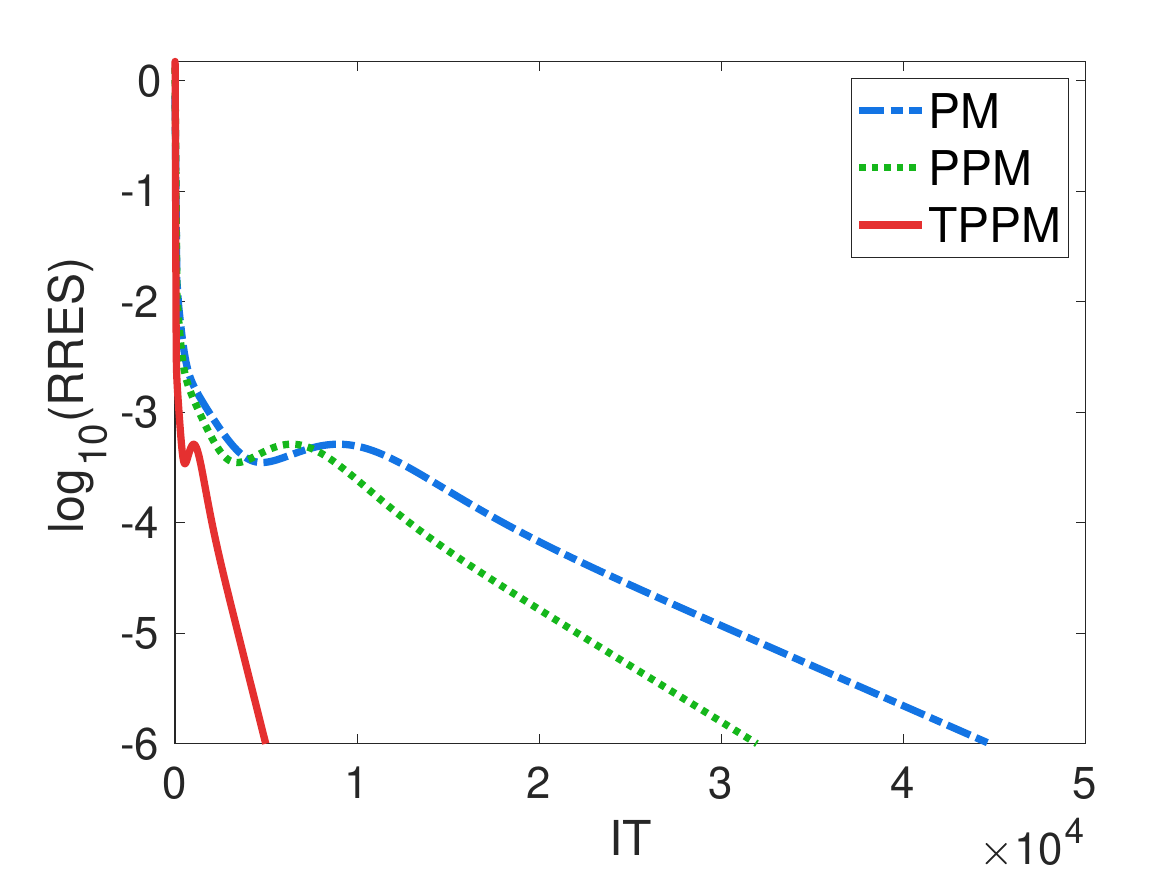}
		\subcaption*{Case(II) $n=15000$}
	\end{subfigure}
	\begin{subfigure}[t]{0.3\textwidth}
		\centering
		\includegraphics[width=1\textwidth,height=0.65\textwidth]{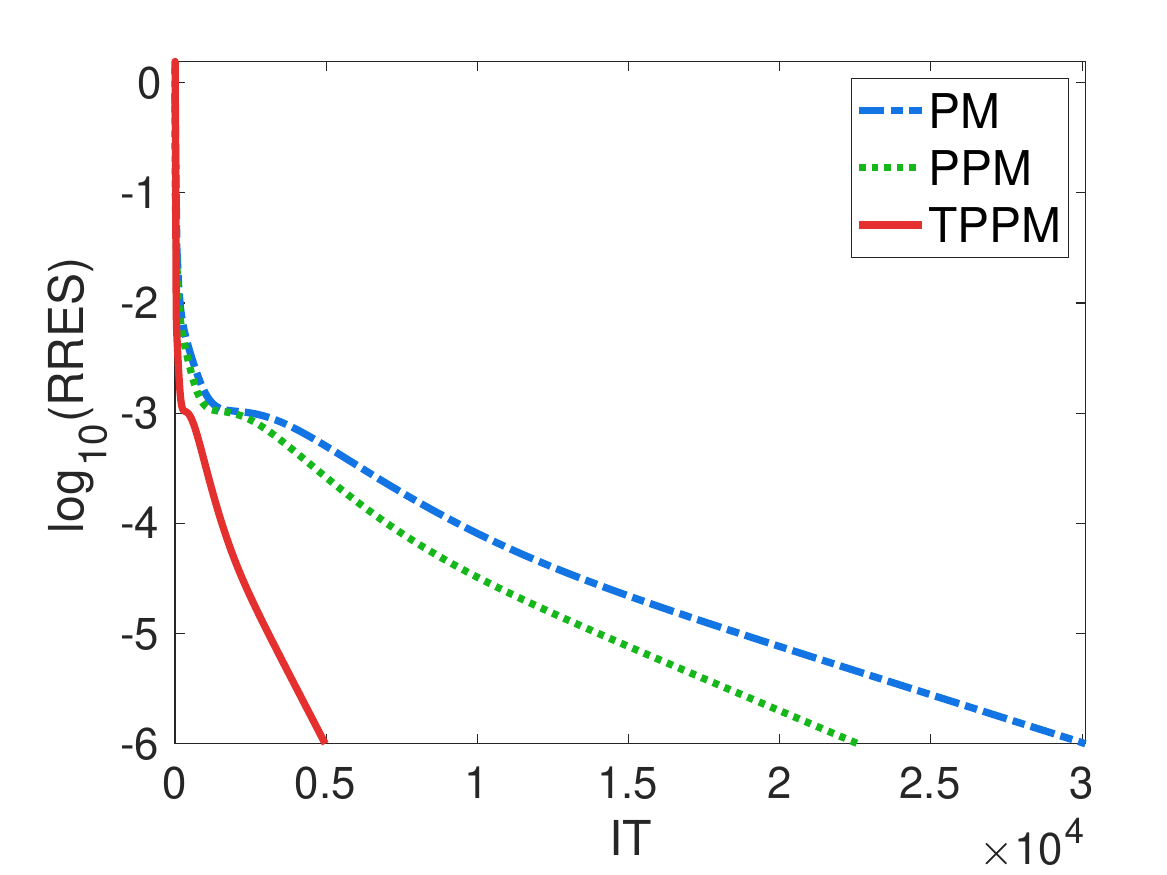}
		\subcaption*{Case(III) $n=15000$}
	\end{subfigure}
	
	\begin{subfigure}[t]{0.3\textwidth}
		\centering
		\includegraphics[width=1\textwidth,height=0.65\textwidth]{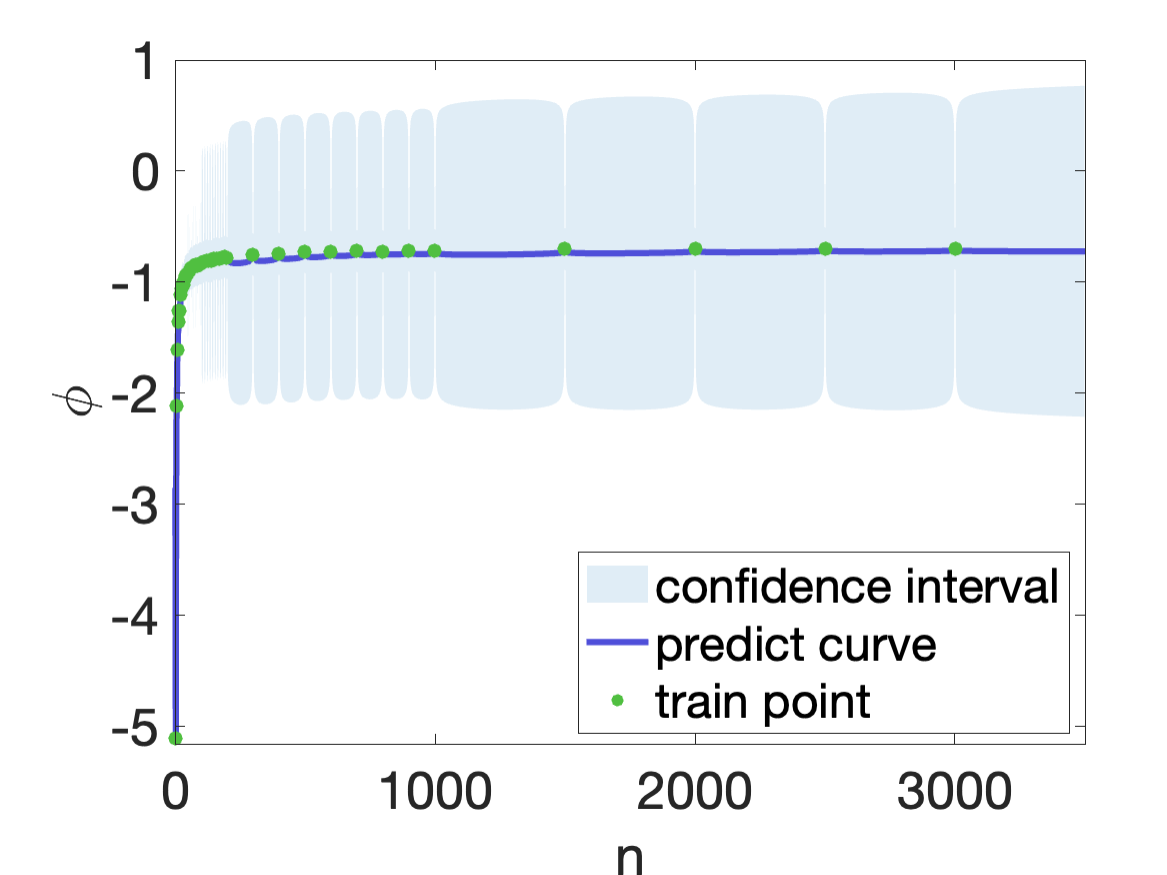}
		\subcaption*{Case(IV)}
	\end{subfigure}
	\begin{subfigure}[t]{0.3\textwidth}
		\centering
		\includegraphics[width=1\textwidth,height=0.65\textwidth]{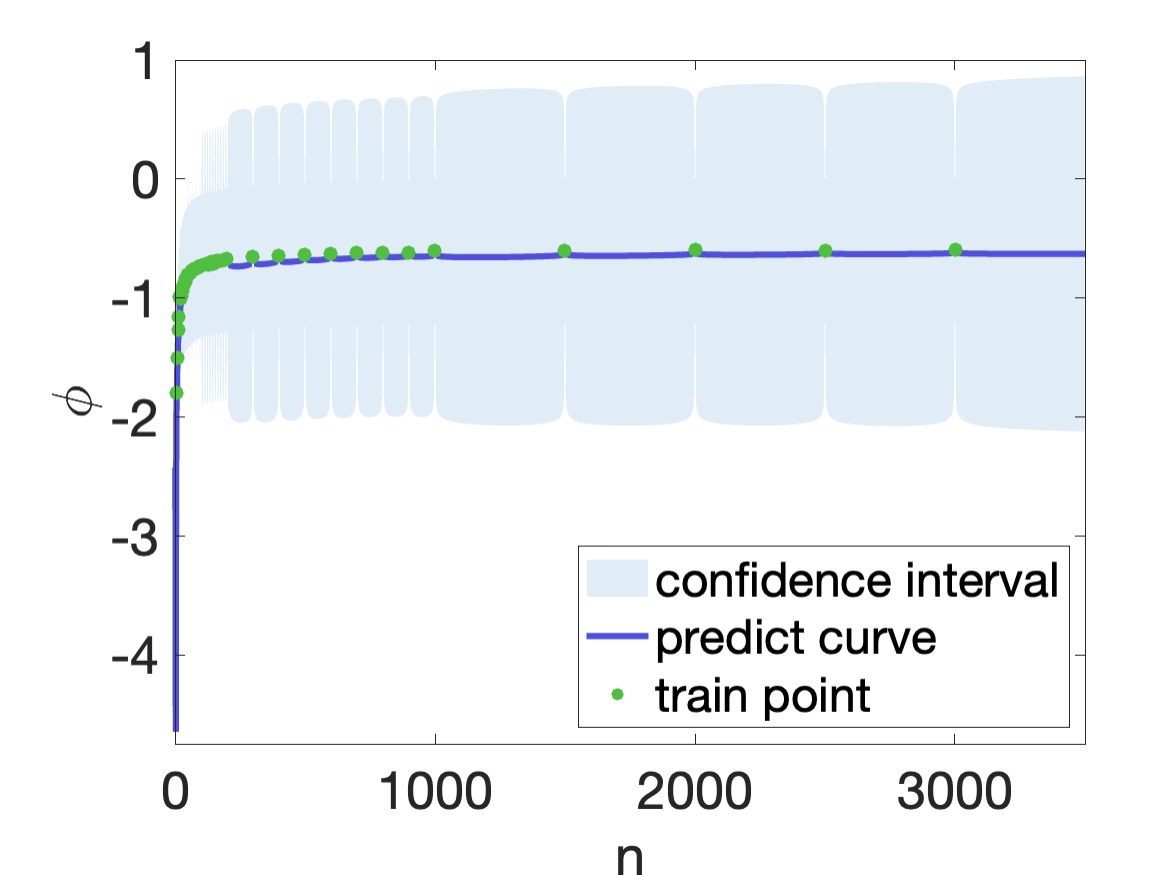}
		\subcaption*{Case(V)}
	\end{subfigure}
	\begin{subfigure}[t]{0.3\textwidth}
		\centering
		\includegraphics[width=1\textwidth,height=0.65\textwidth]{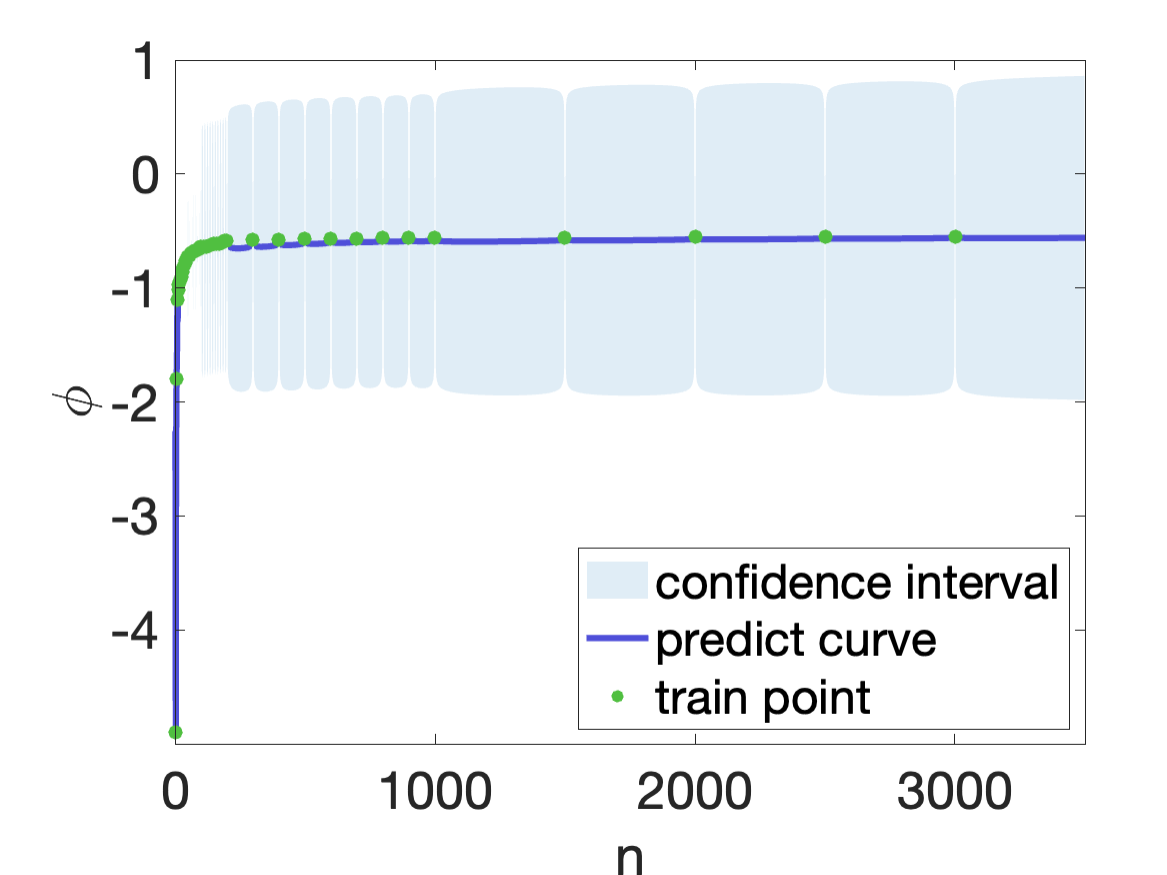}
		\subcaption*{Case(VI)}
	\end{subfigure}
	
	\begin{subfigure}[t]{0.3\textwidth}
		\centering
		\includegraphics[width=1\textwidth,height=0.65\textwidth]{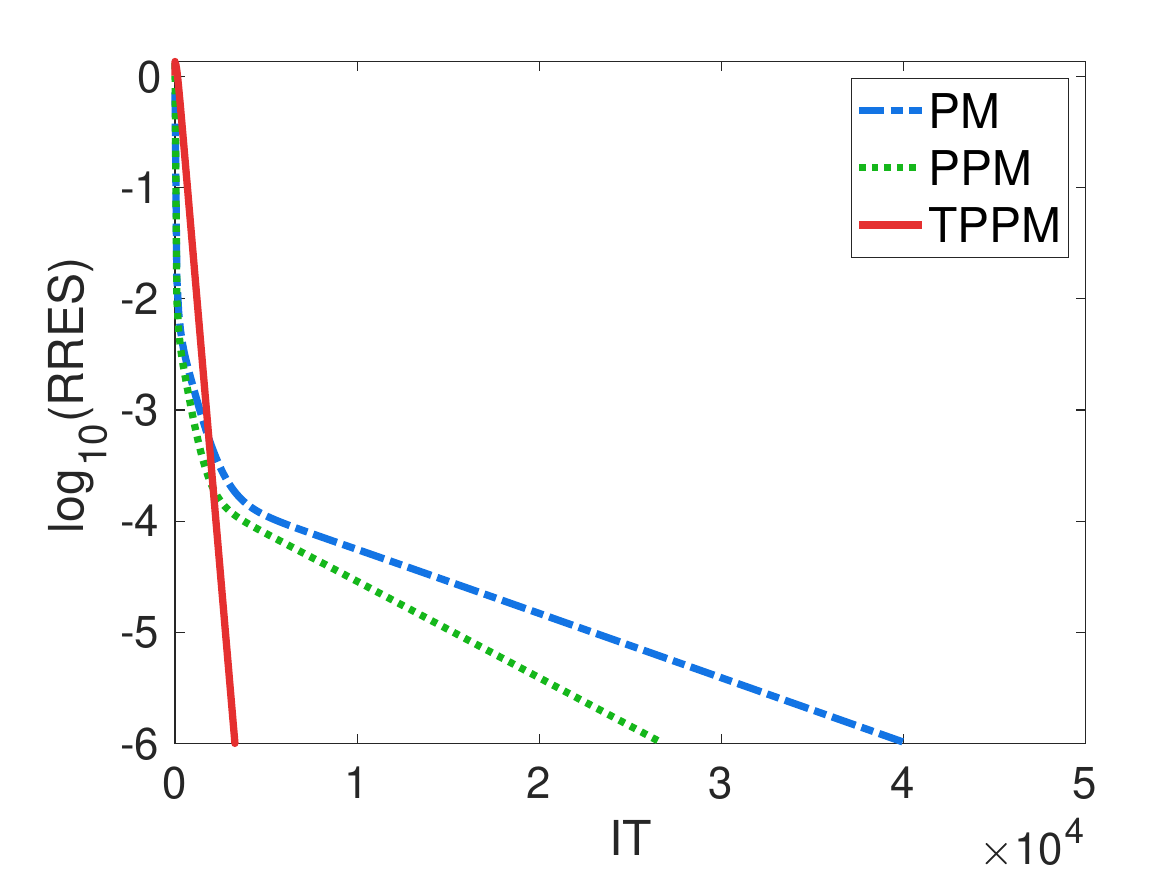}
		\subcaption*{Case(IV)  $n=15000$}
	\end{subfigure}
	\begin{subfigure}[t]{0.3\textwidth}
		\centering
		\includegraphics[width=1\textwidth,height=0.65\textwidth]{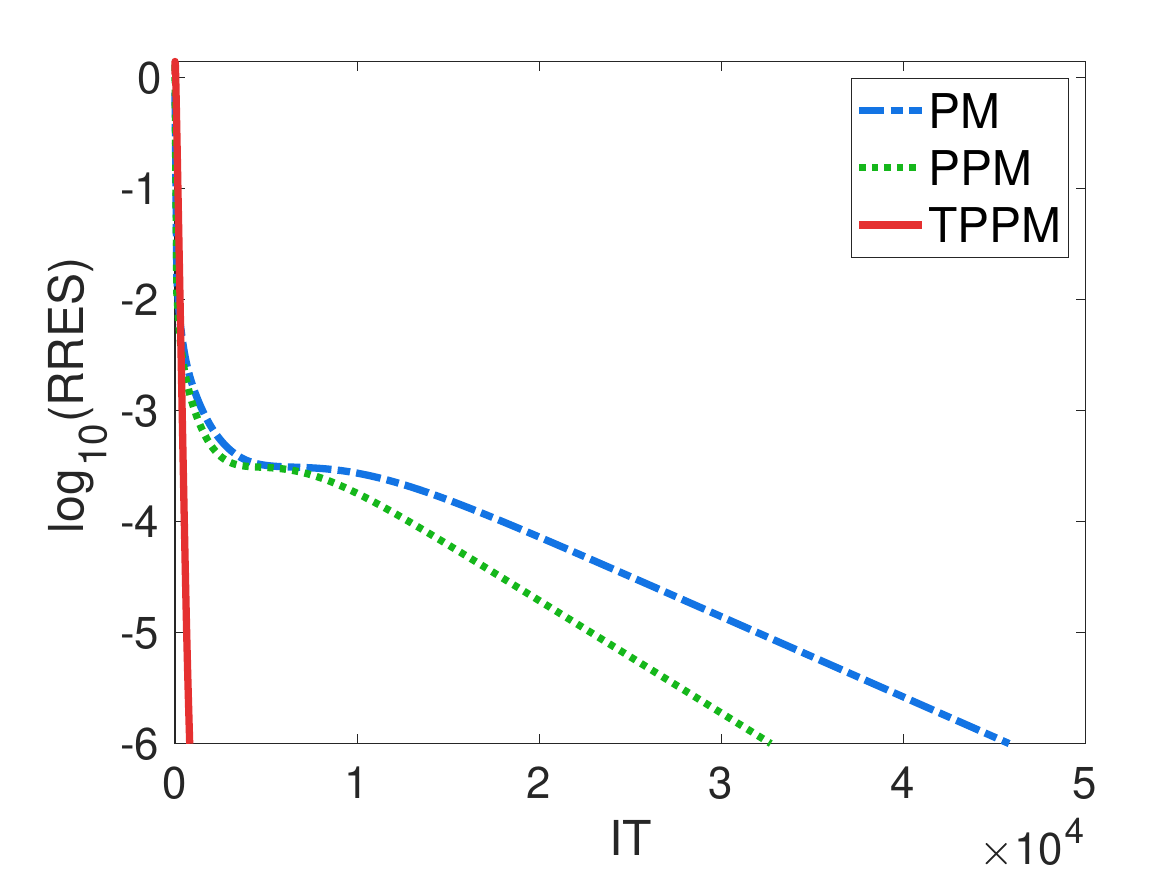}
		\subcaption*{Case(V)  $n=15000$}
	\end{subfigure}
	\begin{subfigure}[t]{0.3\textwidth}
		\centering
		\includegraphics[width=1\textwidth,height=0.65\textwidth]{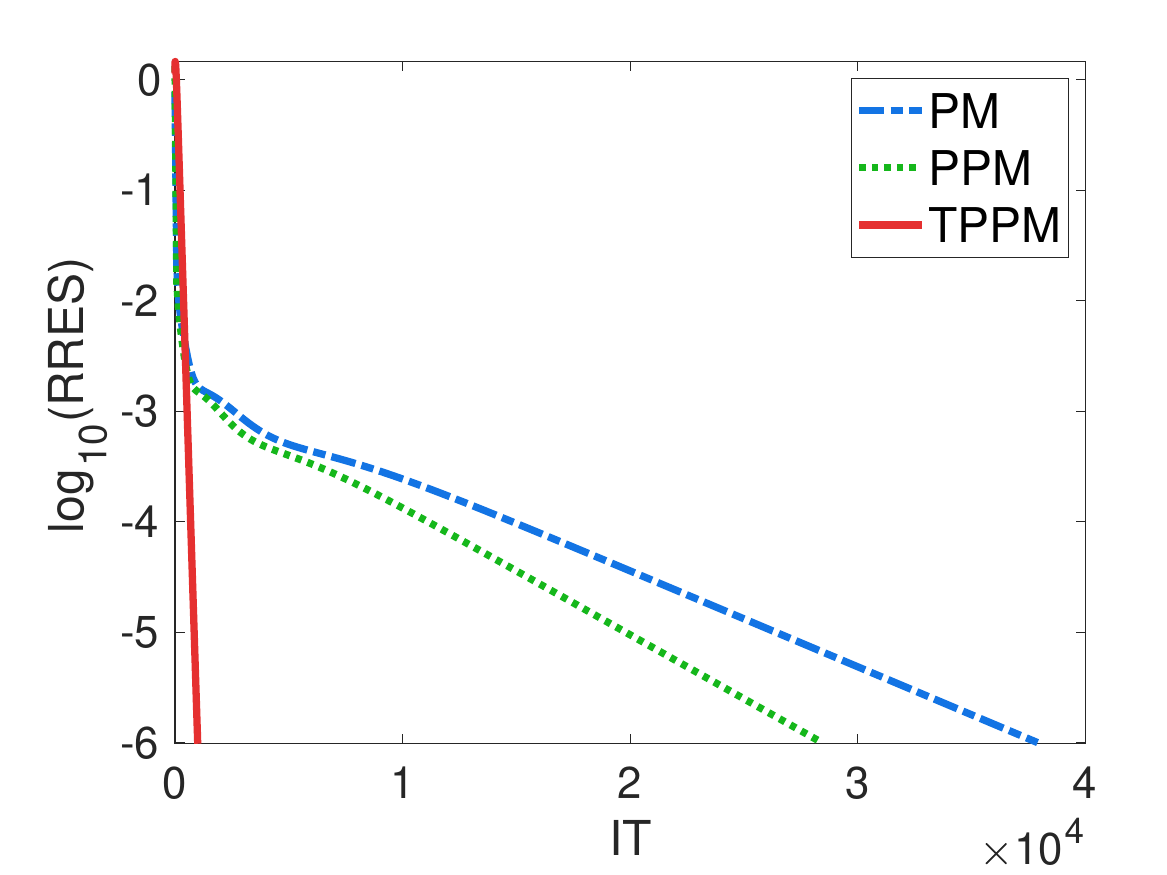}
		\subcaption*{Case(VI) $n=15000$}
	\end{subfigure}
	\caption{The regression curve and $\log_{10}(\text{RRES})$ versus IT for the PM, PPM, and TPPM for Example \ref{exm:generate} under the condition of $n=15000$.}	
	\label{Figure:generate}
\end{figure}

\begin{table}[h!]
	\caption{~IT and CPU of the PM, PPM, and TPPM for Example \ref{exm:3}}
	\centering
		\renewcommand{\arraystretch}{0.9}
	\setlength{\tabcolsep}{3.5pt}
	\begin{tabular}{cccccccc}
		\toprule
		$\alpha$                    & $N$                      &   & PM & PPM & TPPM & $\text{Speed-up}_\text{PM}$             & $\text{Speed-up}_{\text{PPM}}$            \\ \midrule
		\multirow{4}{*}{\makecell{Case(I)\\$[\phi_1,\phi_2] = [-0.6331,-0.6073]$}}
		& \multirow{2}{*}{400} & IT    &966 & 721 &  170 & \multirow{2}{*}{5.4025} & \multirow{2}{*}{4.0962} \\
		&                        & CPU   & 1.0135 & 0.7684 &  0.1876&                       &                       \\
		\cmidrule{2-8}
		& \multirow{2}{*}{600} & IT    & 1257 &  941 &  191 & \multirow{2}{*}{6.8877} & \multirow{2}{*}{ 4.8353} \\
		&                        & CPU   &3.1577 & 2.2167 & 0.4584&                       &                       \\
		\midrule
		\multirow{4}{*}{\makecell{Case(II)\\$[\phi_1,\phi_2] = [-0.6467,-0.6321]$}}
		& \multirow{2}{*}{400} & IT    &   1724 &1251& 232& \multirow{2}{*}{6.5241} & \multirow{2}{*}{4.6990} \\
		&                        & CPU   &   1.7655 & 1.2716 &  0.2706 &                       &                       \\
		\cmidrule{2-8}
		& \multirow{2}{*}{600} & IT    &  2270& 1649 &   276& \multirow{2}{*}{ 6.8329} & \multirow{2}{*}{5.0148} \\
		&                        & CPU   &5.2201 & 3.8312 & 0.7640&                       &                       \\
		\bottomrule
	\end{tabular}
	\label{elliptic_Table}
\end{table}

\begin{figure}[htbp]
	\begin{subfigure}[t]{0.32\textwidth}
		\centering
		\includegraphics[width=1\textwidth,height=0.7\textwidth]{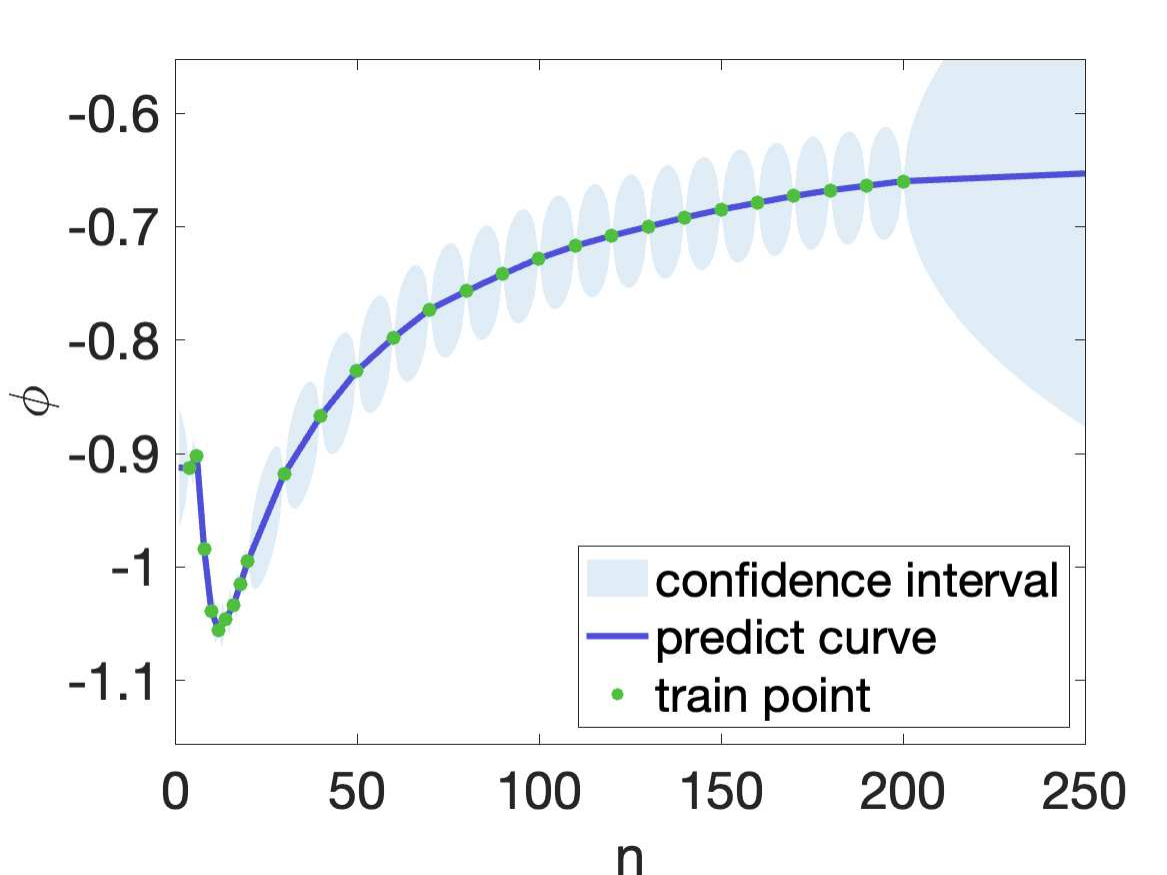}
		\subcaption*{Case(I)}
	\end{subfigure}
	\begin{subfigure}[t]{0.32\textwidth}
		\centering
	\includegraphics[width=1\textwidth,height=0.7\textwidth]{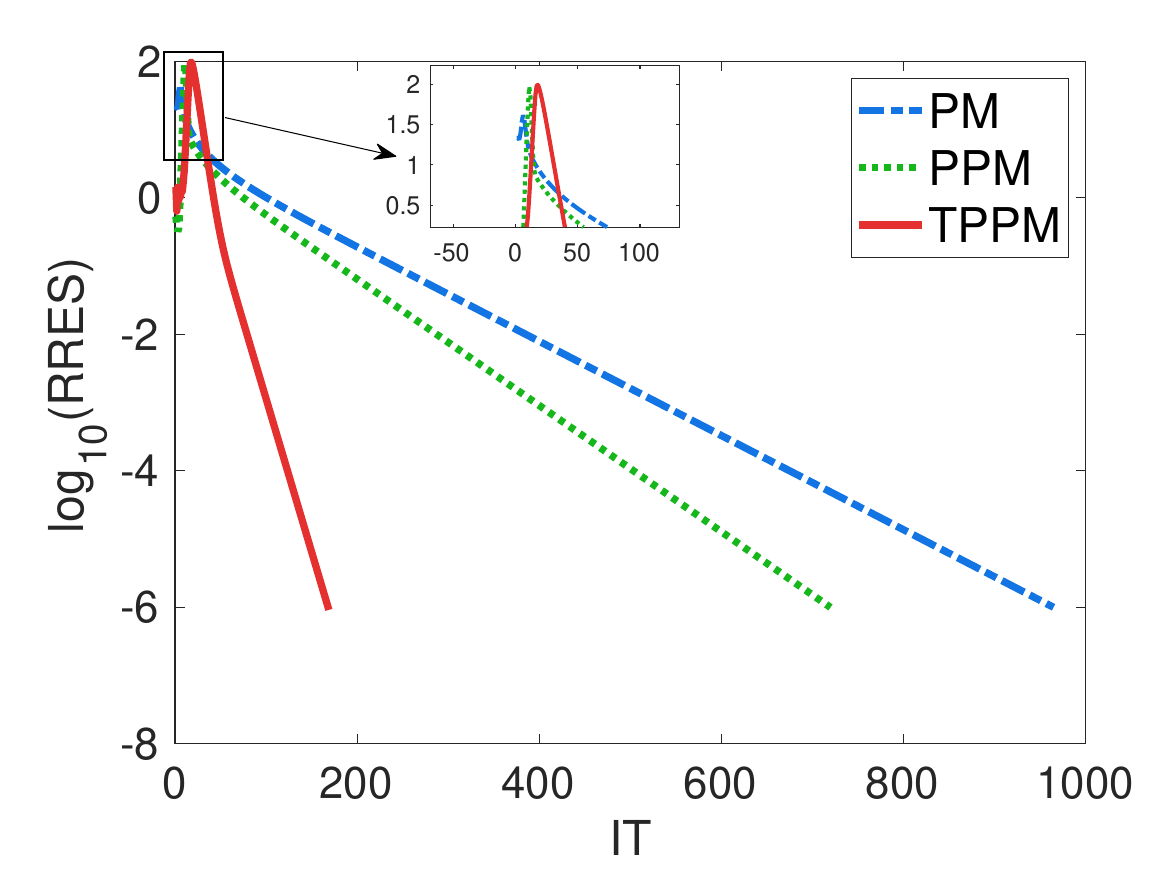}
		\subcaption*{Case(I), $N=400$}
	\end{subfigure}
	\begin{subfigure}[t]{0.32\textwidth}
		\centering
	\includegraphics[width=1\textwidth,height=0.7\textwidth]{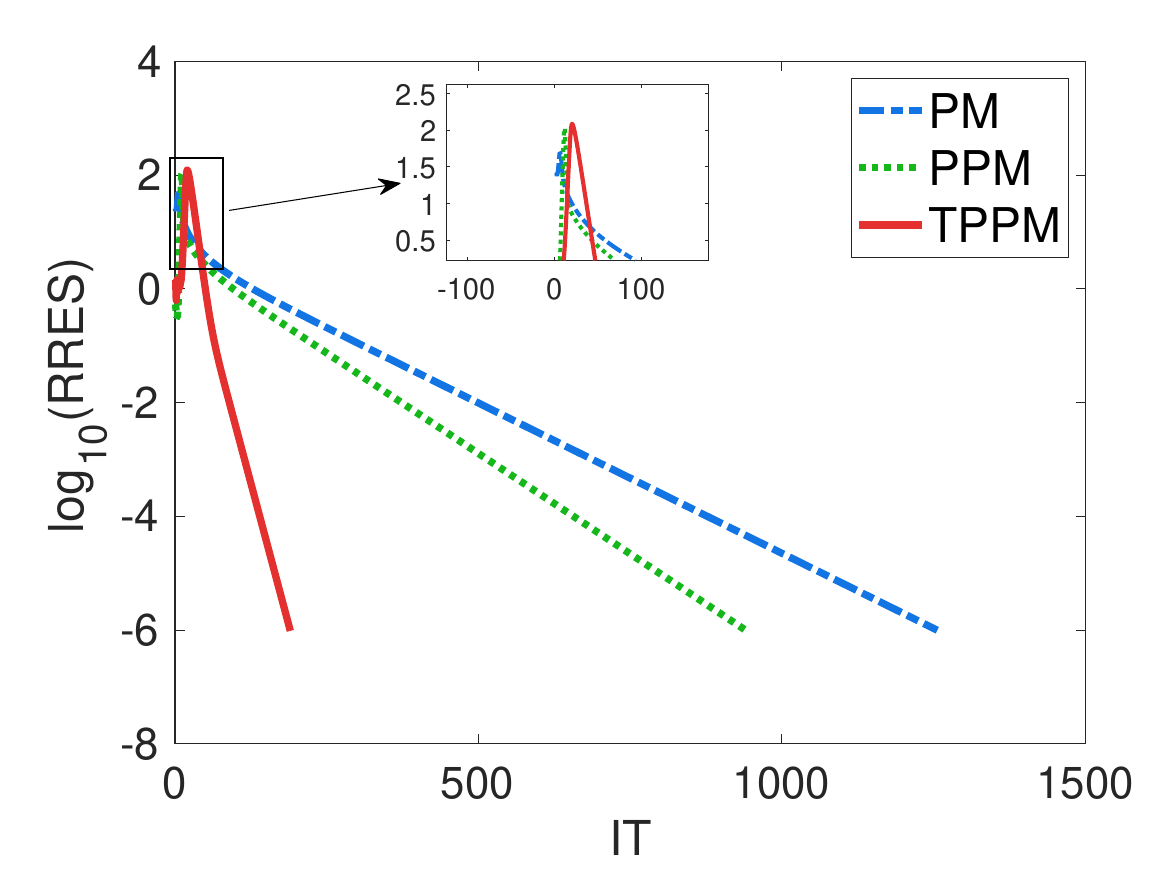}
		\subcaption*{Case(I), $N=600$}
	\end{subfigure}\\

	\begin{subfigure}[t]{0.32\textwidth}
		\centering
	\includegraphics[width=1\textwidth,height=0.7\textwidth]{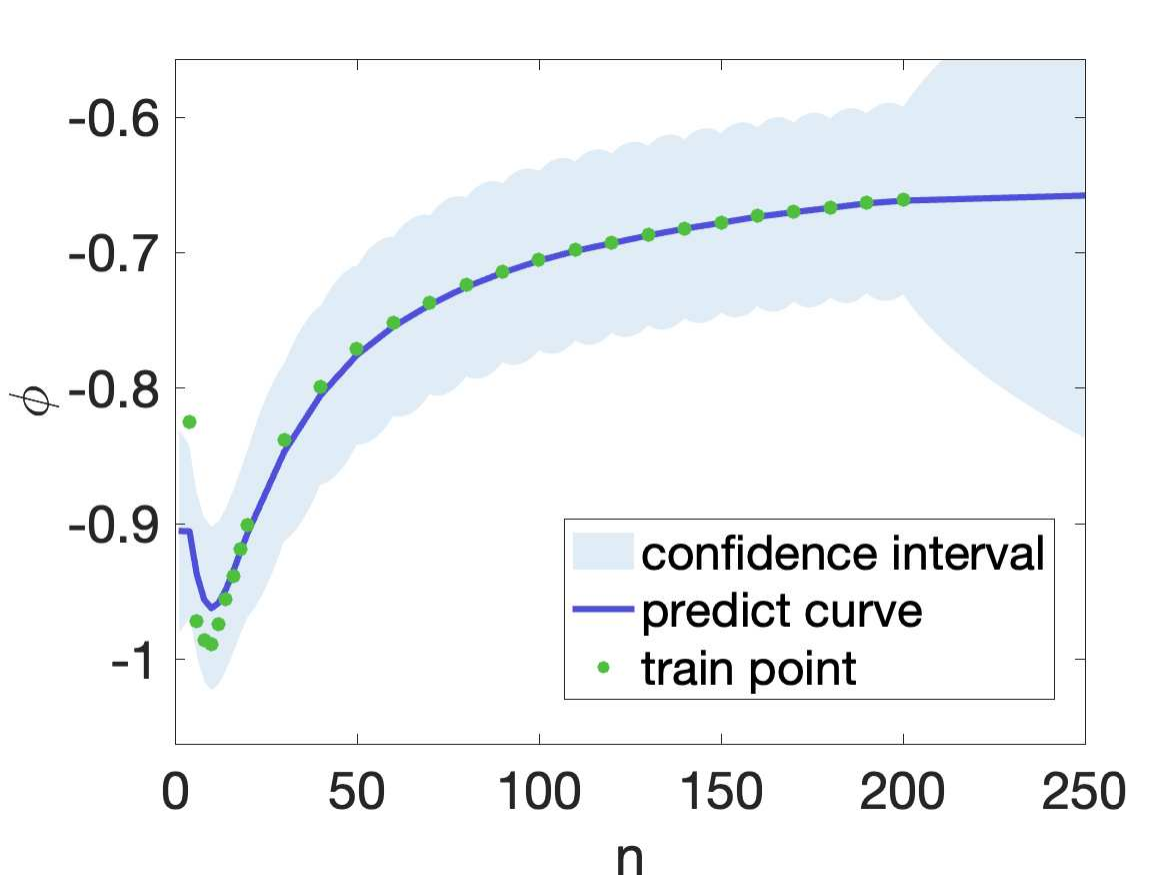}
		\subcaption*{Case(II)}
	\end{subfigure}
	\begin{subfigure}[t]{0.32\textwidth}
		\centering
	\includegraphics[width=1\textwidth,height=0.7\textwidth]{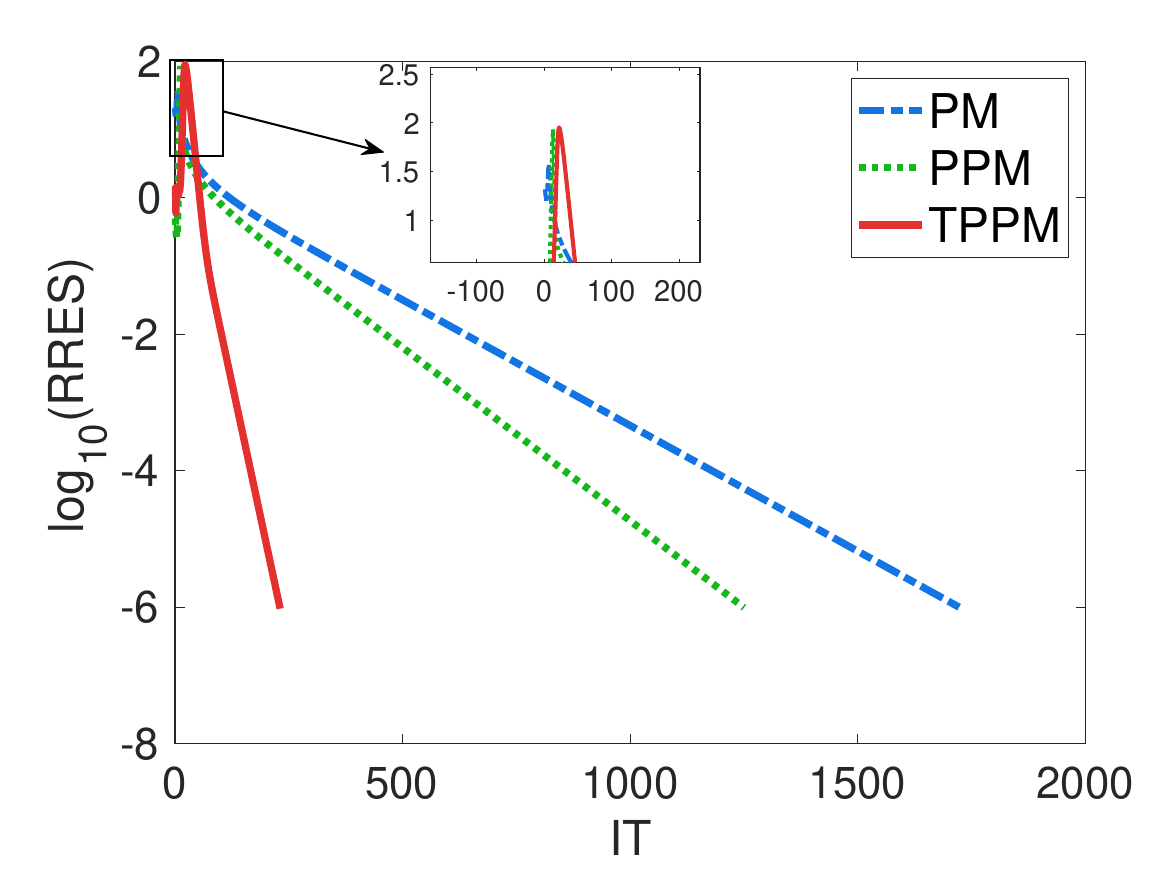}
		\subcaption*{Case(II), $N=400$}
	\end{subfigure}
	\begin{subfigure}[t]{0.32\textwidth}
		\centering
	\includegraphics[width=1\textwidth,height=0.7\textwidth]{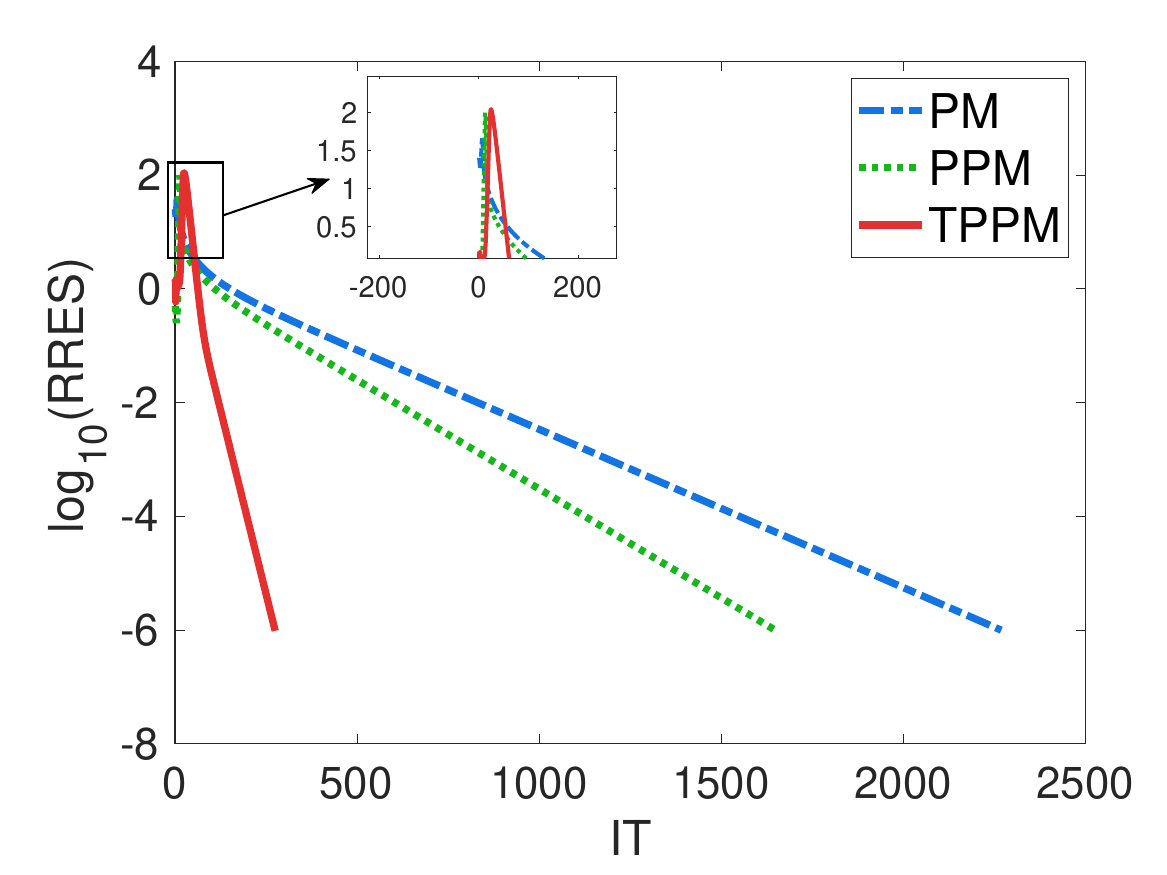}
		\subcaption*{Case(II), $N=600$}
	\end{subfigure}
	\caption{The regression curve and $\log_{10}(\text{RRES})$ versus IT for the PM, PPM, and TPPM for Example \ref{exm:3}.}
	\label{Figure:elliptic}
\end{figure}

Table \ref{elliptic_Table} and Figure \ref{Figure:elliptic} present the numerical results obtained by the PM, PPM, and TPPM for Example \ref{exm:3}, where $\phi_1$ and $\phi_2$ are parameters predicted by GPR for $N=400$ and 600, respectively. From these results, one can find that the TPPM is the most efficient in terms of both IT and CPU. In particular, $\text{speed-up}_\text{PM}$ at least 5.4025 and at most 6.8877, $\text{speed-up}_\text{PPM}$ at least 4.0962 and at most 5.0148.

\begin{example}
	\label{exm:Wishart}
	Consider a Wishart matrix $W = HH^T$, whose eigenvalues obey the  Marčenko-Pastur law \cite{paper:Wishart}, where $H\in \mathbb{R}^{n\times m}$, ($m \ge n$) with independent and identically distributed (i.i.d.) Gaussian entries.  $H$ can be randomly generated using the MATLAB function $\textbf{randn}$.
\end{example}
We select the parameter $m = \alpha n$, where $\alpha$ takes values 1, 2 and 3. Consider the values of $n$ to be 5000, 10000 and 15000.

The parameter prediction results of Example \ref{exm:Wishart} are listed in Table \ref{para:Wishart}, where $\phi_1$, $\phi_2$, and $\phi_3$ correspond to the situations where $n$ is set to 5000, 10000, and 15000, respectively.

\begin{table}[htbp]
	\captionsetup{width=\textwidth}
	\centering
	\caption{The parameters predicted by GPR of Example \ref{exm:Wishart}}
	\label{para:Wishart}
	\begin{tabularx}{0.75\textwidth}{@{}CCCC@{}}
		\toprule
		& $\alpha = 1$ &$\alpha = 2$ & $\alpha = 3$ \\ \midrule
		$\phi_1$ & 0.6406  & 0.8416   & 0.9513   \\
		$\phi_2$ & 0.6470  & 0.8517   & 0.9630  \\
		$\phi_3$ & 0.6470  & 0.8529   & 0.9650  \\
		\bottomrule
	\end{tabularx}
\end{table}


\begin{table}[h!]
	\caption{~IT and CPU of the PM, PPM, APM and TPPM for Example \ref{exm:Wishart}}
	\centering
	\renewcommand{\arraystretch}{0.9}
	 \setlength{\tabcolsep}{2pt}
		\begin{tabular}{cccccccccc}
			\toprule
			$\alpha$                    & $n$                      &   & PM & PPM & APM & TPPM & $\text{Speed-up}_\text{PM}$             & $\text{Speed-up}_{\text{PPM}}$ &$\text{Speed-up}_{\text{APM}}$             \\ \midrule
			\multirow{6}{*}{$\alpha=1$}
			& \multirow{2}{*}{5000} & IT    &5000 &2984 &  3036 &  750& \multirow{2}{*}{ 6.4168} & \multirow{2}{*}{ 3.8713}&  \multirow{2}{*}{ 3.9389}\\
			&                        & CPU   &   28.3441 &  17.1000& 17.3988&4.4172&                       &     &                  \\
			\cmidrule{2-10}
			& \multirow{2}{*}{10000} & IT    &37940& 22647 & 22417 & 6173  & \multirow{2}{*}{6.1630} & \multirow{2}{*}{3.6484}&\multirow{2}{*}{3.6116}\\
			&                        & CPU   & 674.7503 & 399.4438&  395.4142& 109.4845    &                  &                  &     \\
			\cmidrule{2-10}
			& \multirow{2}{*}{15000} & IT    &4636& 2772 & 2775  & 764 & \multirow{2}{*}{ 5.7637} & \multirow{2}{*}{3.5126}& \multirow{2}{*}{4.2746}\\
			&                        & CPU    & 178.8062 & 108.9688& 132.6092  & 31.0226 &   && \\
			\midrule
			\multirow{6}{*}{$\alpha=2$}
			& \multirow{2}{*}{5000} & IT    & 2222 &1457&  1707  &391 & \multirow{2}{*}{4.5913} & \multirow{2}{*}{3.4067}& \multirow{2}{*}{3.6510} \\
			&                        & CPU   &  11.9806  & 8.8895&  9.5270  &2.6094&                       &              &        \\
			\cmidrule{2-10}
			& \multirow{2}{*}{10000} & IT    & 9272& 6083&   7196   &1780& \multirow{2}{*}{5.2947} & \multirow{2}{*}{3.4054}& \multirow{2}{*}{4.0255}\\
			&                        & CPU   &  165.4608 & 106.4177& 125.7972   &31.2501&            &              &         \\
			\cmidrule{2-10}
			& \multirow{2}{*}{15000} & IT    &7334 &4813& 5640       & 1416& \multirow{2}{*}{5.4875} & \multirow{2}{*}{3.3979}&   \multirow{2}{*}{4.0213}    \\
			&                        & CPU    & 300.3905& 186.0060 &   220.1279        &54.7411&   &         &\\
			\midrule
			\multirow{6}{*}{$\alpha=3$}
			& \multirow{2}{*}{5000} & IT    & 1462 & 1096 &  1326   & 311 & \multirow{2}{*}{ 4.5028} & \multirow{2}{*}{ 3.4170}   & \multirow{2}{*}{ 4.0864} \\
			&                        & CPU   & 7.7575&  5.8868 & 7.0402  &1.7228&                       &     &                  \\
			\cmidrule{2-10}
			& \multirow{2}{*}{10000} & IT    & 3841 & 2881 & 3516  &  872 & \multirow{2}{*}{4.4028} & \multirow{2}{*}{ 3.3776}&  \multirow{2}{*}{4.0161}      \\
			&                        & CPU   & 68.1550& 52.2848 & 62.1692 & 15.4801&                       &           &            \\
			\cmidrule{2-10}
			& \multirow{2}{*}{15000} & IT    &2634 &1974 & 2409    &592 & \multirow{2}{*}{  4.6225} & \multirow{2}{*}{ 3.3224}&   \multirow{2}{*}{  4.0594}   \\
			&                        & CPU    &104.8831& 75.3851 & 92.1072    &   22.6898&   &       & \\
			\bottomrule
			
		\end{tabular}
	\label{Wishart_Table}
\end{table}

\begin{figure}[htbp]
	\centering
	
	\begin{subfigure}[t]{0.32\textwidth}
		\centering
		\includegraphics[width=1\textwidth,height=0.65\textwidth]{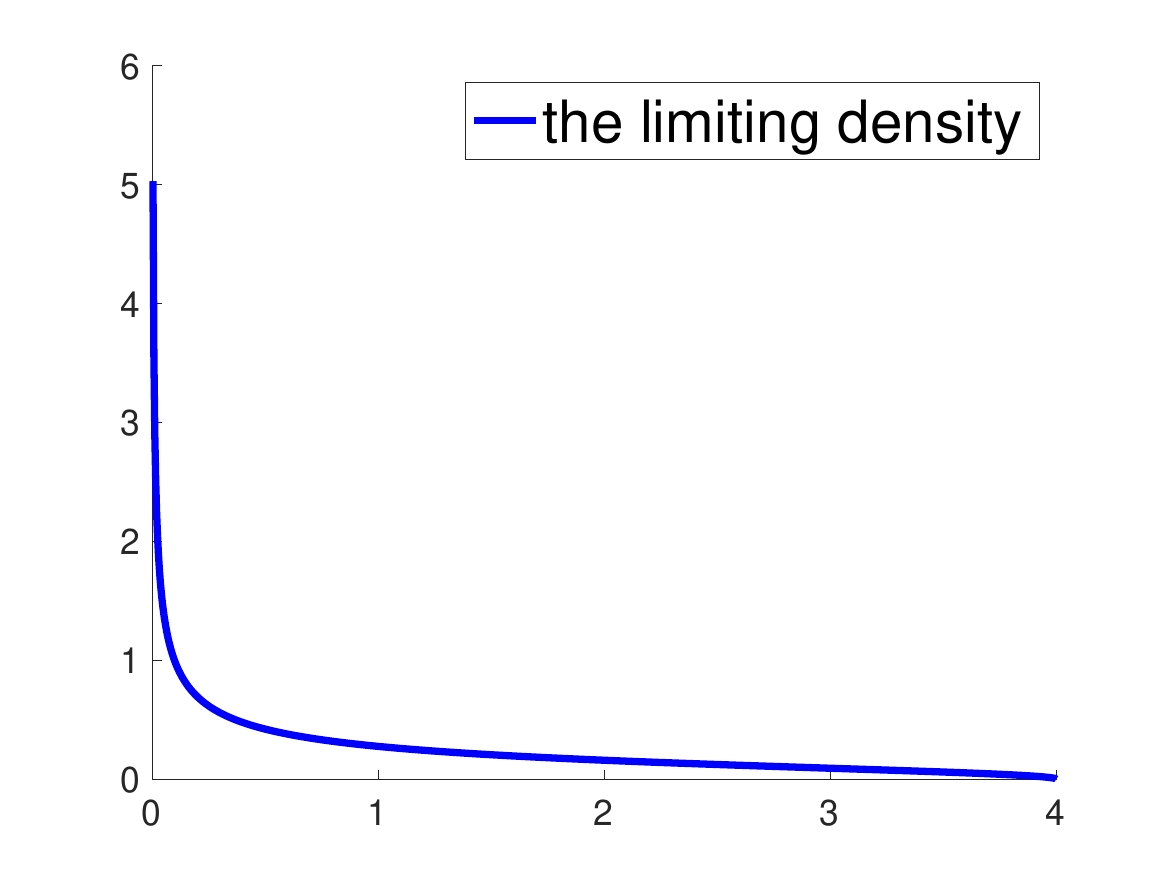}
		\captionsetup{font={small}, justification=centering}
		\subcaption*{Marčenko-Pastur law \\ with $\alpha=1$}
	\end{subfigure}
	\begin{subfigure}[t]{0.32\textwidth}
		\centering
		\includegraphics[width=1\textwidth,height=0.65\textwidth]{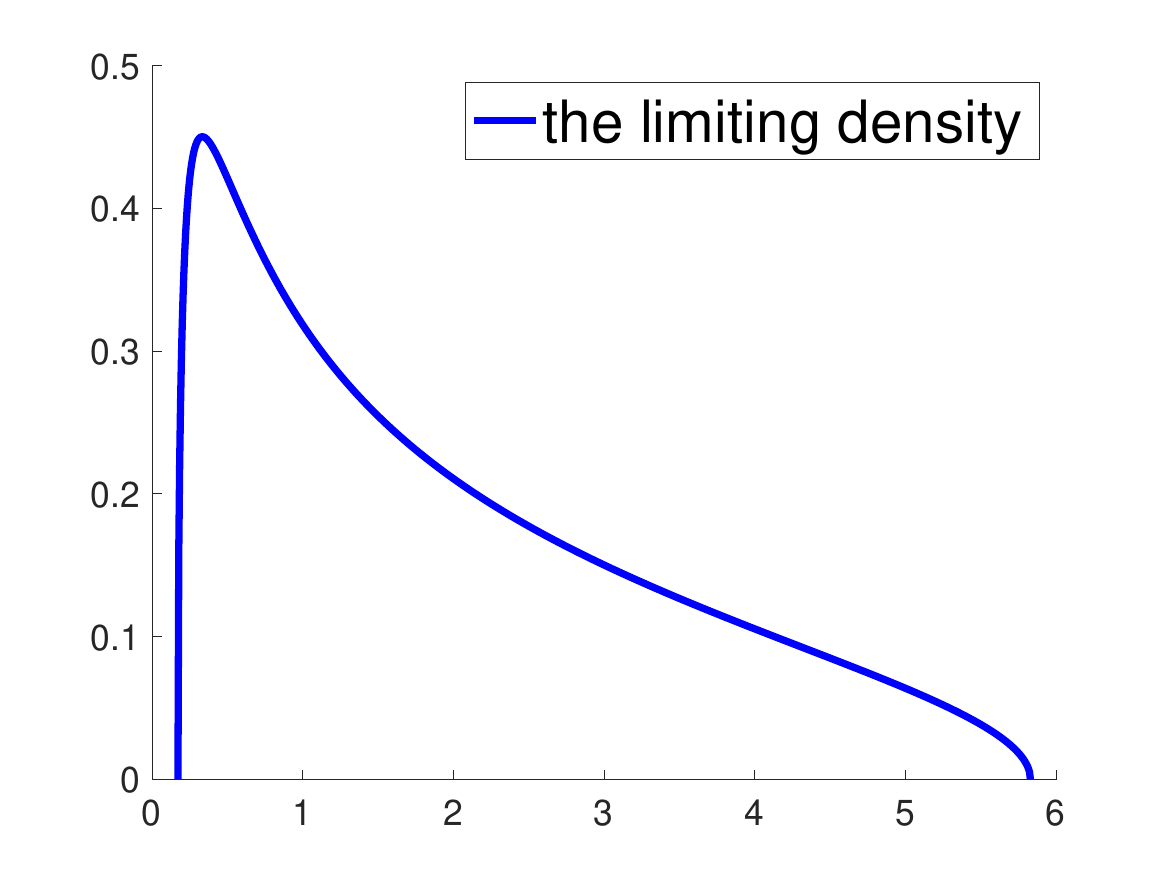}
		\captionsetup{font={small}, justification=centering}
		\subcaption*{Marčenko-Pastur law \\ with $\alpha=2$}
	\end{subfigure}
	\begin{subfigure}[t]{0.32\textwidth}
		\centering
		\includegraphics[width=1\textwidth,height=0.65\textwidth]{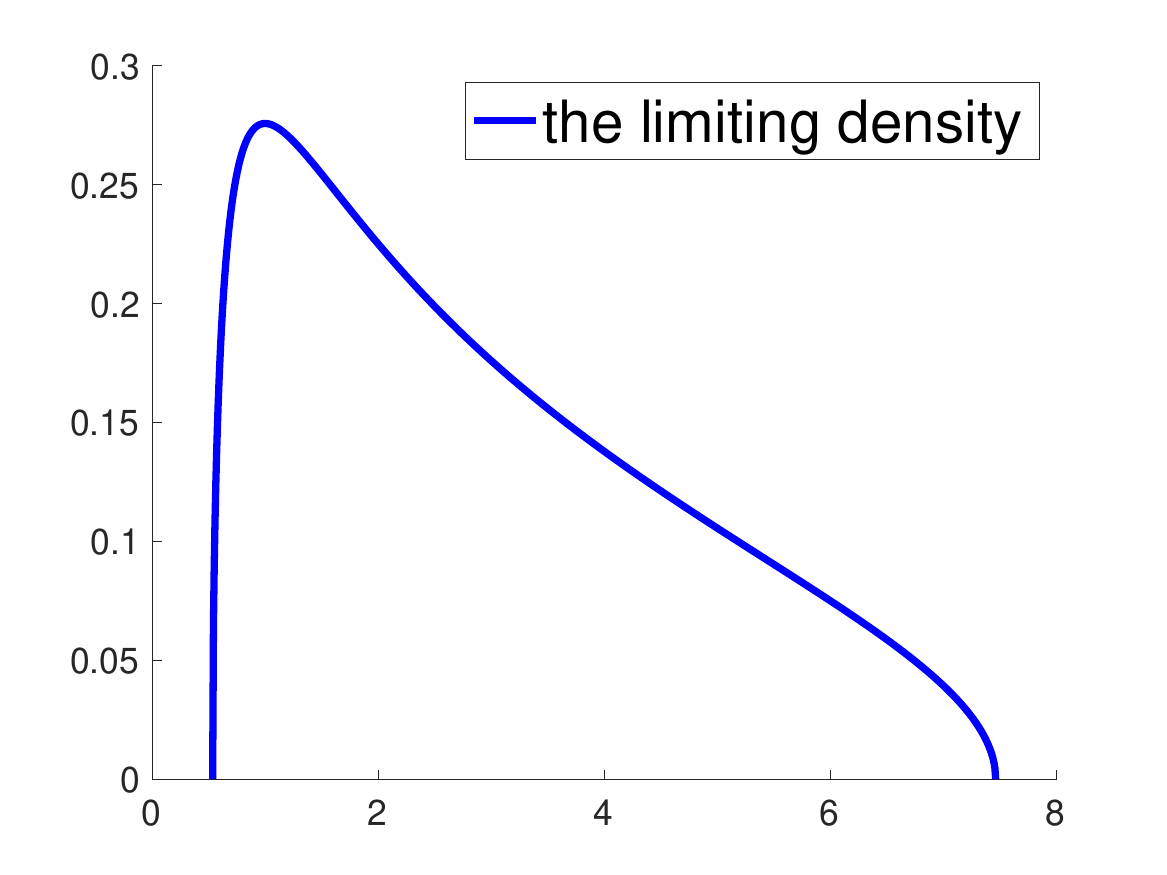}
		\captionsetup{font={small}, justification=centering}
		\subcaption*{Marčenko-Pastur law \\ with $\alpha=3$}
	\end{subfigure}

	\begin{subfigure}[t]{0.32\textwidth}
		\centering
		\includegraphics[width=1\textwidth,height=0.65\textwidth]{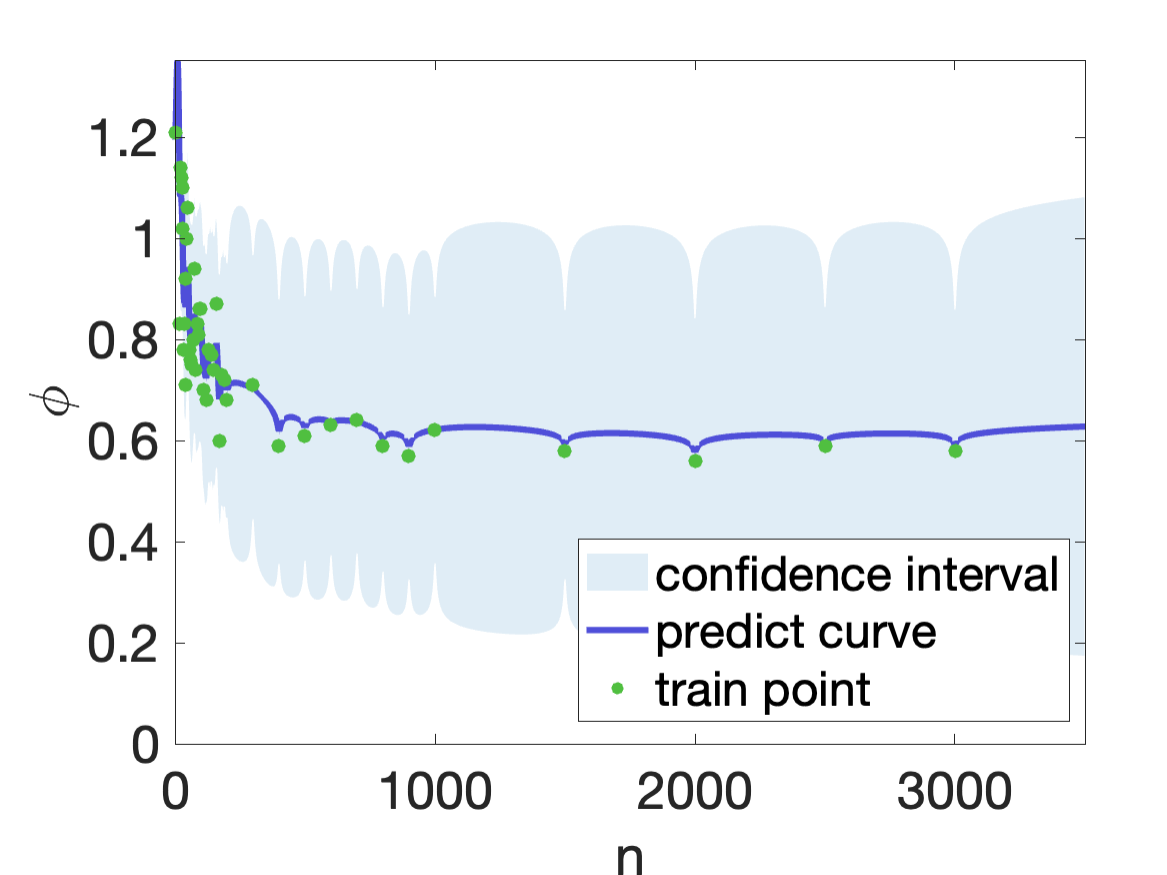}
		\subcaption*{$\alpha=1$}
	\end{subfigure}
	\begin{subfigure}[t]{0.32\textwidth}
		\centering
		\includegraphics[width=1\textwidth,height=0.65\textwidth]{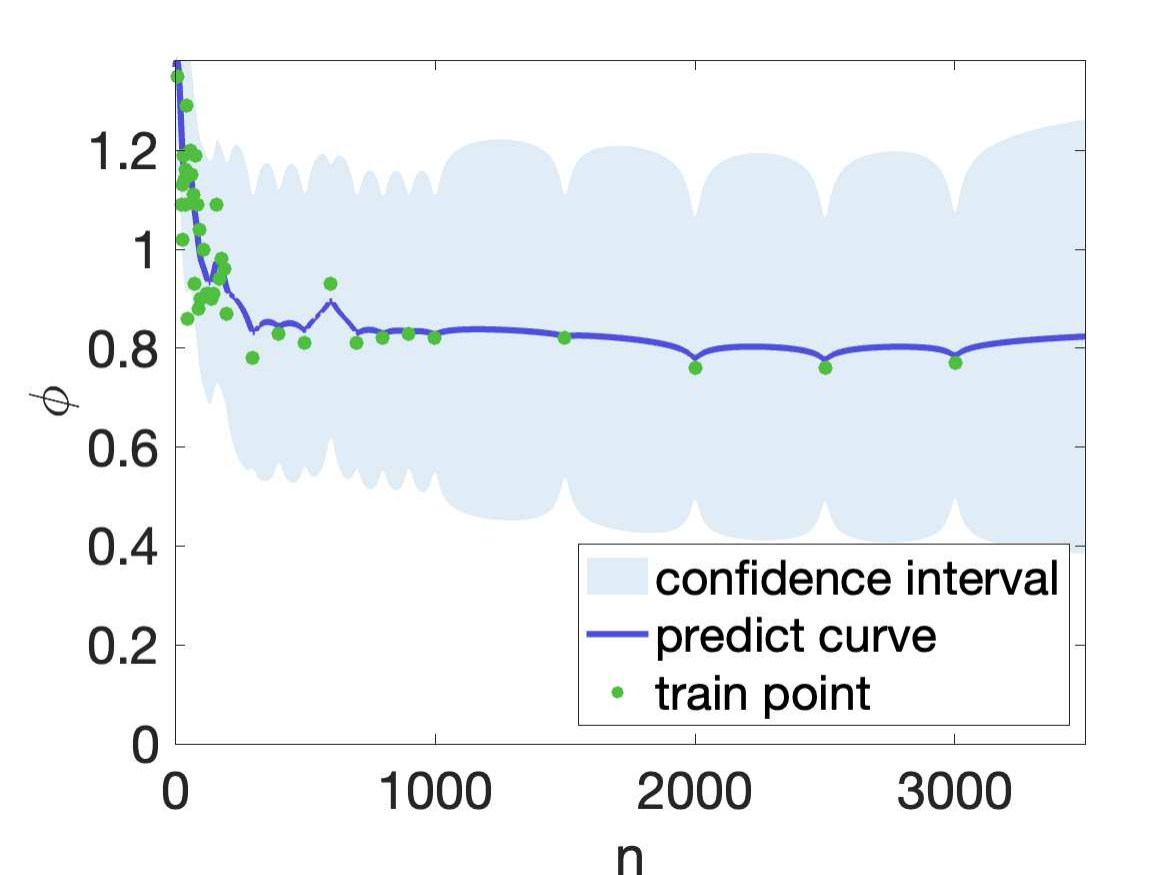}
		\subcaption*{$\alpha=2$}
	\end{subfigure}
	\begin{subfigure}[t]{0.32\textwidth}
		\centering
		\includegraphics[width=1\textwidth,height=0.65\textwidth]{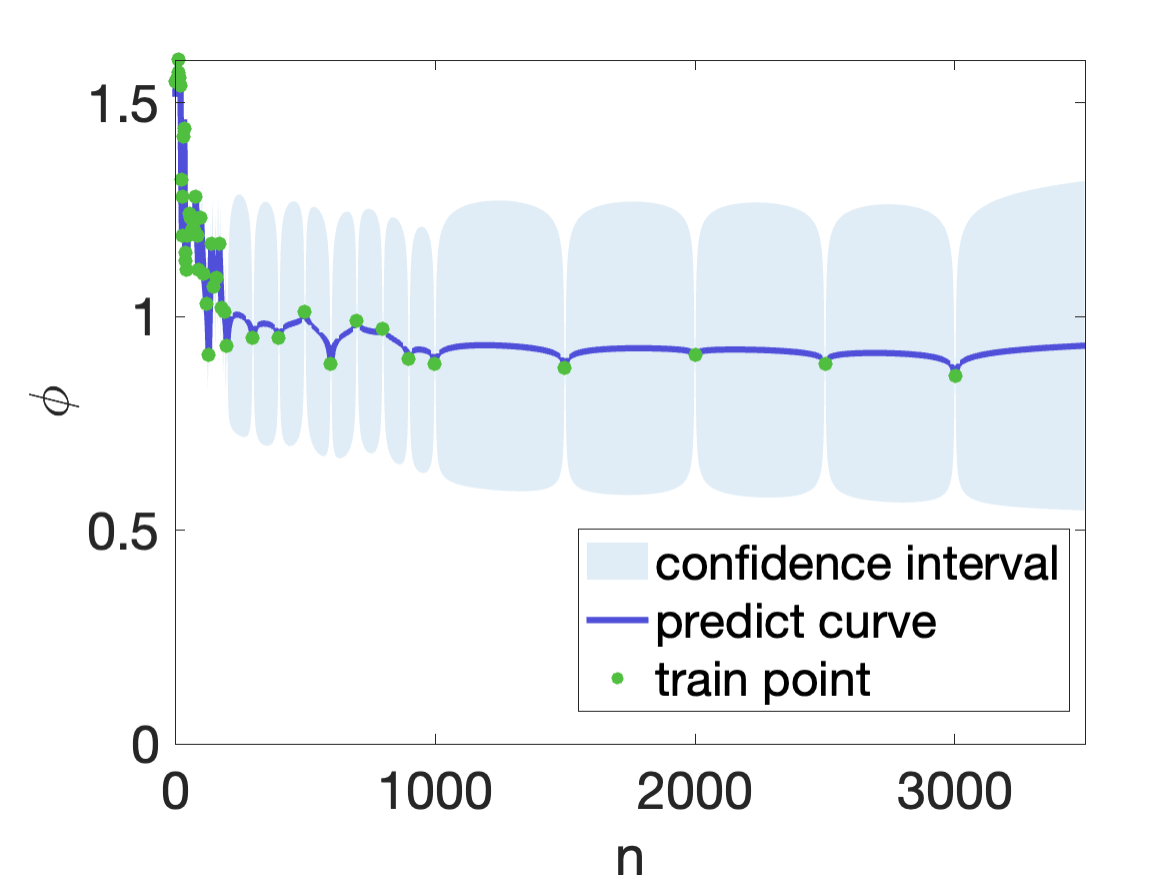}
		\subcaption*{$\alpha=3$}
	\end{subfigure}

\begin{subfigure}[t]{0.32\textwidth}
	\centering
	\includegraphics[width=1\textwidth,height=0.65\textwidth]{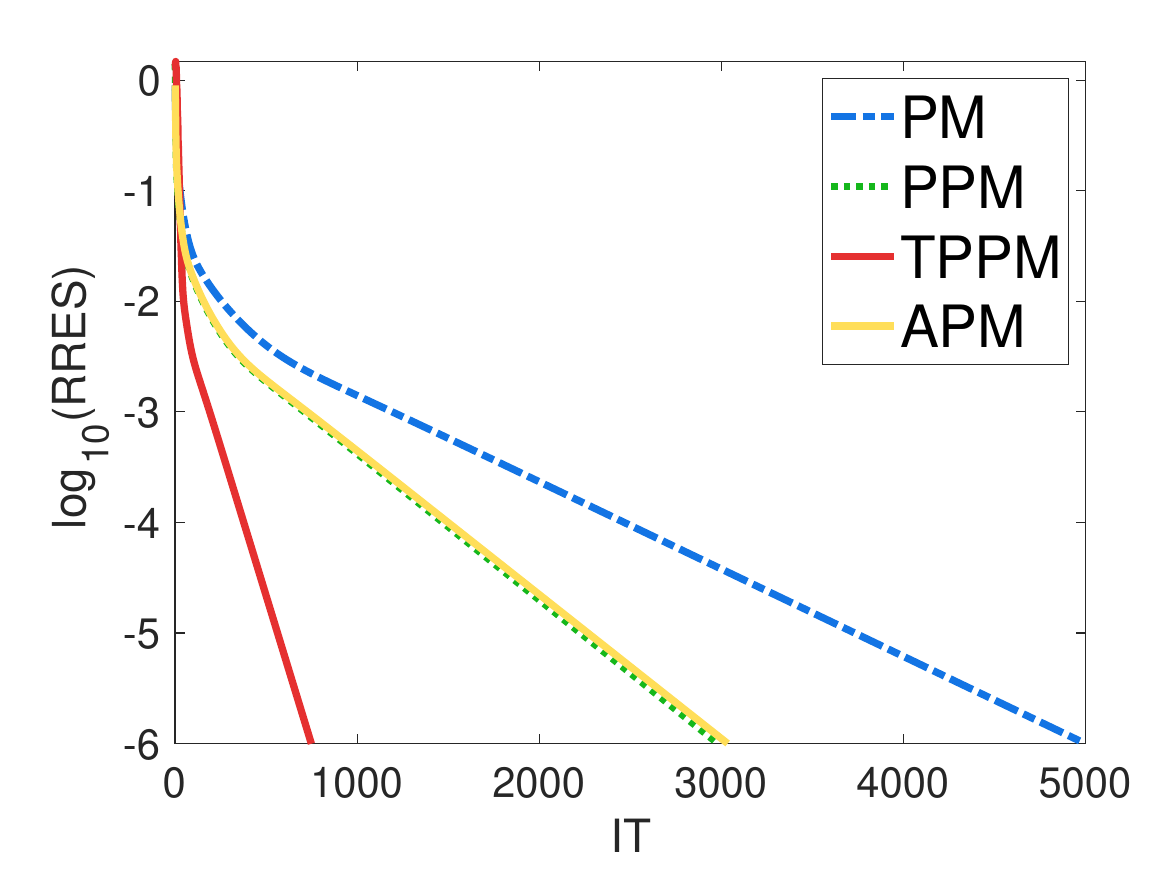}
	\subcaption*{ $\alpha=3;\quad n=5000$}
\end{subfigure}
	\begin{subfigure}[t]{0.32\textwidth}
	\centering
	\includegraphics[width=1\textwidth,height=0.65\textwidth]{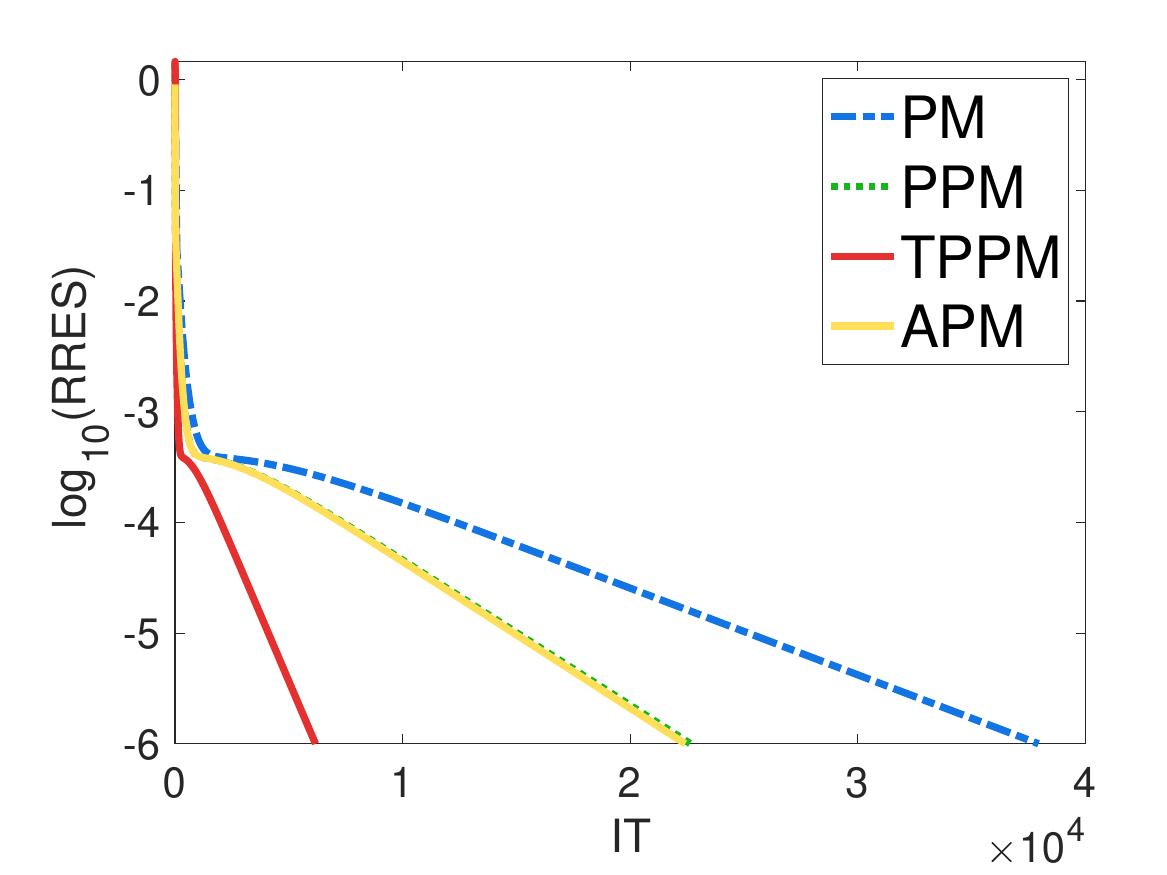}
	\subcaption*{ $\alpha=3;\quad n=10000$}
\end{subfigure}
	\begin{subfigure}[t]{0.32\textwidth}
	\centering
	\includegraphics[width=1\textwidth,height=0.65\textwidth]{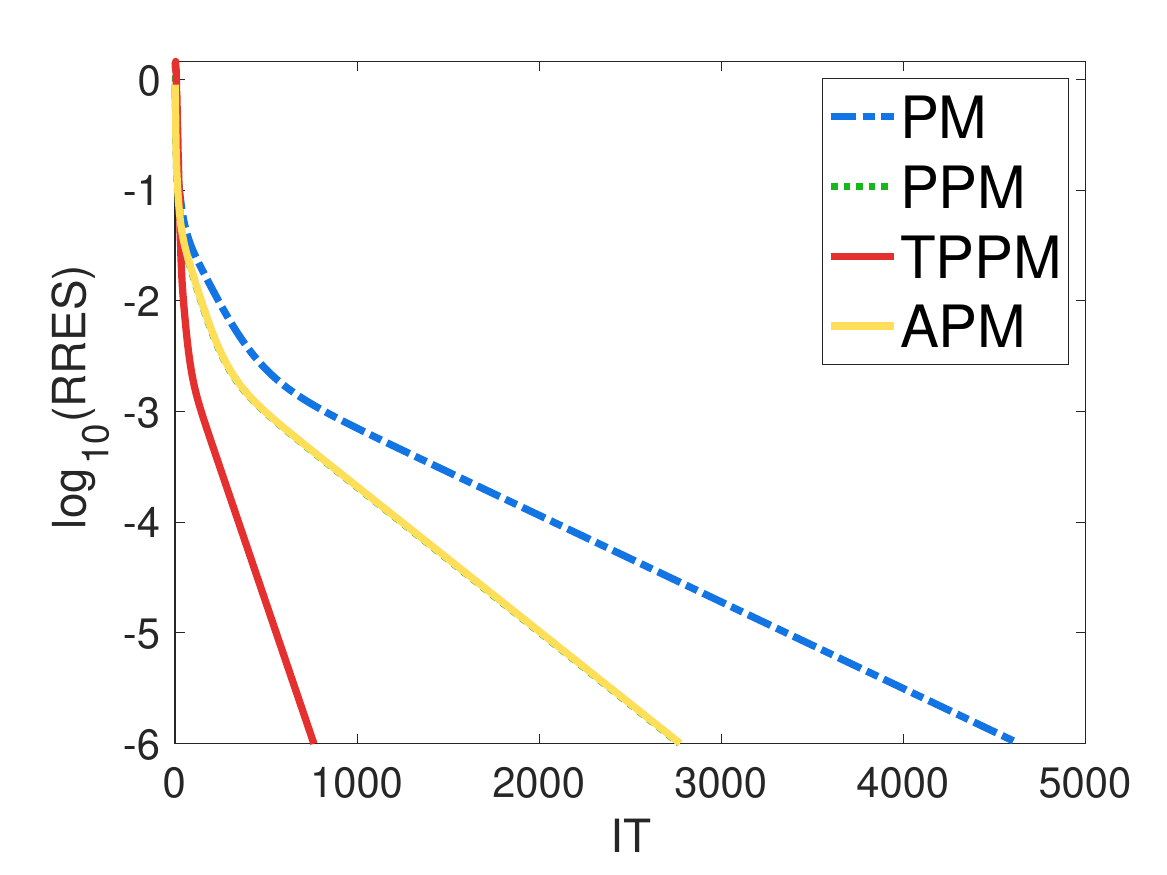}
	\subcaption*{ $\alpha=3;\quad n=15000$}
\end{subfigure}

		\begin{subfigure}[t]{0.32\textwidth}
		\centering
		\includegraphics[width=1\textwidth,height=0.65\textwidth]{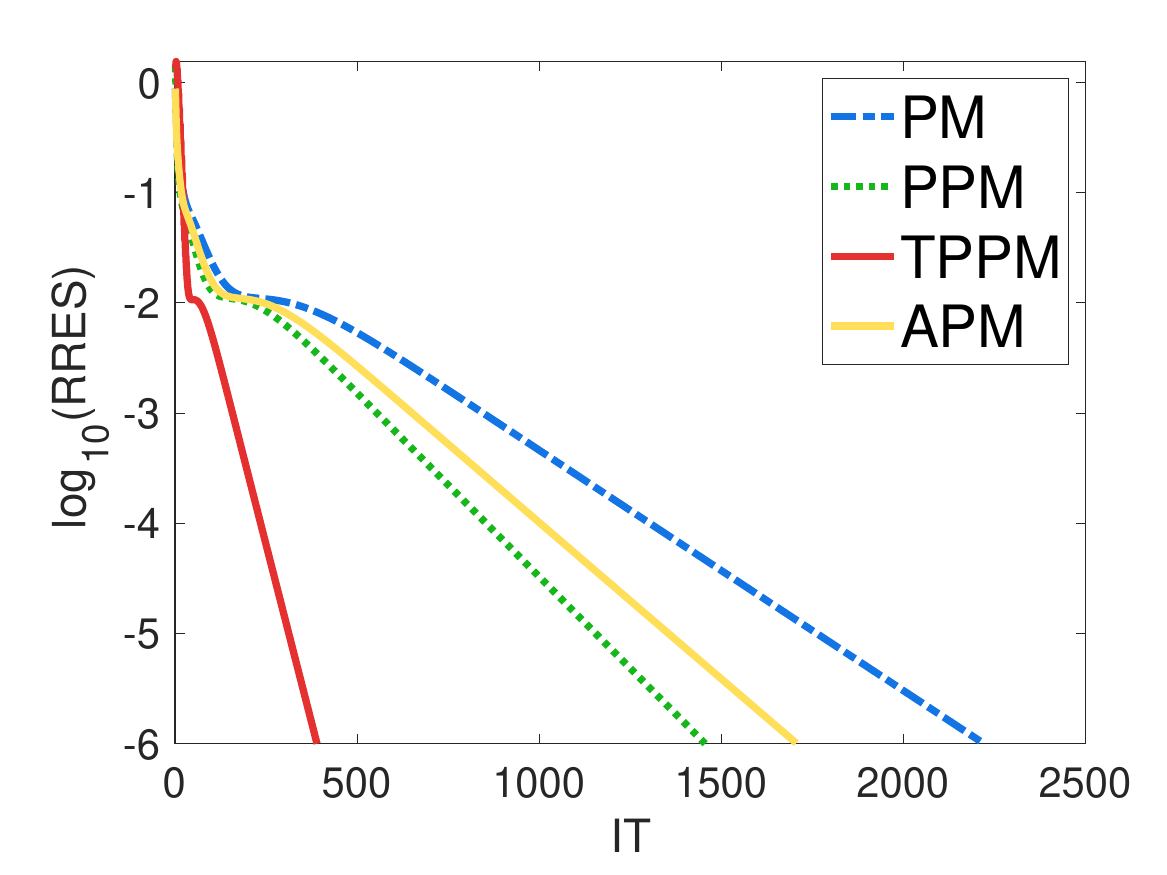}
		\subcaption*{ $\alpha=2;\quad n=5000$}
	\end{subfigure}
	\begin{subfigure}[t]{0.32\textwidth}
		\centering
		\includegraphics[width=1\textwidth,height=0.65\textwidth]{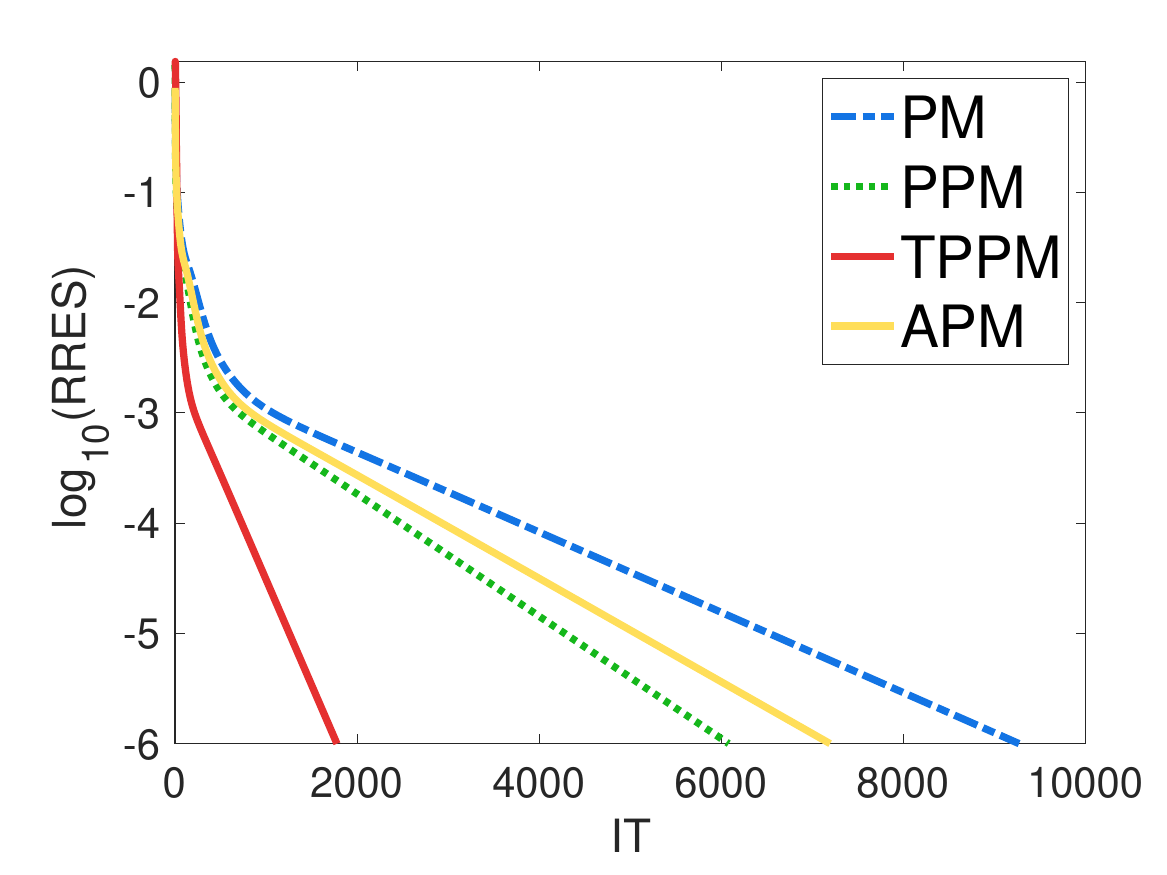}
		\subcaption*{ $\alpha=2;\quad n=10000$}
	\end{subfigure}
	\begin{subfigure}[t]{0.32\textwidth}
		\centering
		\includegraphics[width=1\textwidth,height=0.65\textwidth]{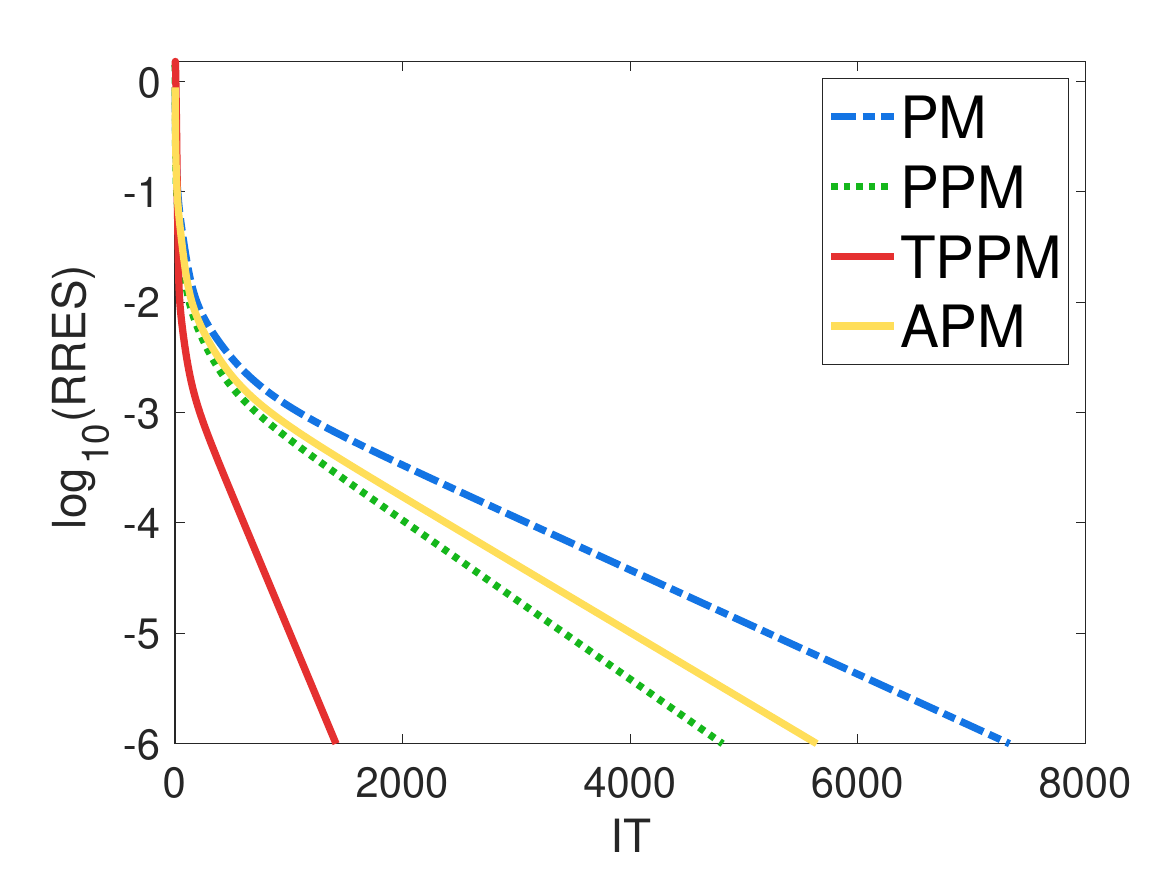}
		\subcaption*{ $\alpha=2;\quad n=15000$}
	\end{subfigure}

		\begin{subfigure}[t]{0.32\textwidth}
		\centering
		\includegraphics[width=1\textwidth,height=0.65\textwidth]{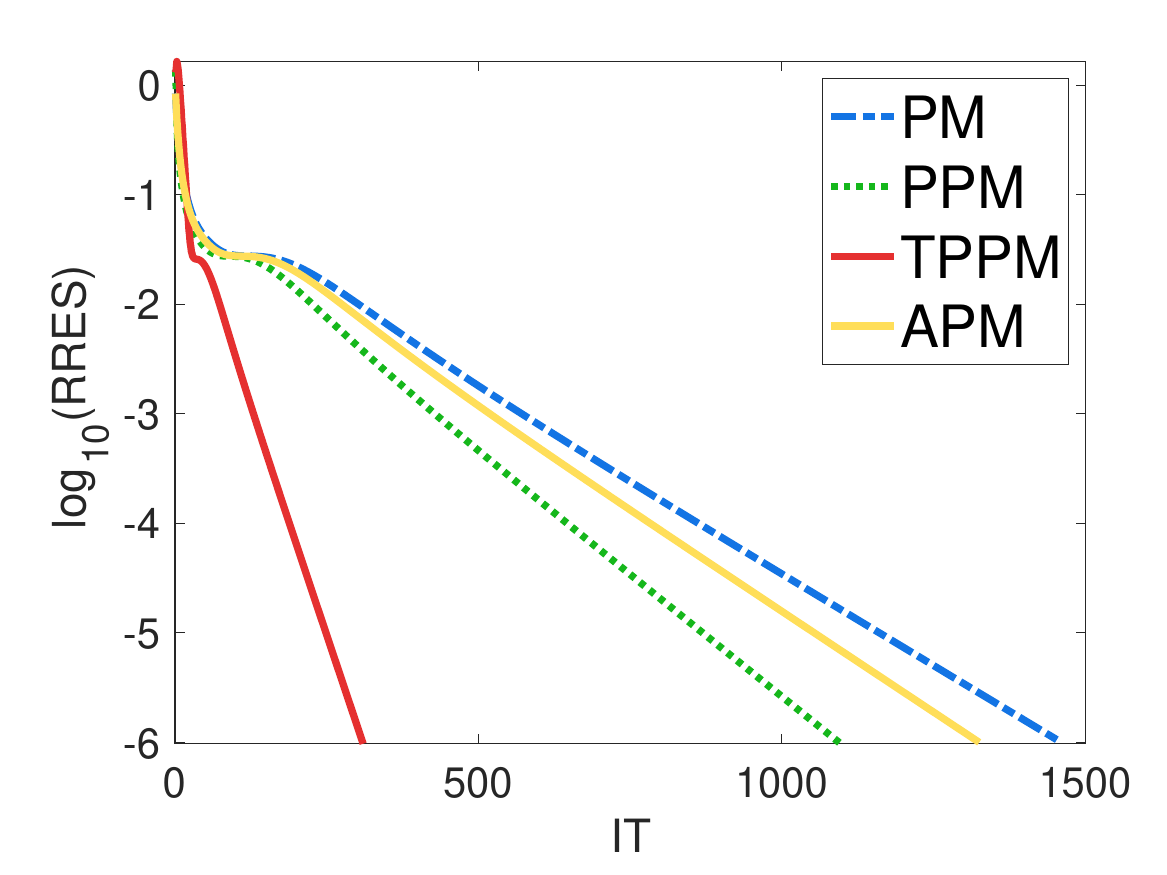}
		\subcaption*{ $\alpha=3;\quad n=5000$}
	\end{subfigure}
	\begin{subfigure}[t]{0.32\textwidth}
		\centering
		\includegraphics[width=1\textwidth,height=0.65\textwidth]{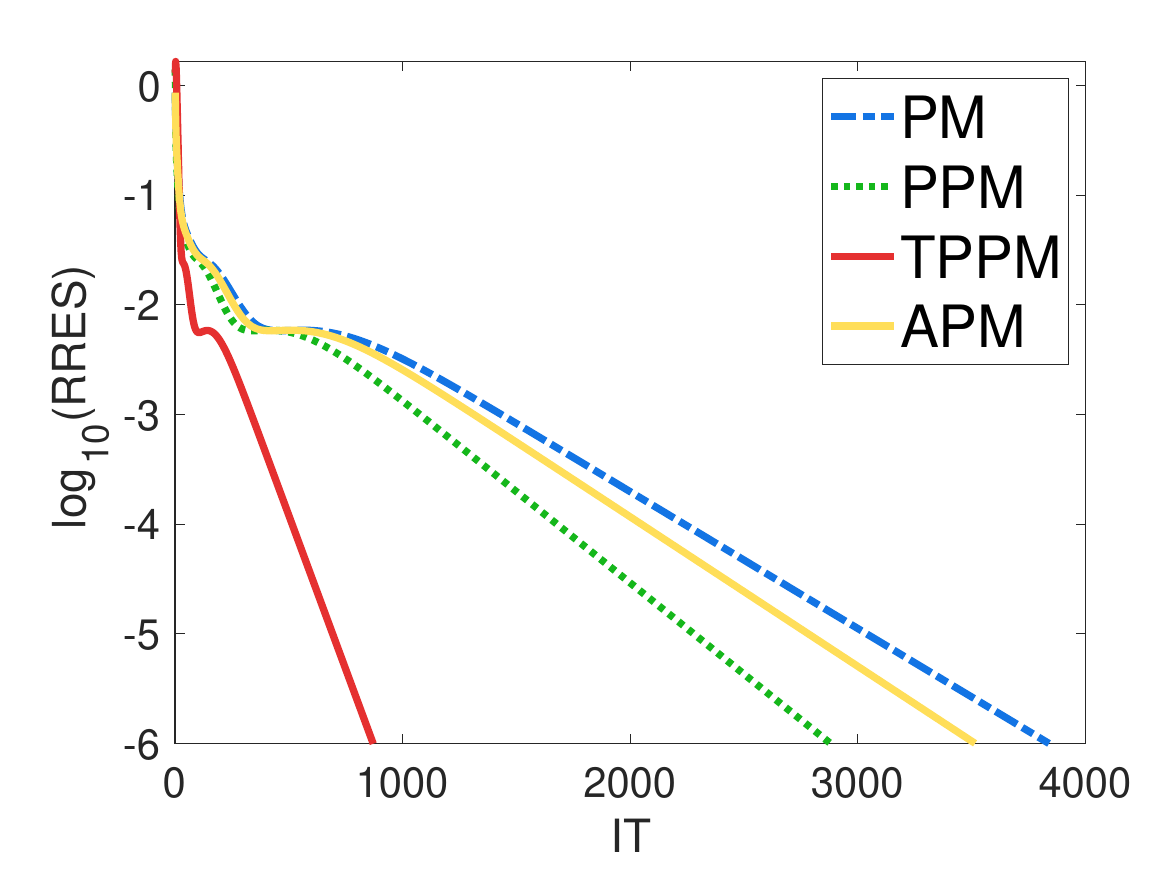}
		\subcaption*{ $\alpha=3;\quad n=10000$}
	\end{subfigure}
	\begin{subfigure}[t]{0.32\textwidth}
		\centering
		\includegraphics[width=1\textwidth,height=0.65\textwidth]{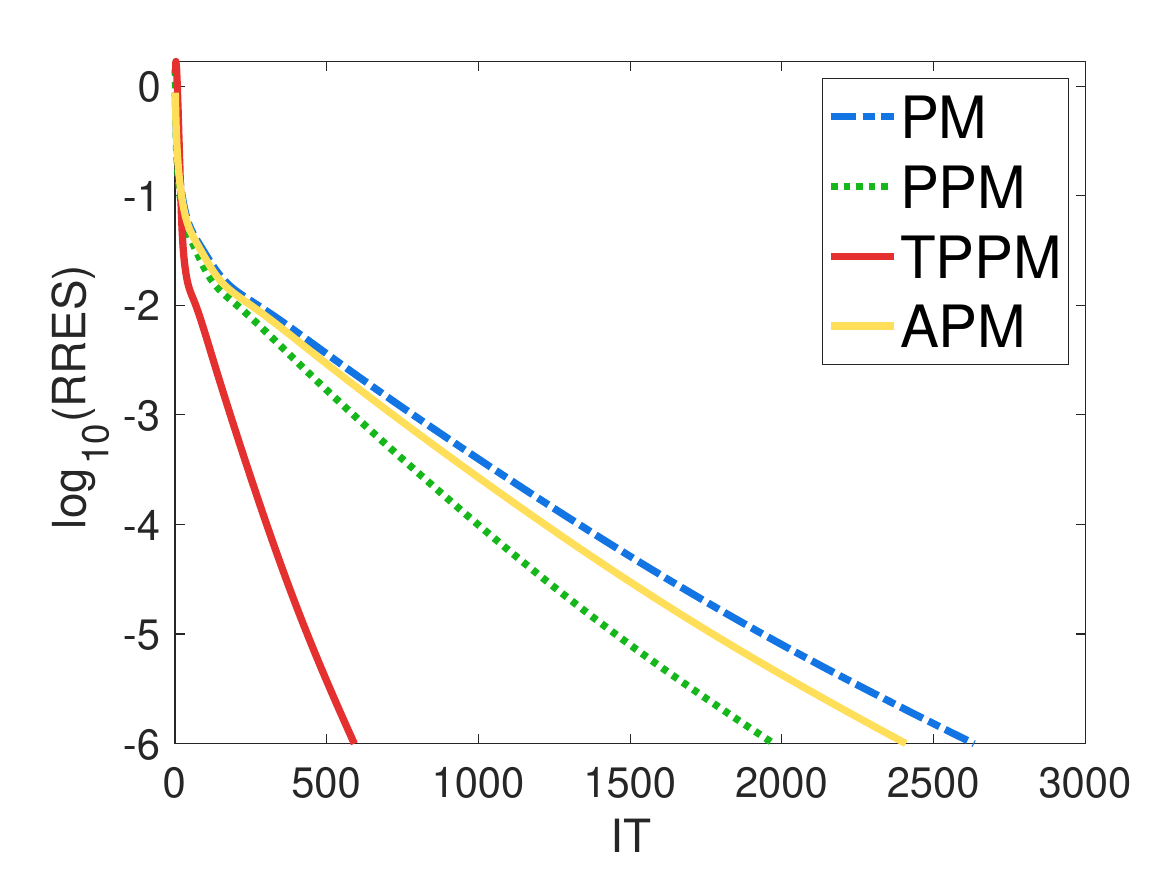}
		\subcaption*{ $\alpha=3;\quad n=15000$}
	\end{subfigure}
	
	\caption{ Marcenko-Pastur law, regression curve and $\log_{10}(\text{RRES})$ versus IT for the PM, PPM, APM, and TPPM.}	
	\label{Wishart_Figure}
\end{figure}

Table \ref{Wishart_Table} lists numerical results for computing the largest eigenpairs of the Wishart matrices.
The results demonstrate that the TPPM consistently outperforms the PM, PPM, and APM in terms of both IT and CPU. Specifically, the TPPM achieves a speed-up over the PM ranging from 4.4028 to 6.4168, over the PPM from 3.3224 to 3.8713, and over the APM from 3.6116 to 4.2746. These significant speed-ups show that the TPPM is still effective for matrices with non-uniformly distributed eigenvalues. Notably, for symmetric positive definite matrices, the TPPM exhibits faster convergence than the APM, which indicates the effectiveness of our proposed parameter selection strategy. These findings are further illustrated in Figure \ref{Wishart_Figure}, which visualizes the simulation of the Marčenko-Pastur law for varying $\alpha$. The figure includes parameter prediction curves with a 95\% confidence region and plots of $\log_{10}(\text{RRES})$ versus IT for different values of $\alpha$ and matrix size $n$.
These visualizations intuitively confirm the superior convergence behavior of the TPPM compared to other methods.

\section{Conclusions}\label{sec:5}
By considering the results of the previous step at each iteration, we propose a two-sweep parameterized power method for computing extreme eigenvalues and  their corresponding eigenvectors in real symmetric matrices. Additionally, we employ Gaussian Process Regression (GPR) for parameter prediction, with the goal of improving the overall efficiency of the algorithm.  Numerical results show that with quasi-optimal parameters, the TPPM surpasses both the PM and PPM in terms of computing efficiency. Besides, the TPPM can be effectively applied to clustering tasks.

There is still much work to be done on the proposed methods. For instance, we can consider a more general iteration format
$$
x_{k+1} = A\left[ (1-\alpha_k)x_{k-1} +\alpha_k x_k \right] + c_k \left( (1-\beta_k) x_{k-1} +\beta_k x_k \right).
$$
Furthermore, an additional dimension for exploration involves the incorporation of block techniques.

\section*{Funding}
The work was supported by the Natural Science Foundation of Guangdong Province (Grant Nos. 2025A1515011611 and 2024A1515011527), the National Natural Science Foundation of China (Grant Nos. 12671450, 12471351 and 12301483), and the Postgraduate Scientific Research Innovation Project of Hunan Province (CX20250937).

\section*{Declarations}

\noindent {\bf Conflict of interest.} The authors declare no potential conflict of interest relevant to this work.

\noindent{\bf Ethics approval. }
Not applicable. This study involves mathematical
analysis and numerical experiments and does not
involve human participants or animals.

\noindent{\bf Consent to participate. }
Not applicable.

\noindent{\bf Consent for publication. }
Not applicable.

\noindent\textbf{Data availability.}
The numerical test matrices were generated or
constructed as described in Section 5, and
the numerical results are reported in the manuscript.

\end{document}